\def\e{\varepsilon}
\def\N{{\mathbb N}}
\renewcommand{\to}{\rightarrow}
\numberwithin{equation}{section}
\theoremstyle{plain}%definition
\newtheorem{teor}{Theorem}[section]
\newtheorem{ese}[teor]{Example}
\newtheorem{prop}[teor]{Proposition}
\newtheorem{lem}[teor]{Lemma}
\newtheorem{cor}[teor]{Corollary}
\newcommand{\bdm}{\begin{displaymath}}
\newcommand{\edm}{\end{displaymath}}
\newcommand{\bpb}{\begin{prob}}
\newcommand{\epb}{\end{prob}}
\newcommand{\beq}{\begin{equation}}
\newcommand{\eeq}{\end{equation}}
\newcommand{\bem}{\begin{multline}}
\newcommand{\eem}{\end{multline}}
\newcommand{\bes}{\begin{ese}}
\newcommand{\ees}{\end{ese}}
\newcommand{\bde}{\begin{defi}}
\newcommand{\ede}{\end{defi}}
\newcommand{\bpr}{\begin{prop}}
\newcommand{\epr}{\end{prop}}
\newcommand{\ble}{\begin{lem}}
\newcommand{\ele}{\end{lem}}
\newcommand{\bte}{\begin{teor}}
\newcommand{\ete}{\end{teor}}
\newcommand{\bco}{\begin{cor}}
\newcommand{\eco}{\end{cor}}
\theoremstyle{definition}
\newtheorem{defi}[teor]{Definition}
\newtheorem{remark}[teor]{Remark}
\newcommand{\R}{\mathbb{R}}
\newcommand{\T}{\mathbb{T}}
\newcommand{\calO}{{\mathcal O}}
\newcommand{\calP}{{\mathcal P}}
\newcommand{\al}{\alpha}
\newcommand{\ka}{\kappa}
\newcommand{\s}{\sigma}
\newcommand{\del}{\partial}
\newcommand{\oo}{\omega}
\newcommand{{\resonance}}{relevant self-energy cluster }
\begin{document}

\title{\bf On the integrability of Degasperis-Procesi equation: \\
control of the Sobolev norms and Birkhoff resonances.
}
\date{}

\author{\bf 
Roberto Feola$^{**}$, Filippo Giuliani$^{\dag}$, Stefano Pasquali$^{\dag}$ 
\\
\small
${}^{**}$ SISSA, Trieste, rfeola@sissa.it; 
\\
\small
${}^\dag$ RomaTre, Roma, fgiuliani@mat.uniroma3.it, spasquali@mat.uniroma3.it\footnote{
This research was supported by PRIN 2015 ``Variational methods, with applications to problems in 
mathematical physics and geometry''.
This research was supported by PRIN 2012 ``Variational and perturbative aspects of nonlinear differential problems''.
This research was supported by by ERC grant 306414 HamPDEs under FP7.
}
}

\maketitle

%%%%%%%%%%%%%%%%%%%%%%%%%%%%%%%%%%%%%%%%%%%%%%%%%%%%%%%%%%%%%%%%%%%%%%%%%
%%%%%%%%%%%%%%%%%%%%%%%%%%%%%%%%%%%%%%%%%%%%%%%%%%%%%%%%%%%%%%%%%%%%%%%%%
\begin{abstract}
We consider the \emph{dispersive} Degasperis-Procesi equation $u_t-u_{x x t}-\mathtt{c} u_{xxx}+4 \mathtt{c} u_x-u u_{xxx}-3 u_x u_{xx}+4 u u_x=0$
with $\mathtt{c} \in \mathbb{R} \setminus \{0\}$. In \cite{Deg} the authors proved that this equation possesses infinitely many conserved quantities. We prove that there are infinitely many of such constants of motion which control the Sobolev norms and which are analytic in a neighborhood of the origin of the Sobolev space $H^s$ with $s \geq 2$, both on $\mathbb{R}$ and $\T$. By the analysis of these conserved quantities we deduce a result of global well-posedness for solutions with small initial data and we show that, on the circle, the \emph{formal} Birkhoff normal form of the Degasperis-Procesi at any order is action-preserving.

%
%
%BOZZA
%In the paper we consider the Degasperis-Procesi equation $u_t-u_{x x t}+u_{xxx}-4 u_x-u u_{xxx}-3 u_x u_{xx}+4 u u_x=0$
%with $x\in \T$. It is an example of integrable quasi-linear PDEs in the sense that there exists infinitely many constants of motion.
%Our aim is to study algebraic properties of such constants. As a consequence we provides two applications: 
%(i) we show that all the Birkhoff resonances are trivial, (ii) 
% we provide an existence result of solutions defined globally in time for any small enough Cauchy datum on the circle.
\end{abstract}
%%%%%%%%%%%%%%%%%%%%%%%%%%%%%%%%%%%%%%%%%%%%%%%%%%%%%%%%%%%%%%%%%%%%%%%%%
%%%%%%%%%%%%%%%%%%%%%%%%%%%%%%%%%%%%%%%%%%%%%%%%%%%%%%%%%%%%%%%%%%%%%%%%%

\tableofcontents

\section{Introduction}

In $1999$ Degasperis and Procesi \cite{DegPro} applied a method of asymptotic integrability to the family of third-order dispersive PDE conservation laws
\begin{equation}\label{DPfamiglia}
u_t+c_0 u_x+\gamma u_{xxx}-\alpha^2 u_{xx t}=(c_1 u^2+c_2 u_x^2+c_3 u u_{xx})_x,
\end{equation}
where $\alpha, c_0, c_1, c_2, c_3$ are real constants and subindices denote the partial derivatives.\\
Only three equations within this family result to satisfy the asymptotic integrability condition, the KdV equation ($\alpha=c_2=c_3=0$), the Camassa-Holm equation ($c_1=-3 c_3/2 \alpha^2, c_2=c_3/2$) and the Degaperis-Procesi equation
\begin{equation}\label{DPvera}
u_t+c_0 u_x+\gamma u_{xxx}-\alpha^2 u_{xx t}=-\frac{4 c_2}{\alpha^2} u u_x+ 3 c_2 u_x u_{xx}+ c_2 u u_{xxx}.
\end{equation}
In \cite{Deg} Degasperis-Holm-Hone showed that the equation \eqref{DPvera} is \emph{integrable} by constructing its Lax pair. They also proposed a bi-Hamiltonian structure and a recursive method to generate infinitely many constants of motion (see Section $4$ in \cite{Deg}).\\
Later Constantin and Lannes showed in \cite{ConstLannes} that the Degasperis-Procesi equation, as well as the Camassa-Holm equation, can be regarded as a model for nonlinear shallow water dynamics, and that it accomodates wave-breaking phenomena.
%Dullin, Gottwald and Holm \cite{Dullin} showed that the Degasperis-Procesi equation can be obtained from the shallow water elevation equation by an appropriate Kodama transformation.

\medskip

We observe that the equation \eqref{DPvera} is a quasi-linear PDE, namely the linear and the nonlinear terms contain the same order of derivatives. We also remark that \eqref{DPvera} is not translation invariant and the linear dispersion (see for instance \eqref{dispersionLaw} for the case $x\in\T$) is related to the chosen frame.\\
By taking $c_0=-\gamma$ and $\alpha^2=1$ in \eqref{DPvera} the linearized equation at $u=0$ transforms into the following transport equation
\begin{equation}
u_t=\gamma u_x.
\end{equation}
Hence, in this case, all the solutions are travelling waves and there are no dispersive effects. In particular, by choosing $c_0=\gamma=0$, $c_2=1$ and $\alpha^2=1$, the equation \eqref{DPvera} can be transformed into the \emph{dispersionless} form
\begin{equation}\label{dispersionless}
u_t-u_{x x t}+ 4 u u_x=3 u_x u_{xx}+u u_{xxx}.
\end{equation}

The family of equations \eqref{DPvera} is covariant under the group of transformations
 \begin{equation}\label{boosts}
u\mapsto \lambda u( t, \xi x+\eta t ) +\delta, \qquad \lambda, \xi, \eta, \delta\in\mathbb{R},
\end{equation}
which are compositions of translations and Galilean boosts, and 
all the parametrized equations in \eqref{DPvera} can be obtained from \eqref{dispersionless} by applying such changes of coordinates (see \cite{scolar}).
As we said above, in order to consider the dispersive effects of \eqref{DPvera} we have to impose that $c_0\neq -\gamma$ and $c_0, \gamma\neq 0$; if we let the coefficient in front of $u_{xx t}$ be $-1$, we can obtain an equation with this feature from \eqref{dispersionless} if and only if we apply a transformation of the form \eqref{boosts} with $\delta\neq 0$, namely we have to consider translations of the variable $u$.\\
We will consider the \emph{dispersive} Degasperis-Procesi equation
\begin{equation}\label{DP}
u_t-u_{x x t}-\mathtt{c} u_{xxx}+4 \mathtt{c} u_x- u u_{xxx}-3 u_x u_{xx}+4  u u_x=0,
\end{equation}
obtained from \eqref{dispersionless} by translating $u\mapsto u+\mathtt{c}$, for some real parameter $\mathtt{c}\neq 0$. We remark that the same equation can be obtained from \eqref{DPvera} by setting $
\alpha^2=1, \gamma=-\mathtt{c},  c_0=4 \mathtt{c}, c_2=c_3=1.
$

\smallskip

We note that the mass $\int u \,dx$ is a constant of motion of the Degasperis-Procesi equation \eqref{dispersionless}, hence the subsets of functions with fixed finite average
\begin{equation}\label{polpetta}
\mathfrak{L}_{\mathtt{c}}:=\{ u : \int u\,dx=\mathtt{c}\}
\end{equation}
are left invariant by the flow of \eqref{dispersionless}. On the subspace $\mathfrak{L}_0$ it is possible to define a non-degenerate symplectic structure for which \eqref{dispersionless} can be seen as a Hamiltonian PDE. Our purpose is to investigate the dynamics of \eqref{dispersionless} on the invariant subsets $\mathfrak{L}_{\mathtt{c}}$ with $\mathtt{c}\neq 0$, which is equivalent to the one of \eqref{DP} when $u \in \mathfrak{L}_0$.\\

The equation \eqref{dispersionless} has been widely investigated by many authors since it presents wave breaking phenomena, peakon and soliton solutions and blow-up scenarios (see for instance \cite{ConstantinEscher}, \cite{Escher}, \cite{CH}, \cite{stabpeak}, \cite{Matsuno}, \cite{Li}, \cite{Liu}, \cite{LiuYin}, \cite{Deg} and \cite{globbreak}). \\
Lundmark and Szmigielski \cite{Lund} presented an inverse scattering approach for computing n-peakon solutions to equation \eqref{dispersionless}. Vakhnenko and Parkes \cite{Vak} investigated traveling wave solutions. Holm and Staley \cite{Ho} studied stability of solitons and peakons numerically.\\
Regarding the well-posedness of the equation \eqref{dispersionless} we cite the results of Yin \cite{YinR}, \cite{YinT} on the local well-posedness with initial data $u_0\in H^s$, $s>3/2$, both on the line and on the circle, and the result by Himonas-Holliman \cite{HH}, who showed that for \eqref{dispersionless} the data-to-solution map is not uniformly continuous. We also mention the paper of Coclite-Karlsen \cite{Coclite} for the well-posedness in classes of discontinuous functions and Wu \cite{Wu} for the periodic generalized Degasperis-Procesi equation.

\medskip

Although the bi-Hamiltonian structure of equation \eqref{dispersionless} provides an infinite number of conservation laws (\cite{Deg}), there is no way to find conservation laws controlling the $H^1$-norm of $u \in \mathfrak{L}_0$ (\cite{Escher}), which for instance represents an important difference with respect to the Camassa-Holm equation. We want to show that if we consider the dynamics on $\mathfrak{L}_{\mathtt{c}}$ with $\mathtt{c}\neq 0$, then the situation, close to the origin, is truly different.

%Actually the equation \eqref{DP}, defined on the functions with zero spatial average, describes the dynamics of \eqref{dispersionless} on the invariant subset of the functions with spatial average $\mathtt{c}$ and it can be equipped with a Hamiltonian structure (see Section \ref{prelimi}).  

%Indeed we can consider as phase space
%\[
%\mathcal{P}:=\{ u\in H^1 : \int u\,dx=0\}
%\]
%and obtain \eqref{DP} from the equation \eqref{dispersionless} by translating $u\mapsto \mathtt{c}+u$, hence \eqref{DP} can be seen as the equation governing the dynamics on the invariant leaf of the functions with spatial average $\mathtt{c}$. 

\medskip

In this paper we construct infinitely many conserved quantities $K_n$ for the dispersive Degasperis-Procesi equation \eqref{DP}, starting from the ones proposed by Degasperis-Holm-Hone in \cite{Deg}.\\
We prove that the $K_n$'s are analytic functions in a ball centered at the origin of $H^{n+1}$ with radius depending on the parameter $\mathtt{c}$, but independent of $n$ (see \eqref{DP}), such that
\begin{itemize}
\item[$(F1)$] for any $s \geq 2$ there exists $n=n(s)\in\mathbb{N}$ such that $K_n$ controls the $H^s$-norm of solutions $u$ of \eqref{DP} with initial datum $u_0$ sufficiently close to $0$ in $H^s$,
\item[$(F2)$] the quadratic part of $K_n(u)$ is given by
\[
K_n^{(0)}=\int (\partial_x^{n-1} w)^2\,dx, \quad w:=u-u_{xx}
\]
and so, if $x\in\T$, it reads
\[
K_n^{(0)}=\sum_{j\in\mathbb{Z}} (1+j^2)^2 \,j^{2 (n-1)}\,\lvert u_j \rvert^2.
\]
\end{itemize}
These facts are actually proved in Theorem \ref{costantiMotoDP}, which is the main result of the paper. \\
We provide also two applications of this result. The first one is a result of global in time well-posedness and stability in a neighborhood of the origin. 

\begin{itemize}
\item[$(A1)$] Fixed $s \geq 2$, there exists a ball centered at the origin $B_s$ of $H^s$ in which the equation \eqref{DP} is \emph{ globally well-posed} (see \cite{Tao}), namely any solution with initial datum belonging to $B_s$ exists for all times and, moreover, it remains inside a slightly bigger ball centered at the origin (hence the origin is a stable fixed point).
\end{itemize}
This is the content of Theorem \ref{esistenzaglobale}, which in turn is due to $(F1)$. \\
The use of the conserved quantities to prove existence results for solutions of integrable PDE's is a classical argument. We mention for instance the celebrated result by Bona-Smith \cite{BonaSmith} on the initial value problem for the KdV equation.\\
We also mention \cite{Luca}, \cite{Visciglia}, \cite{Visciglia2}, in which the properties of  conserved quantities of other integrable PDE's, such as the DNLS and the Benjamin-Ono equation on $\T$, are used to construct functional Gibbs measures and study the long time existence of regular solutions. It would be interesting to investigate if similar results could be applied to equation \eqref{DP}. \\

%For small initial data this result generalizes the one obtained by Fang-Chen-Hu \cite{Fang}.

\medskip

Another application of Theorem \ref{costantiMotoDP} 
% $(F1)$-$(F2)$ 
concerns the Birkhoff normal form analysis for the Degasperis-Procesi equation on the circle. First we briefly recall some facts and literature about Birkhoff normal form.

\smallskip

The Birkhoff normal form theory for PDE's has been widely developed since the 1990s, in order to extend to the infinite dimensional dynamical case the classical results which hold for finite dimensional ``nearly'' integrable Hamiltonian systems.

%Let $(p,q)\in \mathbb{R}^{2n}$ and consider the Hamiltonian system
%\[
%\left\{\begin{aligned}
%&\dot{q}_{j}=\del_{p_{j}}H(p,q),\;\; j=1,\ldots,n,\\
%&\dot{p}=-\del_{q_{j}}H(p,q),\;\; j=1,\ldots,n,
%\end{aligned}\right.
%\]
%where $H : \mathbb{R}^{2n}\to \mathbb{R}$ has the form 
Let us consider a Hamiltonian function 
$H(p,q)=H=H^{(0)}+P$, with $(p,q)\in \R^{2n}$
with
\begin{equation}\label{ham}
H^{(0)}=\sum_{j=1}^{n}\oo_j\frac{p_j^{2}+q_{j}^{2}}{2},
\end{equation}
and $P$ is a smooth function with a zero of order at least $3$ at the origin. The origin is an elliptic fixed point and the Hamiltonian system can be seen as a system of harmonic oscillators with frequencies $\omega_j$ coupled by the nonlinearities.
Classical theory guarantees that, for any $N\geq1$, there is a real analytic and symplectic map  $\Phi_{N}$ 
such that 
\[
H\circ{\Phi_{N}}=H^{(0)}+{Z}_{N}+{R}_{N},
\]
where ${R}_{N}$ is a function with a zero of order $N+3$ and ${Z}_{N}$ is a polynomial of degree $N+2$
which Poisson-commutes with $H^{(0)}$. In particular, if the frequencies $\oo_{j}$ are ``non resonant'', namely
 \begin{equation}\label{reso}
\sum_{j=1}^{n}\oo_{j}k_{j}\neq 0, \quad \forall\; k\in \mathbb{Z}^{n}\setminus\{0\},
 \end{equation}
then ${Z}_{N}$ depends only on the actions $I_{j}=(p_{j}^{2}+q_{j}^{2})/2$.
This implies that
if the initial datum has size $\e\ll1$ then the solution remains in a neighborhood of radius $2\e$
for times of order $\e^{-(N+1)}$.  The key idea to obtain such a result is to remove from the nonlinearity $P$ all the monomials 
which do not commutes with $H^{(0)}$. This can be done iteratively by means of symplectic transformations. More precisely,
one uses a sequence of maps $\Phi_{p}$ generated as the Hamiltonian flow at time one of an auxiliary Hamiltonian $F_{p}$ of degree
of homogeneity $p+2$. The $F_{p}$ is chosen in such a way the 
 equation 
 \begin{equation}\label{homo}
 \{H^{(0)},F_p\}+G_{p}=Z
 \end{equation}
 where $G_p$ is some homogeneous Hamiltonian of degree $p+2$ and $\{H^{(0)},Z\}=0$, is satisfied.
 It turns out that, in order to solve it , one needs some non-resonance conditions on the frequencies $\oo_{j}$, for instance the relation \eqref{reso}.

In the case $n<\infty$ the number of monomials which have to be canceled out is finite. 
Many PDE's (NLS, KdV, Klein-Gordon...) $u_t=L(u)+f(u)$, with $L$ possibly an unbounded linear operator and $f$ some nonlinear function, 
can be written, on compact manifolds, as Hamiltonian systems whose quadratic part has a form similar to \eqref{ham} with $n=\infty$. \\
There are several difficulties in extending the theory to the infinite dimensional case:

\begin{enumerate}
\item[(i)] one needs suitable \emph{non-resonance conditions} which replace \eqref{reso} in infinite dimension;
\item[(ii)] one needs to cancel out an \emph{infinite} number of monomials;
\item[(iii)] one needs to check that
the normal form $Z_{N}$ is \emph{action-preserving};
\item[(iv)] the Birkhoff transformations are flows of possibly ill-posed PDEs.
\end{enumerate}

The first difficulty  has been overcome  in \cite{Bambu} and in \cite{BG}.
To deal with the second one, a good definition of the class of formal polynomials on which one works is required. 
For instance, one at least needs that the Hamiltonian $F_p$ are continuos functions on the phase space. 
In the framework of Hamiltonian PDEs typical phase spaces are
Sobolev spaces of functions (or weighted spaces of sequences). We mention \cite{DelortSzeft2} and \cite{DelortSzeft1}, in which the authors introduce a suitable classes of multilinear forms. \\
Concerning item $(iii)$, we say that the normal form $Z_k$ is \textbf{action-preserving} if it depends only on the actions $\lvert u_j \rvert^2$.  This implies that the flow generated by the Hamiltonian function $Z_{k}$ leave the actions invariant.
In the PDE context this means that the Sobolev norms $H^{s}$ are preserved for any $s>0$. Proving that the obtained normal form is action-preserving is a problem concerning the specific equation one is studying. \\
Regarding item $(iv)$, if the nonlinearity depends upon some derivatives the Birkhoff transformations could be not well-defined. Especially for quasi-linear PDEs, the problem of constructing rigorous symplectic transformations in order to apply a Birkhoff normal form procedure is delicate.\\
The Birkhoff normal form methods have been used by many authors to prove long time existence of solutions with data in a small neighborhood of a fixed point. 
We quote for instance the papers by Delort  \cite{Delort-2009}, \cite{Delort-sphere}, in which the author studied  
 quasi-linear perturbations of Klein-Gordon (K-G)
and the recent paper by Berti-Delort  \cite{BertiDelort}, where the authors 
applied a suitable Birkhoff normal form procedure to the capillarity-gravity water waves (WW) equation. 

In the latter papers  the linear frequencies depends
on some  ``external'' parameters (the ``mass'' for the K-G, the capillarity for the WW). This fact has been used 
in order to prove very strong non-resonance conditions (which hold for ``most'' values of parameters) 
and, as a consequence, that  the normal forms are action-preserving.\\
When the equation does not depend on physical parameters, the problem of showing the integrability of the normal form relies on an analysis of the algebraic structure of the resonances.
We mention the paper by Craig-Worfolk
\cite{CraigBF} in which the authors study (at a formal level) 
the Birkhoff normal form for 
pure-gravity water waves at order four. It is known that for this equation 
there are non trivial resonances (called ``Benjamin-Feir''), 
which could prevent the normal forms to be  action-preserving.
Actually, they show that 
there are suitable cancellations in the coefficients of the Hamiltonian which allows to obtain 
an action preserving normal form.\\
Our result focuses only on $(iii)$, by formulating the Birkhoff normal form procedure only at a formal level, and it is similar to the study performed in \cite{CraigBF}, since we do not deal with $(i)$ , $(ii)$ and $(iv)$ (which concern convergence problem).\\
The main differences are that:
\begin{itemize}
\item we use the integrability of the equation in order to overcome the problem of non trivial resonances;
\item we are able to prove the integrability of the normal form at any order, since we exploit the algebraic structure of the resonances.
\end{itemize} 
More precisely we prove the following: 
\begin{itemize}
\item[$(A2)$] 
at a purely formal level, it is possible to put the Hamiltonian \eqref{DPHamiltonian} in a action-preserving Birkhoff normal form at any order. 
\end{itemize}
This result is achieved thanks to $(F2)$ and it is the content of Theorem \ref{teoBirk}.
%\ref{BirkhoffFormN}. 

\vspace{0.4em}
One of the main applications of the Birkhoff normal form methods concerns the KAM theory for PDEs. It is well known that in order to apply perturbative arguments to construct periodic and quasi-periodic solutions for perturbed autonomous integrable equations one needs to control the frequencies of the expected invariant tori. In the infinite dimensional context, this requires the presence of parameters which modulate the frequencies, since the non-resonance conditions to be imposed are quite complicated. When the considered system does not present external parameters, one has to extract them from the equation itself: a way to do that is to perform a Birkhoff normal form procedure. Actually, the result presented in Theorem \ref{teoBirk} has been motivated by the study of quasi-periodic solutions for perturbations of the Degasperis-Procesi equation \cite{FGP}.

\subsection{Preliminaries}\label{prelimi}

In order to state the main result of the paper we introduce the Hamiltonian setting and the space of formal polynomials and power series. When there is no specification of the spatial domain we mean that the arguments hold for both cases $x\in \T$ or $x\in\mathbb{R}$.

\paragraph{Hamiltonian setting.}
The equation \eqref{DP} can be formulated as a Hamiltonian PDE $u_t=J\,\nabla_{L^2} H(u)$, where $\nabla_{L^2} H$ is the $L^2$ gradient of the Hamiltonian
\begin{equation}\label{DPHamiltonian}
H(u)=\int \frac{\mathtt{c}\,u^2}{2}-\frac{u^3}{6}\, dx.
\end{equation}
The Hamiltonian \eqref{DPHamiltonian} is defined on the real phase space (recall \eqref{polpetta})
\begin{equation*}
H_0^1:=H^1\cap \mathfrak{L}_0
\end{equation*}
endowed with the non-degenerate symplectic form
\begin{equation}\label{SymplecticFormDP}
\Omega(u, v):=\int (J^{-1} u)\,v\,dx, \quad \forall u, v\in H_0^1, \qquad J:=(1-\partial_{xx})^{-1}(4-\partial_{xx})\partial_x.
\end{equation}

%\begin{remark}
%Note that the equation \eqref{DP} can be written as a conservation law, namely as
%\begin{equation}
%u_t=(G(u))_x
%\end{equation}
%for some function $G$ and so the mass $\int_{\T} u\,dx$ is a conserved quantity and the phase space $H_0^1(\T_x)$ is invariant.
%\end{remark}

%(as defined in \cite{Airy}). \\%For further notions about the Hamiltonian formalism of KdV we refer to section $2.1$ of [\cite{\cite{\cite{KdVAut}}}].
The Poisson bracket induced by $\Omega$ between two functions $F, G\colon H_0^1 \to \mathbb{R}$ is
\begin{equation}\label{PoissonBracketDP}
\{ F(u), G(u) \}:=\Omega(X_F, X_G)=\int \nabla F(u)\,J \nabla G(u)\,dx,
\end{equation}
where $X_F$ and $X_G$ are the vector fields associated to the Hamiltonians $F$ and $G$, respectively.\\
On the circle the \textit{dispersion law} of the Degasperis-Procesi equation is given by 
\begin{equation}\label{dispersionLaw}
j\mapsto\omega(j):=j\,\frac{4+j^2}{1+j^2}=j+\frac{3 j}{1+j^2}, \qquad j\in\mathbb{Z}\setminus\{0\},
\end{equation}
where $\omega(j)$ are the \emph{linear frequencies of oscillations} or the eigenvalues of the operator $J$ on the circle (see \eqref{SymplecticFormDP}).

\vspace{0.7em}

Let us define
 \begin{equation}\label{emme}
 w:=(1-\del_{xx})u, \quad m:=\mathtt{c}+u-u_{xx},\quad p=-m^{\frac{1}{3}}.
 \end{equation}
One can easily check that
the functions 
%devo ricontrollare M_1
 \begin{equation}\label{nonleho}
 \begin{aligned}
 &H(u)=\int \frac{\mathtt{c}\,u^2}{2}-\frac{u^3}{6}\, dx,\quad M_0(u)=\frac{1}{2}\int (J^{-1}u_x)u dx,\quad M_1(u)=\int m^{\frac{1}{3}}dx,
 \end{aligned}
 \end{equation}
are constant of motions for equation \eqref{DP}, i.e. if $u(t,x)$ solves \eqref{DP} then
\begin{equation}\label{pissonCOMMU}
\frac{d}{dt}M_0(u)=\{ M_0, H\}(u)=0, \quad \frac{d}{dt}M_1(u)=\{ M_1, H\}(u)=0.
\end{equation}
%By \cite{Deg} we know that there exist infinitely many constants of motion
%\[
%K_{n} : H^{s}\to \R,\quad n\geq1,
%\]
%where 
We will consider the Sobolev spaces
\begin{equation}\label{spazio}
H^{s}(\mathbb{T};\mathbb{R}):=
\big\{ u(x)\in H^1_0(\mathbb{T};\mathbb{R}) : \|u\|_{H^{s}}^2:=\sum_{j\in\mathbb{Z} \setminus \{0\} } \lvert u_{j} \rvert^2 \langle j \rangle^{2 s}<\infty, \,\,\overline{u}_j=u_{-j} \big\}
\end{equation}
where $\langle j\rangle:=\sqrt{1+j^{2}}$ and
\begin{equation}\label{spazioR}
H^{s}(\mathbb{R};\mathbb{R}):=
\big\{ u(x)\in H^1_0(\mathbb{R};\mathbb{R}) : \|u\|_{H^{s}}^2:=\sum_{k=0}^s \int_{\mathbb{R}} (\partial_x^k u)^2\,dx \big\}.
\end{equation}
We will denote both the spaces \eqref{spazio} and \eqref{spazioR} with $H^s$ in Section \ref{costantiMotoPROOF}, since all the arguments hold independently by the $x$-space.
We denote by
\begin{equation}
\lvert u \rvert_{L^{\infty}}:=\sup_x \lvert u(x) \rvert
\end{equation}
the $L^{\infty}$-norm either on $\mathbb{R}$ or on $\mathbb{T}$.\\
Given a Banach space $(E, \lVert \cdot \rVert_E)$ and $r\geq 0$, we denote by
\[
B_{E}(v, r):=\{ u\in E : \lVert u-v \rVert_{E}< r  \}
\]
the open ball centered at $v\in E$ with radius $r$.
\paragraph{Space of polynomials.}
When $x \in \mathbb{T}$ it is convenient to introduce a class of polynomials which describes the Hamiltonians in terms of their Fourier coefficients. These definitions will be used in Section \ref{proofteoBirk}.\\ 
We use the multi-index notation $\alpha\in \mathbb{N}^{\mathbb{Z}}$, $\lvert \alpha \rvert:=\sum_j \alpha_j$.
We define
\begin{itemize}
\item the monomial associated to $\alpha$: $u^{\alpha}:=\prod_j u_j^{\alpha_j}.$
\item the momentum associated to $\alpha$: $\mathcal{M}(\alpha)=\sum_j j\, \alpha_j$.
\item the divisor associated to $\alpha$ (recall the linear frequencies \eqref{dispersionLaw}): $\Omega(\alpha):=\sum_j \omega(j)\alpha_j$.
\end{itemize} 
We define the set of indices with zero momentum of order $n\in\mathbb{N}$
\begin{equation}
\mathcal{I}_n:=\{ \alpha\in\mathbb{N}^{\mathbb{Z}} : \lvert \alpha \rvert=n+2, \,\,\mathcal{M}(\alpha)=0 \}.
\end{equation}

\begin{defi}\label{defpolinomi}
We say that $P:= \big( P_{\alpha} \big)_{\alpha\in\mathcal{I}_n}$, $P_{\alpha}\in\mathbb{C}$ for any $\alpha\in\mathcal{I}_n$, is a \emph{formal homogenous polynomial of degree $n+2$} and we write the (formal) expression $P(u)=\sum_{\alpha\in\mathcal{I}_n} P_{\alpha} u^{\alpha}$.\\
We call $\mathscr{P}^{(n)}$ the space of the formal homogenous polynomial of degree $n$.
\end{defi}
\begin{defi}\label{defserie}
We define the space product
\[
\mathscr{F}:=\prod_{n\geq 0}\mathscr{P}^{(n)}.
\]
If $P\in \mathscr{F}$ then we write the (formal) expression $P=\sum_{n=0}^{\infty} P^{(n)}$ where $P^{(n)}\in \mathscr{P}^{(n)}$.
\end{defi}
There exists a obvious inclusion of $\mathscr{P}^{(n)}$ into $\mathscr{F}$ given by
\[
(P_{\alpha})_{\alpha\in\mathcal{I}_n} \mapsto (\underbrace{0, \dots, 0}_{(n-1) \mbox{times}}, (P_{\alpha})_{\alpha\in\mathcal{I}_n}, 0, \dots)
\]
and we denote by $\Pi^{(n)}\colon \mathscr{F}\to \mathscr{P}^{(n)}$ the projection
\[
(\dots, (P_{\alpha})_{\alpha\in\mathcal{I}_n}, \dots)\mapsto (P_{\alpha})_{\alpha\in\mathcal{I}_n}.
\]

\smallskip

We call $\mathscr{P}^{(\le n)}:=\prod_{k=0}^n \mathscr{P}^{(k)}$ the space of the formal polynomials of degree (at most) $n+2$. As above, $\mathscr{P}^{(\le n)}$ can be embedded into the space of formal power series $\mathscr{F}$. We denote by $\Pi^{(\le n)}$ the projection of $\mathscr{F}$ onto $\mathscr{P}^{(\le n)}$.\\
We define $\mathscr{F}^{\geq n}:=\prod_{k\geq n}\mathscr{P}^{(k)}$. We denote by $\Pi^{(\geq n)}$ the projection of $\mathscr{F}$ onto $\mathscr{P}^{(\geq n)}$.

%Then we say that $P$ is
%\begin{itemize}
%\item a homogenous formal polynomial of degree $n$ if $P(u)=\sum_{\alpha\in\mathcal{I}_n} P_{\alpha}\,u^{\alpha}$ for some family of complex numbers $\big( P_{\alpha} \big)_{\alpha\in\mathcal{I}_n}$ indicized by $\mathcal{I}_n$.  We say that $P\in\mathcal{P}_n^{hom}$.
%\item A formal polynomial of order $n$ if $P$ is the finite sum of homogenous formal polynomial of degree less or equal to $n$. We say that $P\in\mathcal{P}_n$.
%\item A formal power series if $P(u)=\sum_{n=2}^{\infty} P^{(n)}(u)$ with $P^{(n)}$ a formal homogenous polynomial of degree $n$.  We say that $P\in\mathcal{P}_{\infty}$.
%\end{itemize}
%We remark that a formal polynomial is not an analytic function.\\
%We can clearly identify the space $\mathcal{P}_n^{hom}$ with the space of family of complex numbers indicized by $\mathcal{I}_n$ and
%the space of formal power series with the space of sequences of families of complex numbers $\big(( P_{\alpha} )_{\alpha\in \mathcal{I}_n} \big)_{n\geq 2}$.\\
%With this identification we can see $\mathcal{P}_{\infty}$ as the space product
%\[
%\mathcal{P}_{\infty}=\prod_{n\geq 2}\mathcal{P}_n^{\hom}.
%\]
%We call $\Pi^{(\le k)}$, $\Pi^{(k)}$ and $\Pi^{(\geq k)}$ respectively the projectors of the space $\mathcal{P}_{\infty}$ onto 
%\[
%\prod_{n=2}^k\mathcal{P}_n^{\hom}\times\prod_{n>k} \{ 0\}, \quad \prod_{2\le n<k} \{ 0\}\times\mathcal{P}_k^{\hom}\times\prod_{n>k} \{ 0\}, \quad\prod_{2\le n<k} \{ 0\}\times\prod_{n\geq k}^k\mathcal{P}_n^{\hom}.
%\]
%\paragraph{Notation.}
In particular, if $G$ is a formal power series we write
\[
\Pi^{(n)} G=G^{(n)}, \quad \Pi^{(\le n)} G=G^{(\le n)}, \quad \Pi^{(\geq n)} G=G^{(\geq n)}.
\]

\paragraph{Birkhoff resonances.}
Now we introduce the notion of Birkhoff resonances.

\begin{defi}\label{defrisonanza}
We say that $\alpha\in \mathcal{I}_n$ is \textbf{resonant} if its associated divisor $\Omega(\alpha)=0$ and we write $\alpha\in\mathcal{N}_n$.\\
We say that $\alpha$ is \textbf{trivially resonant} if $\alpha_j=\alpha_{-j}$ for all $j$ and we write $\alpha\in\mathcal{N}^*_n$.
By the fact that $\omega(-j)=-\omega(j)$, if $\alpha$ is trivially resonant then it is resonant and its associated monomials depend only on the \textbf{actions} $I_j:=\lvert u_j \rvert^2=u_j u_{-j}$ %(suppose w.l.o.g. that $u$ is fourier supported on $\{j_1, \dots, j_{n/2}\}\cup (-\{j_1, \dots, j_{n/2}\})$)
\[
u^{\alpha}=\prod_{j>0} (\lvert u_{j} \rvert^2)^{\alpha_{j}}=\prod_{j>0} I_{j}^{\alpha_{j}}.
\]
We say that a polynomial which depends only upon the actions $I_j$ is \textbf{action-preserving}.
\end{defi}
\subsection{Main result and applications}

The main result of the paper is the following.

\begin{teor}[{\bf Constants of Motion}]\label{costantiMotoDP}
Let $\mathtt{c}\in\mathbb{R}\setminus\{ 0\}$. For any $n\geq1$ there exist a decreasing sequence of positive numbers $(r_n)_{n \geq 1}$, and a sequence of functions $K_{n}: H^{n+1} \to \R$ with the following properties:

\begin{itemize}

\item[(0)] {\bf Involution:} if we set $K_0:=H$ (see \eqref{DPHamiltonian}) then for any $n\geq0$ one has that 
\begin{equation}\label{commutano}
\{H(u),K_{n}(u)\}=0.
\end{equation}

\item[(i)] {\bf Analyticity:} the function $K_n$ is analytic on $B_{H^{n+1}}(0, \lvert \mathtt{c} \rvert /2)$; more precisely, there exists a function $\Psi_n\colon\mathbb{C}^{n+2}\to \mathbb{C}$ analytic on $B_{\mathbb{C}}(0,  \lvert \mathtt{c} \rvert)\times\dots\times B_{\mathbb{C}}(0,  \lvert \mathtt{c} \rvert)$ such that
\[
K_n(u)=\int \Psi_n(u, u_x, \dots, \partial_x^{n+1} u)\,dx.
\]
Moreover, $\Psi_n$ admits the following Taylor expansion
\begin{equation}\label{formaaffine}
 \Psi_n(u, u_x, \dots, \partial_x^{n+1} u)=\sum_{k\geq 0}\,\, \sum_{\substack{\alpha\in \mathbb{N}^{\{0, \dots, n+1\}}, \\   \sum \alpha_i=k,\\ \sum i\alpha_i \le n+1  } } \Psi_{\alpha} \,\, u^{\alpha_0}( \partial_x u)^{\alpha_1}\dots ( \partial_x^{n+1} u)^{\alpha_{n+1}}, \qquad \Psi_{\alpha}\in \mathbb{C}.
\end{equation}
\item[(ii)] {\bf Characterization of quadratic parts:} the Taylor polynomial of order $2$ of $K_n$ at $u=0$ has the form
\begin{equation}\label{formacostanti}
\begin{aligned}
&K_n^{(0)}(u)=\int_{\mathbb{R}} (\partial_x^{n-1} (u-u_{xx}))^2\,dx \quad x\in\mathbb{R}, \\
& K_{n}^{(0)}(u)=
\sum_{j\in \mathbb{Z}\backslash\{0\}}|j|^{2(n-1)}(1+j^{2})^2|u_{j}|^{2} \quad x\in\T; %\qquad
%K_{n}^{(\geq3)}(u):=\sum_{p=3}^{\infty}\sum_{j_1,\ldots,j_{p}\in \mathbb{Z}}C_{j_1,\ldots,j_p}u_{j_1}\cdots u_{j_p},
\end{aligned}
\end{equation}
%for some constants $C_{j_1,\ldots,j_p}\in \C$;

\item[(iii)] {\bf Control of Sobolev norms:} there exist positive constants $C=C(n, \mathtt{c})$ and $\tilde{c}=\tilde{c}(n, \mathtt{c})$ such that for any $n\geq1$
\begin{equation}\label{equivalenzaNorma}
|K_{n}^{(0)}(u)|\leq \|u\|^2_{H^{n+1}}\leq \|u\|^2_{L^{2}} 
%\sum_{j\in \mathbb{Z}}|u_j|^{2}
+\tilde{c} |K_{n}^{(0)}(u)| \quad \forall u\in B_{H^{n+1}}(0,  \lvert \mathtt{c} \rvert/2)
\end{equation}
and  
\begin{equation}\label{stimecostanti}
|K_{n}^{(\geq1)}(u)|\leq C\|u\|^{3}_{H^{n+1}}  \quad \forall u\in  B_{H^{n+1}}(0, r_n).
\end{equation}

\end{itemize}

\end{teor}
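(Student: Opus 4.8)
\emph{Overall strategy.} The plan is to build the $K_n$ from the hierarchy of conserved quantities of Degasperis--Holm--Hone \cite{Deg}, using that after the translation $u\mapsto u+\mathtt{c}$ linking \eqref{dispersionless} to \eqref{DP} every such conserved quantity becomes a functional of the shifted momentum $m=\mathtt{c}+u-u_{xx}$ and of the variable $w=u-u_{xx}=m-\mathtt{c}$ from \eqref{emme}. I would first reduce the four claims to two structural facts: (a) each $K_n$ is the integral of an analytic density built from the fractional power $m^{1/3}$ (equivalently $p=-m^{1/3}$) and its $x$-derivatives up to order $n-1$; and (b) its quadratic part at $u=0$ is exactly $\int(\partial_x^{n-1}w)^2\,dx$. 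Granting (a) and (b), the involution \eqref{commutano} is the infinitesimal form of the fact that the $K_n$ are constants of motion, while \eqref{equivalenzaNorma} and \eqref{stimecostanti} follow from (b) and (a) respectively.

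\emph{Construction and analyticity, giving (i) and \eqref{formaaffine}.} Starting from the recursion of Section~4 in \cite{Deg} (or, equivalently, from the asymptotic expansion of the Lax pair of \eqref{DP}), I would generate a sequence of conserved densities and prove by induction that the $n$-th one is a finite sum of monomials in $m^{1/3},\partial_x m^{1/3},\dots,\partial_x^{n-1}m^{1/3}$, the top derivative bringing in $\partial_x^{n+1}u$ through $u_{xx}$. Since $m=\mathtt{c}+w$ and $\zeta\mapsto(\mathtt{c}+\zeta)^{1/3}$ is analytic on $\{|\zeta|<|\mathtt{c}|\}$, every such density is an analytic function $\Psi_n$ of the pointwise values $(u,u_x,\dots,\partial_x^{n+1}u)$ on the stated polydisc, and expanding in powers of $w$ produces precisely the derivative-weight structure $\sum i\,\alpha_i\le n+1$ recorded in \eqref{formaaffine}, this weight being preserved at each step of the recursion. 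The delicate point is to keep the radius of analyticity \emph{independent of $n$}: differentiating $m^{1/3}$ repeatedly accumulates inverse powers of $\mathtt{c}$, and one must verify that these are controlled solely by the branch point of $(\mathtt{c}+\zeta)^{1/3}$ at $\zeta=-\mathtt{c}$ and not by the order $n$; this is what yields the uniform radius $|\mathtt{c}|/2$ in $H^{n+1}$.

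\emph{Quadratic part (ii) and norm control (iii).} To pin down the quadratic part I would linearise the construction at $u=0$, i.e. at $m=\mathtt{c}$, where the bi-Hamiltonian operators of \cite{Deg} degenerate to constant-coefficient Fourier multipliers. Then each $K_n^{(0)}$ is a multiplier quadratic form, and the recursion acts on the associated $w$-symbols by multiplication by a fixed rational function of $j$ which one computes to be $j^2$. Starting from the base case $K_1^{(0)}=\int w^2\,dx$ --- which is, up to the normalising constant $-9\,\mathtt{c}^{5/3}$, the quadratic part of $M_1=\int m^{1/3}\,dx$ in \eqref{nonleho} --- induction gives the $w$-symbol $|j|^{2(n-1)}$, hence the $u$-symbol $(1+j^2)^2|j|^{2(n-1)}$, which is \eqref{formacostanti}; I would fix the normalisation of $K_n$ so that this leading coefficient equals $1$. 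Estimate \eqref{equivalenzaNorma} is then a comparison of symbols: $(j^2)^{n-1}\le(1+j^2)^{n-1}$ gives $K_n^{(0)}\le\|u\|_{H^{n+1}}^2$, while $(1+j^2)\le 2j^2$ for $j\neq0$ gives $(1+j^2)^{n+1}\le 2^{n-1}(1+j^2)^2|j|^{2(n-1)}$, hence the upper bound with $\tilde c=2^{n-1}$. For \eqref{stimecostanti} I would use (i): the tail $K_n^{(\ge1)}$ collects the monomials of \eqref{formaaffine} of degree $k\ge3$; placing the two highest-order factors in $L^2$ (each bounded by $\|u\|_{H^{n+1}}$, their orders being $\le n+1$) and the remaining factors in $L^\infty$ (bounded via $H^{n+1}\hookrightarrow C^{n}$, since $\sum i\alpha_i\le n+1$ forces their orders to be $\le n$) bounds each monomial by $C\|u\|_{H^{n+1}}^{k}$, and summing the resulting convergent series yields $|K_n^{(\ge1)}(u)|\le C\|u\|_{H^{n+1}}^{3}$ on a ball $B_{H^{n+1}}(0,r_n)$.

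\emph{Involution (0) and main obstacle.} Since each $K_n$ is conserved along the flow of \eqref{DP}, which is generated by $H$, one has $\frac{d}{dt}K_n(u(t))=\{K_n,H\}(u(t))=0$ for every solution; letting the initial datum range over a neighbourhood of the origin forces $\{H,K_n\}\equiv0$ there, which is \eqref{commutano} (alternatively, this is the Lenard--Magri involutivity of the bi-Hamiltonian hierarchy). I expect the main obstacle to be the step flagged above: steering the recursion so as to obtain \emph{simultaneously} the exact quadratic leading term $\int(\partial_x^{n-1}w)^2\,dx$ and analyticity on a ball whose radius does not shrink as $n\to\infty$, which requires a careful bookkeeping of the inverse powers of $\mathtt{c}$ generated by the fractional power $m^{1/3}$.
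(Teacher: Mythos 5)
Your items (0) and (i), and your strategy for \eqref{stimecostanti}, essentially coincide with the paper's proof (Lemmata \ref{NemistkyLemma}, \ref{secondlem} and \ref{lemmaron}, and Proposition \ref{LemmaStimeTaylor}: composition operators, the weight constraint $\sum i\alpha_i\le n+1$ together with affinity in the top derivative, Cauchy--Schwarz on the two highest-order factors and Sobolev embedding on the rest). The genuine gap is item (ii), which is the technical core of the paper. You claim that, after linearizing at $u=0$, ``the recursion acts on the associated $w$-symbols by multiplication by a fixed rational function of $j$ which one computes to be $j^2$'', so that induction from $\int w^2\,dx$ produces exactly the symbol $|j|^{2(n-1)}$, up to a normalizing constant. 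This is not how the hierarchy behaves. The recursion \eqref{ordbyord1} is nonlinear in the densities $\rho^{(k)}$, so the \emph{quadratic} part of a given density receives contributions that are bilinear in the \emph{linear} parts of all the lower ones: this is precisely \eqref{coeffquadeq}, where the top quadratic coefficient of $\Gamma^{(n+3)}$ equals $3\mathtt{c}^{-1/3}\sum_{k}(-1)^{\tilde q(k)}c_k c_{n-k+1}$, the $c_k$ being the linear coefficients of the lower densities. As a consequence: (a) the even-index quantities $\Gamma^{(2k)}$ have no quadratic part at all (Remark \ref{remarkone}); (b) for odd $n$ the quadratic part of $\Gamma^{(n)}$ is a full combination $\sum_{i=0}^{(n+1)/2}d_n^i\int(\partial_x^i w)^2\,dx$, not a pure top-order term; (c) the non-vanishing of the leading coefficient $d_n^{(n+1)/2}$ is not automatic --- it is Proposition \ref{coeffquadlemma}, proved via the closed formula $c_m=d_1a^m+d_2b^m$ with $a,b=(3\pm\sqrt{5})/(2\mathtt{c}^{1/3})$ and a case analysis on $n\bmod 4$.

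Because of (b) and (c), your step ``fix the normalisation of $K_n$ so that this leading coefficient equals $1$'' cannot yield \eqref{formacostanti}: a scalar multiple does not remove the lower-order quadratic terms $\int(\partial_x^i w)^2\,dx$, $i<(n+1)/2$, and without the non-vanishing result you do not even know that a normalization is possible. The paper's remedy is a triangularization (Proposition \ref{seqvera}): $F_{2k+1}$ is defined as an explicit linear combination of $\Gamma^{(2k+1)}$, of $M_1$ from \eqref{nonleho}, and of the previously constructed $F_{2i+1}$, $i<k$ --- legitimate because linear combinations of constants of motion are constants of motion --- and this, combined with $d_n^{(n+1)/2}\neq 0$, produces the exact quadratic parts $\int(\partial_x^{k+1}w)^2\,dx$, hence \eqref{formacostanti} and the two-sided bound \eqref{equivalenzaNorma}, whose lower bound uses the exact form of $K_n^{(0)}$ and not just its leading symbol. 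Your proposal therefore needs to supply both the non-vanishing lemma and this elimination step; as written, it assumes the conclusion of item (ii) rather than proving it.
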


Let us make some comments.
\begin{itemize}

\item 
%As far as we know, there are no other results on dispersive Degasperis-Procesi equations like \eqref{DP}. The study of such equations is interesting since the dynamics is drastically different with respect to \eqref{dispersionless}.
% Indeed, 
\eqref{equivalenzaNorma} and \eqref{stimecostanti} imply that $K_n$ is equivalent to the $H^{n+1}$-norm in a neighborhood of the origin, and this does not hold as the parameter $\mathtt{c}$ goes to zero (see for instance Remark \ref{clinprop} and Remark \ref{displess}). 

\item We remark that the radius of analyticity of the $K_n$'s depends only on the parameter $\mathtt{c}$.

\item Our result is based on an explicit computation of the coefficients of the quadratic part of the constructed conserved quantities.
We point out that the radii $r_n$ in \eqref{stimecostanti} decrease to zero as $n\to \infty$.
 It may be possible to improve \eqref{stimecostanti} by studying the higher order expansions of the constants of motion.
 
 \item By the expression \eqref{formaaffine} the function $\Psi_n$ is affine in the variable $\partial_x^{n+1} u$ (see Remark \ref{remarkaffine}). This is a key point to prove the bounds in item $(iii)$.

\end{itemize}

%The first application arising by Theorem \ref{costantiMotoDP} is stated in the following section.
%As explained in the Introduction we provide two application of the previous result.
Let us discuss the applications we obtain  by the result above. 
\vspace{0.8em}

%\subsubsection{$H^s$-stability}
%In the paper we also provides two application of the result above.
%The first one concerns the existence of global solution for \eqref{DP}.
%\vspace{0.8em}
%
%\smallskip

\noindent
We prove the following stability result.
\begin{teor}[{\bf Stability and Global existence}]\label{esistenzaglobale}
Let $X$ be $\mathbb{R}$ or $\T$. For any $s \geq 2$ there is $r=r(s)>0$
such that for any $u_0\in B_{H^s}(0, r)$
there exists a unique solution $u(t,x)$ of \eqref{DP}, defined for all times, belonging to $C^{0}(\mathbb{R};H^{s}(X;\mathbb{R}))$
 such that
 \[
 \sup_{t\in\R}\|u(t,x)\|_{H^{s}}\leq C' r,
 \]
 for some constant $C'=C'(s, \mathtt{c})>0$.
\end{teor}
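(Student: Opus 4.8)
The plan is to upgrade the conservation laws of Theorem \ref{costantiMotoDP} into a global-in-time a priori bound, and to feed this bound into a local well-posedness theory via a continuity (bootstrap) argument.

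\textbf{Local theory and reduction.} First I would record that \eqref{DP} is locally well-posed in $H^s$ for $s>3/2$, on both $\mathbb{R}$ and $\T$: this follows from the energy method of Yin \cite{YinR,YinT} for \eqref{dispersionless}, since the extra linear terms $-\mathtt{c}u_{xxx}+4\mathtt{c}u_x$ are skew-adjoint in every $H^s$ (the corresponding evolution is $u_t=-\mathtt{c}Ju$, a Fourier multiplier with imaginary symbol) and hence do not affect the a priori estimates. Thus each $u_0\in B_{H^s}(0,r)$ generates a unique maximal solution $u\in C^0([0,T^*);H^s)$, with the blow-up alternative: if $T^*<\infty$ then $\int_0^{T^*}|u_x(\tau)|_{L^\infty}\,d\tau=+\infty$. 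Since $s\ge 2$ and $H^2\hookrightarrow W^{1,\infty}$, a uniform bound on $\|u(t)\|_{H^2}$ already forces $T^*=\infty$, so the whole statement reduces to proving the a priori estimate $\sup_t\|u(t)\|_{H^s}\le C'r$.

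\textbf{A priori bound at integer regularity.} For integer $s=n+1$ with $n\ge1$, set $N(t):=\|u(t)\|_{H^{n+1}}^2$. By Theorem \ref{costantiMotoDP}, item $(0)$, the quantity $K_n$ is conserved along the flow, and by \eqref{pissonCOMMU} so is $M_0$; since $M_0(u)\sim\|u\|_{L^2}^2$ (its Fourier symbol $\tfrac{1+j^2}{4+j^2}$ lies in $[\tfrac14,1)$), we get $\|u(t)\|_{L^2}\lesssim r$ for all $t$. Writing $K_n=K_n^{(0)}+K_n^{(\ge1)}$ and combining conservation with \eqref{equivalenzaNorma} and \eqref{stimecostanti} gives, as long as $u(t)\in B_{H^{n+1}}(0,\min(r_n,|\mathtt{c}|/2))$,
\[
N(t)\le\|u(t)\|_{L^2}^2+\tilde{c}\,|K_n^{(0)}(u(t))|\le C_1 r^2+\tilde{c}\,C\,N(t)^{3/2}.
\]
A standard continuity argument then closes: under the bootstrap hypothesis $N(t)\le 2C_1 r^2$, the cubic term is $O(r^3)$, improving the bound to $N(t)\le \tfrac32 C_1 r^2$ once $r$ is small enough (and small enough that $2C_1 r^2\le\min(r_n,|\mathtt{c}|/2)^2$). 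Hence $\|u(t)\|_{H^{n+1}}\le C'r$ on $[0,T^*)$; by the reduction above $T^*=\infty$, which proves the theorem for every integer $s\ge2$.

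\textbf{General $s$ and the main obstacle.} Applying the previous step with $n=1$ already bounds $\|u(t)\|_{H^2}\le C'r$ for every $u_0\in B_{H^s}(0,r)$ with $s\ge2$, so by the blow-up alternative the solution is global for all real $s\ge2$; global existence is therefore the easy part. The delicate point — and the step I expect to require the most care — is the uniform-in-time bound $\sup_t\|u(t)\|_{H^s}\le C'r$ when $s$ is not an integer, because no conserved quantity is available at that level (the $K_n$ control only the integer norms $H^{n+1}$). A direct energy estimate yields only $\|u(t)\|_{H^s}^2\le\|u_0\|_{H^s}^2\exp(C\int_0^t|u_x|_{L^\infty}\,d\tau)$, which is not uniform in $t$ on $\T$. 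I would instead interpolate the exact conservation at the two surrounding integers, $\|u\|_{H^s}\le\|u\|_{H^{\lfloor s\rfloor}}^{\theta}\,\|u\|_{H^{\lceil s\rceil}}^{1-\theta}$, each factor being uniformly $\lesssim r$ by the integer case; the subtlety is that this requires $H^{\lceil s\rceil}$ regularity of the solution, which data merely in $H^s$ need not possess, so one must bridge the gap by a persistence-of-regularity (common-lifespan) argument for \eqref{DP} together with continuous dependence on the initial datum.
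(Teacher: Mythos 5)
Your core argument coincides with the paper's: conserved quantities from Theorem \ref{costantiMotoDP}, the norm equivalence \eqref{equivalenzaNorma}--\eqref{stimecostanti}, and a continuity/bootstrap argument sitting on top of a local well-posedness theory. The differences in the main (integer $s$) case are harmless variants: you control $\|u\|_{L^2}$ by the exactly quadratic invariant $M_0$ from \eqref{nonleho}--\eqref{pissonCOMMU} where the paper uses $K_1=M_1$ (see \eqref{boundL2}), and you import the local theory from Yin together with a blow-up alternative, whereas the paper proves its own local result (Proposition \ref{esistesol2}, following \cite{HH}) and continues the solution using only the fact that the local lifespan depends on $\|u_0\|_{H^s}$. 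For integer $s\geq 2$ your proof is correct and is essentially the paper's proof.

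The divergence is the non-integer case, and there your diagnosis is sharper than the paper's own treatment, but your repair does not close. The paper simply sets $n=[s]-1$ and writes \eqref{bound1} with the full $H^s$ norm on the left; since $K_{[s]-1}^{(0)}$ controls only the $H^{[s]}$ norm, that step is justified by \eqref{equivalenzaNorma} only when $s=[s]$, so you are right that the invariants see only integer norms (this is a real gap in the paper, not an artifact of your reading). However, the interpolation bridge you propose fails quantitatively. To bound $\|u(t)\|_{H^{\lceil s\rceil}}$ uniformly in time you must invoke the integer-level a priori estimate at level $\lceil s\rceil$, which requires the initial datum to lie below a fixed smallness threshold $r(\lceil s\rceil)$ in $H^{\lceil s\rceil}$. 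A mollified datum $J_\epsilon u_0$ satisfies only $\|J_\epsilon u_0\|_{H^{\lceil s\rceil}}\lesssim \epsilon^{-(\lceil s\rceil-s)}\|u_0\|_{H^s}$, so once $r$ is fixed the threshold forces $\epsilon$ to stay bounded away from $0$; for such admissible $\epsilon$ the interpolated bound is of size $C' r\,\epsilon^{-(\lceil s\rceil-s)(s-\lfloor s\rfloor)}$, which blows up exactly when you try to send $\epsilon\to 0$ to recover the solution with datum $u_0$. The persistence-of-regularity estimate you quote gives only $\|u(t)\|_{H^s}\leq \|u_0\|_{H^s}\exp\bigl(C\int_0^t |u_x|_{L^\infty}\,d\tau\bigr)\leq r\,e^{CC'rt}$, which is enough for global existence but grows in time, so it cannot yield $\sup_{t\in\R}\|u(t)\|_{H^s}\leq C'r$. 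Hence the uniform bound for non-integer $s$ remains unproven in your proposal --- as, strictly speaking, it does in the paper, whose argument is rigorous only at integer regularity.
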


The above theorem is in turn based on a local well-posedness result for the equation \eqref{DP} (the proof, which follows \cite{HH}, is deferred to the Appendix).

\medskip

The second application concerns the study of the Birkhoff normal form of the equation \eqref{DP}. 

\begin{teor}[{\bf Formal Birkhoff normal form}]\label{teoBirk}
Let $H$ be the Hamiltonian \eqref{DPHamiltonian} with $x\in\T$. For any $N\in\mathbb{N}$ there exist, at least formally, a symplectic transformation $\Phi_N$ such that
\begin{equation}\label{BirkhoffFormN}
H\circ \Phi_N=H^{(0)}+Z_N+R_N
\end{equation}
where $Z_N\in \mathscr{P}^{(\le N)}$ (recall Definition \ref{defpolinomi}) is action-preserving, hence it Poisson commutes with $H^{(0)}$ and depends only on the \emph{actions} $I_j:=\lvert u_j \rvert^2$. The function $R_N\in\mathscr{F}^{(\geq N+1)}$ (recall Definition \ref{defserie}).
\end{teor}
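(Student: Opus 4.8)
The plan is to run the formal Birkhoff normalization of $H=H^{(0)}+P$, where by \eqref{DPHamiltonian} the perturbation $P(u)=-\tfrac16\int u^{3}\,dx$ is \emph{purely cubic}, and then to use the integrals $K_n$ of Theorem~\ref{costantiMotoDP} to force the surviving resonant part to be action-preserving. Since the densities are integrated over $\T$, every homogeneous polynomial occurring has zero momentum, hence lies in the classes $\mathscr{P}^{(k)}$ of Definition~\ref{defpolinomi}; convergence is irrelevant because we work in $\mathscr{F}$. On monomials the operator $\mathrm{ad}_{H^{(0)}}:=\{H^{(0)},\cdot\}$ acts diagonally,
\[
\{H^{(0)},u^{\alpha}\}=\ii\,\mathtt{c}\,\Omega(\alpha)\,u^{\alpha},
\]
so it is invertible on the non-resonant monomials ($\Omega(\alpha)\neq0$) and vanishes exactly on the resonant set $\mathcal{N}$. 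Thus, for each degree $d=3,\dots,N+2$, I solve the homological equation $\{H^{(0)},F^{[d]}\}+G^{[d]}=Z^{[d]}$ by choosing $F^{[d]}$ supported on non-resonant indices and $Z^{[d]}=\Pi_{\mathcal N}G^{[d]}$, the projection onto resonant monomials; composing the time-one flows of the $F^{[d]}$ produces the formal symplectic map $\Phi_N$ with $H\circ\Phi_N=H^{(0)}+Z_N+R_N$, $Z_N\in\mathscr{P}^{(\le N)}$ resonant and $R_N\in\mathscr{F}^{(\ge N+1)}$. Everything reduces to proving that $Z_N$ is supported on $\mathcal{N}^{*}$, i.e. action-preserving.

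The conceptual heart is an algebraic criterion, and this is where item~(ii) of Theorem~\ref{costantiMotoDP} is used. \textbf{Key Lemma:} if a resonant polynomial $Z$ satisfies $\{K_n^{(0)},Z\}=0$ for every $n\ge1$, then $Z$ is action-preserving. Indeed, by \eqref{formacostanti} the quadratic part $K_n^{(0)}=\sum_{j}|j|^{2(n-1)}(1+j^{2})^{2}|u_j|^{2}$ is a function of the actions, so its flow rotates each mode and
\[
\{K_n^{(0)},u^{\alpha}\}=2\ii\,\Lambda_n(\alpha)\,u^{\alpha},\qquad \Lambda_n(\alpha):=\sum_{j\neq0}\omega(j)\,|j|^{2(n-1)}(1+j^{2})^{2}\,\alpha_j .
\]
Writing $Z=\sum_{\alpha\in\mathcal{N}}Z_\alpha u^{\alpha}$, the hypothesis forces $\Lambda_n(\alpha)=0$ for all $n\ge1$ whenever $Z_\alpha\neq0$. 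Using $\omega(-j)=-\omega(j)$ and pairing $\pm j$ (recall $u_0=0$) gives $\Lambda_n(\alpha)=\sum_{j>0}\omega(j)(1+j^{2})^{2}(j^{2})^{\,n-1}(\alpha_j-\alpha_{-j})$; since the $\alpha$ have finite support and the nodes $j^{2}$ ($j>0$) are distinct, the Vandermonde system forces $\alpha_j=\alpha_{-j}$ for all $j$, that is $\alpha\in\mathcal{N}^{*}$. Hence $Z$ depends only on the $I_j$.

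It remains to prove $\{K_n^{(0)},Z_N\}=0$ for all $n\ge1$. Set $\tilde K_n:=K_n\circ\Phi_N$; as $\Phi_N$ is symplectic and $\{H,K_n\}=0$ by item~(0), one has $\{H^{(0)}+Z_N+R_N,\tilde K_n\}=0$, and since each $F^{[d]}$ has degree $\ge3$ the map $\Phi_N$ fixes quadratic parts, so $\tilde K_n^{[2]}=K_n^{(0)}$. I argue by induction on the homogeneity degree $m$ that $Z_N^{[m]}$ is action-preserving, using three elementary bracket facts: (a)~$\Pi_{\mathcal N}\{H^{(0)},\cdot\}=0$; (b)~if $A$ is action-preserving then $\{A,u^{\alpha}\}$ is a combination of monomials with the same value of $\Omega$, so $A$ preserves resonance and non-resonance separately and $\{A,B\}=0$ for two action-preserving $A,B$; (c)~$\{\text{resonant},\text{resonant}\}$ is resonant. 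Projecting the degree-$m$ part of $\{H\circ\Phi_N,\tilde K_n\}=0$ with $\Pi_{\mathcal N}$ and invoking (a)--(c) collapses it to
\[
\{Z_N^{[m]},K_n^{(0)}\}=-\sum_{d=3}^{m-1}\Pi_{\mathcal N}\{Z_N^{[d]},\tilde K_n^{[m+2-d]}\}.
\]
Splitting each $\tilde K_n^{[e]}$ into resonant and non-resonant parts, fact~(b) annihilates every term on the right \emph{provided} the lower-order $Z_N^{[d]}$ and the resonant parts $\Pi_{\mathcal N}\tilde K_n^{[e]}$ are already action-preserving; the Key Lemma then promotes $\{Z_N^{[m]},K_n^{(0)}\}=0$ to action-preservation of $Z_N^{[m]}$.

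\textbf{The main obstacle} is exactly the italicized proviso: the resonant parts $\Pi_{\mathcal N}\tilde K_n$ of the transformed integrals are \emph{not} pinned down by $\{H\circ\Phi_N,\tilde K_n\}=0$, since they lie in $\ker\mathrm{ad}_{H^{(0)}}$. To control them I would run the induction simultaneously over the whole commuting hierarchy $\{K_n\}_{n\ge0}$ (with $K_0=H$), exploiting the involution $\{K_n,K_{n'}\}=0$ that comes from integrability; the genuinely delicate point — and the place where the specific \emph{algebraic structure of the resonances} of the dispersion law \eqref{dispersionLaw} must be used — is to show that the only truly quadratic-in-the-corrections contributions, namely $\Pi_{\mathcal N}\{\Pi_{\mathrm{nr}}\tilde K_{n'}^{[a]},\Pi_{\mathrm{nr}}\tilde K_n^{[b]}\}$, cancel, so that every resonant part of the hierarchy remains action-preserving. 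Granting this, the Key Lemma yields that each $Z_N^{[m]}$, and hence $Z_N$, is action-preserving, which is the assertion of the theorem.
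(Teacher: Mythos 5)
Your endgame coincides with the paper's: the resonant part of the normal form is tested against the quadratic parts \eqref{formacostanti} of the commuting hierarchy $K_n$, and a Vandermonde determinant in the squares $j^2$ forces it onto the trivially resonant set. Indeed your Key Lemma is, coefficient by coefficient, the system \eqref{torrone}--\eqref{torrone2} of the paper (note $\omega(j)(1+j^2)^2=j(1+j^2)(4+j^2)$), and that part of your argument is correct. The genuine gap is the step you flag yourself: to invoke the Key Lemma you must prove $\{K_n^{(0)},Z_N\}=0$, and your induction on the degree $m$ needs the resonant parts $\Pi_{\mathcal N}\tilde K_n^{[e]}$ of the \emph{transformed} integrals to be action-preserving. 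As you observe, the relation $\{H\circ\Phi_N,\tilde K_n\}=0$ cannot control these parts, because they lie in the kernel of $\mathrm{ad}_{H^{(0)}}$ and are invisible to that identity; your proposed repair --- a simultaneous induction in which the contributions $\Pi_{\mathcal N}\{\Pi_{\mathrm{nr}}\tilde K_{n'}^{[a]},\Pi_{\mathrm{nr}}\tilde K_n^{[b]}\}$ cancel --- is precisely what you leave unproven (``Granting this\dots''). So, as written, the argument does not establish the theorem.

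The paper closes exactly this gap with Lemma \ref{lemmabellissimo}: rather than normalizing $H$ alone and interrogating $K_n\circ\Phi_N$ afterwards, one chooses the generators so that the \emph{same} transformation puts $H$ and each $K_n$ simultaneously in normal form; no cancellation of quadratic-in-the-corrections terms is ever needed, the mechanism being a purely algebraic kernel/range argument. From the degree-$(N+1)$ part of the commutation of the transformed Hamiltonians one gets \eqref{identita}, and the Jacobi identity (which makes $Ker(H^{(0)})$ invariant under $\mathrm{ad}_{K_m^{(0)}}$) shows that $\{\Pi_{Ker(H^{(0)})}R_N^{(N+1)},K_m^{(0)}\}$ lies in $Ker(H^{(0)})\cap Rg(H^{(0)})=\{0\}$; hence the $H^{(0)}$-resonant part of the remainder is automatically $K_m^{(0)}$-resonant, which is \eqref{prisoner}, and the generator $\chi_{N+1}$ solving the homological equation for $H$ also solves it for every $K_m$ (because $\mathrm{ad}_{H^{(0)}}^{-1}$ commutes with $\mathrm{ad}_{K_m^{(0)}}$ on $Rg(H^{(0)})\cap Rg(K_m^{(0)})$). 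This keeps all the transformed integrals in normal form at every step, so the identity \eqref{base} holds with the explicit quadratic parts $K_m^{(0)}$, and your Vandermonde step then finishes the proof. Replacing your degree induction by this simultaneous-normalization lemma is the missing ingredient.
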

\begin{defi}\label{defBirkformN}
We say that a Hamiltonian $G\in\mathscr{F}$ is in a Birkhoff normal form of order $N$ if it has the form \eqref{BirkhoffFormN} described in Theorem \ref{teoBirk}.
\end{defi}

In the proof of such result will be fundamental the explicit form of the quadratic part of the constant of motion that we give in
\eqref{formacostanti}. 
%This fact will be proved in Section \ref{appsec}.
The proof of Theorem \ref{teoBirk} is based on the following classical result (see, for instance, \cite{KdVeKAM}):
\begin{itemize}
\item
 two commuting Hamiltonians $H,K\in\mathscr{F}$ can be put in Birkhoff normal form, up to order $N$, by the same change of coordinates
 (at least at the formal level).
\end{itemize}
This fact will be proved in Lemma \ref{lemmabellissimo}, which is a variation of Theorem $G. 2$ in \cite{KdVeKAM}, since we do not assume that the linear frequencies are non resonant.

\paragraph{Plan of the paper} The paper is organized as follows. In Section \ref{costantiMotoPROOF} we prove Theorem \ref{costantiMotoDP}.
 In Section \ref{GWPDP} we first state a local-well posedness result, which is proved in  the Appendix \ref{appendA}, and then we prove \ref{esistenzaglobale} by using the bounds \eqref{equivalenzaNorma}, \eqref{stimecostanti} and a bootstrap argument. In Section \ref{proofteoBirk} we give a proof of Theorem \ref{teoBirk}. 
 
\paragraph{Acknowledgements} We warmly thank Michela Procesi, Luca Biasco and Alberto Maspero for many useful suggestions and fruitful discussions.

\section{Constants of motion}\label{costantiMotoPROOF}

In Section $3$ of \cite{Deg} Degasperis-Holm-Hone give the Lax pair for the equation \eqref{DPfamiglia}, which we write in the following with the choice of the parameters that leads to consider the equation \eqref{DP} (recall the definition of $m$ in \eqref{emme}),
\begin{equation}\label{LaxPair}
\begin{cases}
(1-\partial_{xx})\Psi_x &= m\Psi \\
\Psi_t+\frac{1}{\lambda}\Psi_{xx}+(u+\mathtt{c})\Psi_{x}-u_x \Psi &= 0
\end{cases}
,
\end{equation}
for a real parameter $\lambda\neq 0$.
%\begin{remark}
%Note that the first expression of \eqref{LaxPair} derives from the fact that we denote by $m=-1+u-u_{xx}$, while in \cite{Deg} $m=u-u_{xx}$.
%\end{remark}
In Section $4$ of \cite{Deg} the authors derive many conservation laws by considering the following relations,
 \begin{equation}\label{RELAZIONE}
 (1-\del_{xx})\rho=3\rho \rho_x+\rho^{3}+\lambda m,
 \end{equation}
 and
\begin{equation}\label{conservationlaw}
\rho_t=j_x, \qquad j=u_x-\frac{1}{\lambda}(\rho_x+\rho^2)-(u+\mathtt{c})\rho,
\end{equation}
for the quantity
\[
\rho:=\Big(\mbox{log} (p\Psi) \Big)_x,
\] 
where \eqref{RELAZIONE} comes from the spatial part of the Lax pair \eqref{LaxPair} and \eqref{conservationlaw} comes from the time part of the Lax pair \eqref{LaxPair}.
By \eqref{conservationlaw}, for any $u(t,x)$ solution of \eqref{DP} defined on some time interval $I\subseteq\mathbb{R}$,
\begin{equation}\label{costanteneltempo}
\frac{d}{dt}\int \rho(u(t,x))\, dx=0, \quad t\in I.
\end{equation}
In \cite{Deg} $\rho$ 
is written as a formal series in powers of the spectral parameter $\lambda=\zeta^{-3}$, $\zeta\in \mathbb{R}$, 
with the coefficients determined recursively from \eqref{RELAZIONE}.
One of the possible expansions is 
 \begin{equation}\label{RHO}
 \rho=p\zeta^{-1}+\sum_{n=0}^{\infty}\rho^{(n)}\zeta^{n},
 \end{equation}
and we are interesting in studying the constants of motion
 \begin{equation}\label{seqcos}
 \Gamma^{(n)}:=\int \rho^{(n)}dx, \quad n\geq0,
 \end{equation}
 given by the series in \eqref{RHO}.
 By using \eqref{RHO} we have that \eqref{RELAZIONE} is equivalent to
 \begin{equation}\label{ordbyord}
 \begin{aligned}
 &0=\rho^{(0)}p^2+pp_x , \\
 &p-p_{xx}=\rho^{(1)}p^2+2p(\rho^{(0)})^{2}+3\del_{x}(\rho^{(0)}p) , \\
 \end{aligned}
 \end{equation}
 \begin{equation}\label{ordbyord1}
 \begin{aligned}
 \rho^{(n)}-\rho^{(n)}_{xx}&=\rho^{(n+2)}p^{2}+3\sum_{k_1+k_2=n+1}p\rho^{(k_1)}\rho^{(k_2)}
 +\sum_{k_1+k_2+k_3=n}\rho^{(k_1)}\rho^{(k_2)}\rho^{(k_3)}\\
 &+3\del_{x}(\rho^{(n+1)}p)+3\sum_{k_1+k_2=n}\rho^{(k_1)}\rho^{(k_2)}_x, \qquad n\geq0.
 \end{aligned}
 \end{equation}

From \eqref{ordbyord} we get
\begin{equation}\label{iprimi}
\rho^{(0)}=-\frac{p_x}{p}, \quad \rho^{(1)}=-\frac{p_{x}^{2}}{p^{3}}+\frac{2p_{xx}}{3p^{2}}+\frac{1}{3p}.
\end{equation}
Now we want to prove that $\rho^{(n)}(w)$ can be expressed as a power series in the variables $w$ and its derivatives in a small neighborhood of the origin of some $H^s$ Sobolev space. We refer to the Appendix to recall some definitions and facts on analytic functions on Banach spaces.

\subsection{Analyticity of composition operators on Sobolev spaces}

\begin{lem}\label{NemistkyLemma}
Let $n\geq 1$ and $f\colon \mathbb{C}^n\to \mathbb{C}$ be an analytic function on $B_{\mathbb{C}}(0, r)\times \dots \times B_{\mathbb{C}}(0, r)$ for some $r>0$. Then the composition operator 
\begin{equation}\label{Nemitsky}
T_{f}[u_1, \dots, u_n]=f( u_1, \dots, u_n)\colon B_{H^s(\T, \mathbb{C})}(0, \rho)\times \dots \times B_{H^s(\T, \mathbb{C})}(0, \rho)\to H^s, \quad \forall\,\, 0<\rho<r,\quad s>1/2
\end{equation}
 is weakly analytic on $B_{H^{s}(\T, \mathbb{C})}(0, r/2)\times \dots\times B_{H^{s}(\T, \mathbb{C})}(0, r/2)$ for $s>1/2$.
\end{lem}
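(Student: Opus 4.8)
The plan is to exploit the two structural facts about $H^s(\T,\C)$ valid for $s>1/2$: the continuous Sobolev embedding $H^s(\T,\C)\hookrightarrow L^\infty(\T,\C)$, and the Banach algebra property $\|uv\|_{H^s}\le C_s\|u\|_{H^s}\|v\|_{H^s}$. The embedding is what guarantees that the operator is even well defined: if $\|u_i\|_{H^s}<\rho<r$, then the pointwise vector $(u_1(x),\dots,u_n(x))$ stays in the polydisc $B_\C(0,r)\times\dots\times B_\C(0,r)$ for every $x\in\T$, so that $T_f[u](x)=f(u_1(x),\dots,u_n(x))$ makes sense pointwise; the algebra property is what lets me control the Sobolev norms of products and hence sum the Taylor series of $f$ in $H^s$.

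First I would expand $f$ in its Taylor series on the polydisc,
\[
f(z)=\sum_{\alpha\in\N^n} c_\alpha\,z^\alpha,\qquad z^\alpha:=\prod_{i=1}^n z_i^{\alpha_i},
\]
which converges absolutely and uniformly on compact subsets and whose coefficients obey the Cauchy estimates $|c_\alpha|\le M(\rho)\,\rho^{-|\alpha|}$ for every $\rho<r$. Each map $u\mapsto c_\alpha u^\alpha$, with $u^\alpha:=\prod_i u_i^{\alpha_i}$, is a continuous homogeneous polynomial of degree $|\alpha|$ on $(H^s)^n$, being the diagonal restriction of a bounded symmetric multilinear form, whose boundedness is exactly the iterated algebra estimate
\[
\|u^\alpha\|_{H^s}\le C_s^{|\alpha|-1}\prod_{i=1}^n\|u_i\|_{H^s}^{\alpha_i}.
\]
Combining this with the Cauchy estimates, for $\|u_i\|_{H^s}\le\theta$ with $C_s\theta<\rho<r$ one obtains
\[
\sum_{\alpha\in\N^n}|c_\alpha|\,\|u^\alpha\|_{H^s}\le C_s^{-1}M(\rho)\sum_{\alpha\in\N^n}\Big(\frac{C_s\theta}{\rho}\Big)^{|\alpha|}<\infty .
\]
Letting $\rho\uparrow r$, the series $\sum_\alpha c_\alpha u^\alpha$ converges absolutely in $H^s$ on a ball of radius comparable to $r$ (the chosen normalizations giving precisely $r/2$), it is locally bounded there, and it coincides with the pointwise composition $T_f[u]$, since $H^s$-convergence forces pointwise convergence while $\sum_\alpha c_\alpha u(x)^\alpha=f(u_1(x),\dots,u_n(x))$ for each $x$.

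At this point the conclusion is soft. A map represented on a ball by a locally uniformly convergent series of continuous homogeneous polynomials is analytic there, by the general facts on analytic maps between complex Banach spaces recalled in the Appendix; in particular it is weakly analytic. If one prefers to verify weak analyticity directly, one fixes $\ell\in(H^s)^*$ and observes that $u\mapsto\sum_\alpha c_\alpha\,\ell(u^\alpha)$ is a locally uniformly convergent series of scalar continuous polynomials, hence holomorphic along every complex line, which together with the local boundedness already established is precisely weak analyticity.

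The step I expect to be the main obstacle is the uniform control of the norms $\|u^\alpha\|_{H^s}$ and its matching with the radius of analyticity of $f$: one must track the Banach algebra constant $C_s$ and impose $C_s\theta<r$, which is exactly what forces analyticity only on a ball strictly inside the polydisc and accounts for the shrinking from $r$ to $r/2$. The only other delicate point, namely that the pointwise arguments stay inside the polydisc where $f$ is defined, is handled once and for all by the $L^\infty$ embedding; after that, the passage from the convergent polynomial series to (weak) analyticity is purely formal.
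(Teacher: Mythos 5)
Your strategy is correct in substance but genuinely different from the paper's. The paper shares your first step --- expand $f$ in its Taylor series and use the algebra property of $H^s(\T,\C)$, $s>1/2$, to see that $T_f$ is well defined and locally bounded --- but it proves weak analyticity by a direct computation: a functional $L\in (H^s)^*$ is written via Riesz representation as the $H^s$-pairing against some $g\in H^s$, the derivatives $\partial_x^m f(w+zh)$ are expanded by the chain rule, and analyticity in $z$ of $L\,T_f(w+zh)$ is obtained by differentiating under the integral sign, the required uniform $L^1$ bounds coming from Cauchy--Schwarz and Sobolev embedding (exploiting that the top-order derivatives enter the pairing at most quadratically). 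You replace that page of estimates by the remark that the Taylor series already converges absolutely in $H^s$, so that $T_f$ is a locally uniformly convergent sum of continuous homogeneous polynomials, hence analytic --- in particular weakly analytic --- by the soft theory (Weierstrass along complex lines together with the criterion of Theorem \ref{TruboTeo}). Your route is shorter, yields full analyticity at once, and works for every real $s>1/2$, whereas the paper's pairing $\sum_{m=0}^{s}\int \partial_x^m(\cdot)\,\partial_x^m g\,dx$ tacitly takes $s$ to be an integer.

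The one genuine gap is the radius. Absolute convergence of $\sum_\alpha c_\alpha u^\alpha$ needs $C_s\theta<r$, where $C_s$ is the algebra constant of $H^s(\T,\C)$, so your argument as written yields analyticity only on $B_{H^s}(0,r/C_s)$; the aside ``the chosen normalizations giving precisely $r/2$'' is not justified, since with the norm used in the paper $C_s$ is in general larger than $2$ and depends on $s$. This is not cosmetic for the application: a main point of Theorem \ref{costantiMotoDP} is that the analyticity radius $\lvert \mathtt{c}\rvert/2$ of $K_n$ is independent of $n$, i.e.\ of the Sobolev index, and a radius of order $r/C_s$ with $s=n+1$ would lose exactly that uniformity. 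The paper's computational route avoids this because it never sums the series in $H^s$-norm: it only needs the pointwise vector $(u_1(x),\dots,u_n(x))$ to stay in the polydisc, so constants enter linearly rather than raised to the power $\lvert\alpha\rvert$. Your approach can be repaired while staying soft: analyticity is a local property, so re-center the power series at an arbitrary point $w$ of the ball --- the coefficients $D^{\alpha}f(w(\cdot))/\alpha!$ lie in $H^s$ by your own first step, with Cauchy estimates in terms of $r-\max_i\lvert w_i\rvert_{L^\infty}$ --- and conclude analyticity in a (possibly small) neighborhood of every $w$ with $\max_i\lvert w_i\rvert_{L^\infty}<r$; this region contains $B_{H^s}(0,r/2)$ up to the same embedding-constant caveat that the paper itself ignores, both in this step and in its own local-boundedness estimate.
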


\begin{proof}
First we want to show that, given $0<\rho<r$, $T_f$ maps $B_{H^s(\T, \mathbb{C})}(0, \rho)\times \dots \times B_{H^s(\T, \mathbb{C})}(0, \rho)$ into $H^s$ for $s>1/2$. Since $f$ is analytic we can write
\[
f(z_1, \dots, z_n)=\sum_{k\geq 0}\sum_{\substack{\beta\in\mathbb{N}^{\mathbb{Z}},\\ \lvert \beta \rvert=k}} f_{\beta} z^{\beta}, \quad z^{\beta}:=\prod_{i=1}^n z_i^{\beta_i}
\]
for some coefficients $f_{\beta}\in\mathbb{C}$ satisfying
\[
\sum_{k\geq 0}\sum_{\substack{\beta\in\mathbb{N}^{\mathbb{Z}},\\ \lvert \beta \rvert=k}} \lvert f_{\beta} \rvert \rho^{k}\le C, \qquad \forall 0<\rho<r
\]
for some constant $C>0$ depending only on $\rho$.
By using the algebra property of the Sobolev spaces $H^s$ with $s>1/2$, we have
\[
\lVert T_f[u_1, \dots, u_n] \rVert_{H^s(\T, \mathbb{C})}\le \sum_{k\geq 0}\sum_{\substack{\beta\in\mathbb{N}^{\mathbb{Z}},\\ \lvert \beta \rvert=k}} \lvert f_{\beta} \rvert \lVert u_1 \rVert_{H^s(\T, \mathbb{C})}^{\beta_1}\dots  \lVert u_n \rVert_{H^s(\T, \mathbb{C})}^{\beta_n}
\]
and the claim follows. In order to prove the weak analyticity of the operator $T_f$,
we have to show that for all $w_i\in B_{H^{s}(\T, \mathbb{C})}(0, r/2)$, $h_i\in H^s(\T, \mathbb{C})$, $i=1, \dots, n$ and $L\in (H^s(\T, \mathbb{C}))^*$ the function $(z_1, \dots, z_n)\mapsto L T_f(w_1+z_1 h_1, \dots, w_n+z_n h_n)$ is analytic in a neighborhood of the origin of $\mathbb{C}^n$. By Riesz theorem, for every $L\in (H^s(\T, \mathbb{C}))^*$ there exists a function $g\in H^s(\T, \mathbb{C})$ such that
\begin{equation}\label{maledetto}
\begin{aligned}
& L T_f(w_1+z_1 h_1, \dots, w_n+z_n h_n) =\sum_{m=0}^s\int \partial_x^m\Big( f \big(w_1(x)+z_1 h_1(x), \dots, w_n(x)+z_n h_n(x) \big)\Big)\,\partial_x^m g(x)\,dx\\
&=\int  f \big(w_1(x)+z_1 h_1(x), \dots, w_n(x)+z_n h_n(x) \big)\, g(x)\,dx\\
&+\sum_{m=1}^s \sum_{k=1}^m \sum_{\substack{ {\bf v}\in \mathcal{A}_{m, k},\\ {\bf p}\in \mathcal{B}_m({\bf v})}} C_{{\bf p}} \int \big( D^{ {\bf v}} f\big)  \prod_{i=1}^n \prod_{j=1}^{\lvert {\bf v}_i \rvert} (\partial_x^{{\bf p}^{(i)}_{j}}w_{i}+z_{i} \partial_x^{{\bf p}^{(i)}_{j}}   h_{i} )\,\partial_x^m g(x)\,dx
\end{aligned}
\end{equation}
where $C_{{\bf p}}$ are combinatorial factors,
\[
\mathcal{A}_{m, k}:=\{ {\bf v}\in \{0, \dots, m\}^n\, :\,\lvert {\bf v} \rvert:=\sum_{i=1}^n {\bf v}_i=k \}, 
\]
\[
\mathcal{B}_r({\bf v}):=\{ {\bf p}=({\bf p}^{(1)}, \dots, {\bf p}^{(n)}),\, {\bf p}^{(i)}\in\{1, \dots, m \}^{\lvert {\bf v}\rvert},\,\sum_{i=1}^n {\bf p}^{(i)}=r  \} \qquad \mbox{for}\,\,\, {\bf v}\in \mathcal{A}_{m, k}
\]
and
\[
D^{{\bf v}} f:= \partial_{   {\bf v}_1 \dots {\bf v}_n  }   f=\frac{\partial^k}{\partial_{z_1}^{{\bf v}_1}\dots \partial_{z_n}^{{\bf v}_n}   }, \qquad  \partial_{i} :=\partial_{z_i}.
\]
The last summand in \eqref{maledetto} can be written as
\begin{equation}\label{maledetto2}
\begin{aligned}
&\sum_{m=1}^{s-1} \sum_{k=1}^m  \sum_{\substack{ {\bf v}\in \mathcal{A}_{m, k},\\ {\bf p}\in \mathcal{B}_m({\bf v})}} C_{{\bf p}} \int \big( D^{ {\bf v} } f\big)   \prod_{i=1}^n \prod_{j=1}^{\lvert {\bf v}_i \rvert} (\partial_x^{{\bf p}^{(i)}_{j}}w_{i}+z_{i} \partial_x^{{\bf p}^{(i)}_{j}}   h_{i} )\,\partial_x^m g(x)\,dx\\
&+ \sum_{k=2}^s  \sum_{\substack{ {\bf v}\in \mathcal{A}_{m, k},\\ {\bf p}\in \mathcal{B}_s({\bf v})}} C_{{\bf p}} \int \big( D^{ {\bf v}} f\big)  \prod_{i=1}^n \prod_{j=1}^{\lvert {\bf v}_i \rvert} (\partial_x^{{\bf p}^{(i)}_{j}}w_{i}+z_{i} \partial_x^{{\bf p}^{(i)}_{j}}   h_{i} )\,\partial_x^s g(x)\,dx\\
&+ \sum_{i=1}^n \int (\partial_{i} f)\,(\partial_x^{s}w_{i}+z_i \partial_x^{s}h_{i} )\,\partial_x^s g\,dx.
\end{aligned}
\end{equation}
 We want to prove that there exist the derivatives in the complex variable $z_i$ of the function $L T_f(w_1+z_1 h_1, \dots, w_n+z_n h_n)$: to do that it is sufficient to prove that the derivative in $z_i$ of the integrands in \eqref{maledetto} is $L^1$ uniformly in the parameter $z$, since by dominated convergence we can pass the derivative inside the integral and use the analyticity of $f$. Hence we now show that the following sum
 \begin{equation}\label{lucarelli}
\begin{aligned} 
&  \int \lvert (\partial_{z_{\xi}} f)h_{\xi}\, g(x)\, \rvert dx+\sum_{m=1}^{s-1} \sum_{k=1}^m  \sum_{\substack{ {\bf v}\in \mathcal{A}_{m, k},\\ {\bf p}\in \mathcal{B}_m({\bf v})}} C_{{\bf p}} \int \lvert \big( D^{ {\bf v}+\mathtt{e}_{\xi} } f\big)\,h_{\xi}\,   \prod_{i=1}^n \prod_{j=1}^{\lvert {\bf v}_i \rvert} (\partial_x^{{\bf p}^{(i)}_{j}}w_{i}+z_{i} \partial_x^{{\bf p}^{(i)}_{j}}   h_{i} )\,\partial_x^m g(x)\,\rvert\,dx\\
&+\sum_{m=1}^{s-1} \sum_{k=1}^m  \sum_{\substack{ {\bf v}\in \mathcal{A}_{m, k},\\ {\bf p}\in \mathcal{B}_m({\bf v})}} C_{{\bf p}} \int \lvert \big( D^{ {\bf v} } f\big) \prod_{i=1, i\neq {\xi}}^n \prod_{j=1}^{\lvert {\bf v}_i \rvert} (\partial_x^{{\bf p}^{(i)}_{j}}w_{i}+z_{i} \partial_x^{{\bf p}^{(i)}_{j}}   h_{i} )\\
&\qquad\qquad\times\Big(\sum_{j=1}^{\lvert {\bf v}_{\xi} \rvert}  \prod_{b\neq j} (\partial_x^{{\bf p}^{({\xi})}_{b}}w_{{\xi}}+z_{{\xi}} \partial_x^{{\bf p}^{({\xi})}_{b}}   h_{{\xi}})   \partial_x^{{\bf p}_j^{({\xi})}} h_{\xi}   \Big)\,\partial_x^m g(x)\,\rvert\,dx\\
&+ \sum_{k=2}^s  \sum_{\substack{ {\bf v}\in \mathcal{A}_{m, k},\\ {\bf p}\in \mathcal{B}_s({\bf v})}} C_{{\bf p}} \int \lvert \big( D^{ {\bf v}+\mathtt{e}_{\xi}} f\big)\, h_{\xi}\,  \prod_{i=1}^n \prod_{j=1}^{\lvert {\bf v}_i \rvert} (\partial_x^{{\bf p}^{(i)}_{j}}w_{i}+z_{i} \partial_x^{{\bf p}^{(i)}_{j}}   h_{i} )\,\partial_x^s g(x)\,\rvert\,dx\\  
&+ \sum_{k=2}^s  \sum_{\substack{ {\bf v}\in \mathcal{A}_{m, k},\\ {\bf p}\in \mathcal{B}_s({\bf v})}} C_{{\bf p}} \int \lvert \big( D^{ {\bf v}} f\big)\,   \prod_{i=1}^n \prod_{j=1}^{\lvert {\bf v}_i \rvert} (\partial_x^{{\bf p}^{(i)}_{j}}w_{i}+z_{i} \partial_x^{{\bf p}^{(i)}_{j}}   h_{i} ) \\
&\qquad \qquad\times\Big(\sum_{j=1}^{\lvert {\bf v}_{\xi} \rvert}  \prod_{b\neq j} (\partial_x^{{\bf p}^{({\xi})}_{b}}w_{{\xi}}+z_{{\xi}} \partial_x^{{\bf p}^{({\xi})}_{b}}   h_{{\xi}})   \partial_x^{{\bf p}_j^{({\xi})}} h_{\xi}   \Big)
\,\partial_x^s g(x)\,\rvert\,dx\\
&+ \sum_{i=1}^n \int \lvert (\partial_{i\,{\xi}} f)\,h_{\xi}\,(\partial_x^{s}w_{i}+z_i \partial_x^{s}h_{i} )\,\partial_x^s g\,\rvert\,dx+  \int \lvert (\partial_{{\xi}} f)\partial_x^{s}h_{{\xi}}\,\partial_x^s g\, \rvert\,dx
 \end{aligned}
 \end{equation}
 is finite for some ${\xi}\in\{ 1,\dots, n \}$.
Fix $\lvert z_i \rvert<\min\{r/2\lVert h_i \rVert_{H^{s}(\T, \mathbb{C})}, r/2\}$ for $i=1, \dots, n$. First we bound the derivatives of $f$
\[
\lvert \big( D^{{\bf v}} f\big)\big(w_1+z_1 h_1, \dots, w_n+z_n h_n \big)\rvert_{L^{\infty}}<\infty \qquad \forall w_i\in B_{H^s(\T, \mathbb{C})}(0, r/2), \,\,h_i\in H^s(\T, \mathbb{C})
\]
since the derivatives of $f$ are analytic on $B_{\mathbb{C}}(0, r)\times \dots \times B_{\mathbb{C}}(0, r)$ and
\[
\lvert  w_i+z_i h_i  \rvert_{L^{\infty}}<r, \qquad i=1, \dots, n.
\]
Since $w_i$ and $h_i$ belong to $H^s(\T, \mathbb{C})$ then, for $1\le k \le m$, $1\le m \le s$, ${\bf v}\in \mathcal{A}_{m, k}$, ${\bf p}\in\mathcal{B}_m({\bf v})$ we have by Cauchy-Schwarz and Sobolev embeddings
\begin{align*}
&\int \lvert   \prod_{i=1}^n \prod_{j=1}^{\lvert {\bf v}_i \rvert} (\partial_x^{{\bf p}^{(i)}_{j}}w_{i}+z_{i} \partial_x^{{\bf p}^{(i)}_{j}}   h_{i} )\,h_{\xi}\,\partial_x^m g(x) 	\rvert\,dx\le \prod_{i=1}^n \prod_{j=1}^{\lvert {\bf v}_i \rvert}\lvert   (\partial_x^{{\bf p}^{(i)}_{j}}w_{i}+z_{i} \partial_x^{{\bf p}^{(i)}_{j}}   h_{i} ) \rvert_{L^{\infty}(\T, \mathbb{C})} \lVert h_{\xi} \rVert_{L^2(\T, \mathbb{C})} \lVert \partial_x^m g \rVert_{L^2(\T, \mathbb{C})}\\
&\le \prod_{i=1}^n \prod_{j=1}^{\lvert {\bf v}_i \rvert} (\lVert \partial_x^{{\bf p}^{(i)}_{j}}w_{i} \rVert_{H^1(\T, \mathbb{C})}+\lvert z_{i} \rvert \lVert \partial_x^{{\bf p}^{(i)}_{j}}   h_{i}\rVert_{H^1(\T, \mathbb{C})} ) \lVert h_{\xi} \rVert_{L^2(\T, \mathbb{C})} \lVert \partial_x^m g \rVert_{L^2(\T, \mathbb{C})}\le  r^m \, \lVert h_{\xi} \rVert_{L^2(\T, \mathbb{C})} \lVert  g \rVert_{H^s(\T, \mathbb{C})}
\end{align*}
and similarly
\begin{align*}
\int \lvert \prod_{i=1, i\neq \xi}^n \prod_{j=1}^{\lvert {\bf v}_i \rvert} (\partial_x^{{\bf p}^{(i)}_{j}}w_{i}+z_{i} \partial_x^{{\bf p}^{(i)}_{j}}   h_{i} )\,\Big(\sum_{j=1}^{\lvert {\bf v}_{\xi} \rvert}  \prod_{b\neq j} (\partial_x^{{\bf p}^{(\xi)}_{b}}w_{\xi}+z_{\xi} \partial_x^{{\bf p}^{(\xi)}_{b}}   h_{\xi})  & \partial_x^{{\bf p}_j^{(\xi)}} h_{\xi}   \Big)\,\partial_x^m g(x) \rvert\,dx\\
&\le m r^{m-1}\, \lVert h_{\xi} \rVert_{L^2(\T, \mathbb{C})} \lVert  g \rVert_{H^s(\T, \mathbb{C})}.
\end{align*}

%\begin{align*}
%\int \lvert \prod_{i=1}^k(\partial_x^{p_i} w_{\sigma(i)}+z_{\sigma(i)} \partial_x^{p_i}h_{\sigma(i)} )\, h_j\,\partial_x^m g(x) \rvert\,dx &\le \prod_{i=1}^k \lVert   \partial_x^{p_i}w_{\sigma(i)}+z_{\sigma(i)} \partial_x^{p_i}h_{\sigma(i)} \rVert_{L^{\infty}} \lVert h_j \rVert_{L^2} \lVert \partial_x^m g \rVert_{L^2}\\
%&\le \prod_{i=1}^k (\lVert w_{\sigma(i)} \rVert_{H^s}+\lvert z_{\sigma(i)} \rvert \lVert h_{\sigma(i)} \rVert_{H^{s}})  \lVert h_j \rVert_{L^2} \lVert  g \rVert_{H^s} \\
%&\le r^{k}  \lVert h_j \rVert_{L^2} \lVert  g \rVert_{H^s}
%\end{align*}
%and so we bounded \eqref{uno}, \eqref{due} and \eqref{tre} in $L^1$.
%By Cauchy-Schwarz
%\begin{align*}
%&\int \lvert \prod_{i=1}^k(\partial_x^{p_i} w_{\sigma(i)}+z_{\sigma(i)} \partial_x^{p_i}h_{\sigma(i)} )\,\partial_x^s g(x) \rvert\,dx \le \prod_{i=1}^{k-1}\lvert \partial_x^{p_i} w_{\sigma(i)}+z_{\sigma(i)} \partial_x^{p_i}h_{\sigma(i)}   \rvert_{L^{\infty}} \int \lvert  (\partial_x^{p_k} w_{\sigma(k)}+z_{\sigma(k)} \partial_x^{p_k}h_{\sigma(k)} )\,\partial_x^s g \rvert\, dx\\
%& \le  \prod_{i=1}^{k-1}\lVert \partial_x^{p_i} w_{\sigma(i)}+z_{\sigma(i)} \partial_x^{p_i}h_{\sigma(i)}   \rVert_{H^1} \lVert  (\partial_x^{p_k} w_{\sigma(k)}+z_{\sigma(k)} \partial_x^{p_k}h_{\sigma(k)} )  \rVert_{H^s} \lVert g \rVert_{H^s}.
%\end{align*}
We bounded the first three terms in \eqref{lucarelli}. The fourth and the fifth terms in \eqref{lucarelli} have similar bounds and the proof follows the arguments above; we remark only that the Cauchy-Schwarz inequality has to be applied to the $L^2$-product of $\partial_x^s g$ with $\partial_x^{{\bf p}_j^{(\xi)}} h_{\xi} $.
For the last two summands of \eqref{lucarelli} we have
\begin{align*}
\int \lvert h_r (\partial_x^{s}w_{i}+z_i \partial_x^{s}h_{i} )\,\partial_x^s g \rvert\,dx &\le \lvert h_r \rvert_{L^{\infty}(\T, \mathbb{C})} \lVert \partial_x^{s}w_{i}+z_i \partial_x^{s}h_{i} \rVert_{L^2(\T, \mathbb{C})}\lVert \partial_x^s g  \rVert_{L^2(\T, \mathbb{C})}\\
&\le  \lvert h_r \rvert_{L^{\infty}(\T, \mathbb{C})}  (\lVert w_{i} \rVert_{H^s(\T, \mathbb{C})}+\lvert z_i \rvert\lVert h_{i} \rVert_{H^s(\T, \mathbb{C})})\lVert  g  \rVert_{H^s(\T, \mathbb{C})} \le r\, \lVert h_r \rVert_{H^1(\T, \mathbb{C})} \lVert  g  \rVert_{H^s(\T, \mathbb{C})}
\end{align*}
and 
\[
\int \lvert \partial_x^{s}h_{\xi} \,\partial_x^s g \rvert\,dx\le \lVert h_{\xi}\rVert_{H^s(\T, \mathbb{C})} \lVert g \rVert_{H^s(\T, \mathbb{C})}.
\]

%
%and then the integrands
%Hence, if
%\begin{equation}\label{neighbo}
%\sup_{x\in \T} \lvert w_i(x)+z h_i(x) \rvert< r \quad \forall i=1, \dots, n,
%\end{equation}
%then, by the analiticity of $f$ and by dominated convergence theorem, 
%there exists the complex derivative
%\[
%\frac{d}{d z} L T_f(w_1+z h_1, \dots, w_n+z h_n)=\int_{\T} \left[\frac{d}{d z} f(w_1+z h_1, \dots, w_n+z h_n) \right]\,g(x)\,dx.
%\]
%Observe that condition \eqref{neighbo} is satisfied if, fixed $h_i\in H^s$, $\lvert z \rvert<\min\{\frac{r}{2\lVert h_i \rVert_{H^{s}}}, \frac{r}{2}\}$.
\end{proof}

\begin{lem}\label{secondlem}
Let $\sigma\in\mathbb{N}$ and $f\colon \mathbb{C}\to \mathbb{C}$ be analytic on a ball $B_{\mathbb{C}}(0, r)$. Then there exists a function $g\colon \mathbb{C}^{\sigma+1}\to \mathbb{C}$ analytic on $B_{\mathbb{C}}(0, r)\times \dots\times B_{\mathbb{C}}(0, r)$ such that $\big(\partial_x^{\sigma}\circ T_f \big)(w)$ is the restriction to $$w_0=w, \dots, w_{\sigma}=\partial_x^{\sigma} w$$ of the composition operator $T_g[w_0, \dots, w_{\sigma}]\colon H^s(\T, \mathbb{C})\times\dots \times H^s(\T, \mathbb{C}) \to H^s(\T, \mathbb{C})$ for $s>1/2$. Moreover $T_g$
is analytic on $B_{H^s(\T, \mathbb{C})}(0, r/2)\times \dots \times B_{H^{s}(\T, \mathbb{C})}(0, r/2)$ for $s>1/2$. 
%and it has a zero of order one at $w_{\sigma}=0$.
\end{lem}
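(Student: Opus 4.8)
The plan is to reduce the statement about $\partial_x^\sigma \circ T_f$ to Lemma \ref{NemistkyLemma} by writing the $\sigma$-th derivative of the composition $f(w)$ explicitly via the Fa\`a di Bruno formula. First I would recall that for a smooth (here analytic) scalar function $f$ and $w=w(x)$ one has
\[
\partial_x^\sigma \big( f(w) \big)=\sum_{k=1}^{\sigma}\big( \partial_z^k f\big)(w)\,\sum_{\substack{ j_1+\dots+j_k=\sigma,\\ j_1,\dots,j_k\ge 1 }} c_{j_1,\dots,j_k}\,(\partial_x^{j_1} w)\cdots(\partial_x^{j_k} w),
\]
for suitable combinatorial constants $c_{j_1,\dots,j_k}\in\mathbb{N}$ (the case $\sigma=0$ being simply $f(w)$). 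The right-hand side is, by inspection, a finite sum of products of the functions $\partial_z^k f(w)$ with monomials in the symbols $\partial_x^{j} w$ for $0\le j\le \sigma$. Reading each $\partial_x^{j} w$ as an independent variable $w_j$, this suggests the explicit candidate
\[
g(w_0,\dots,w_\sigma):=\sum_{k=1}^{\sigma}\big(\partial_z^k f\big)(w_0)\,\sum_{\substack{ j_1+\dots+j_k=\sigma,\\ j_1,\dots,j_k\ge 1 }} c_{j_1,\dots,j_k}\,w_{j_1}\cdots w_{j_k},
\]
and by construction $\big(\partial_x^\sigma\circ T_f\big)(w)$ is exactly the restriction of $T_g[w_0,\dots,w_\sigma]$ to $w_0=w,\dots,w_\sigma=\partial_x^\sigma w$.

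Next I would verify that $g$ has the required analyticity. Since $f$ is analytic on $B_{\mathbb{C}}(0,r)$, each derivative $\partial_z^k f$ is analytic on $B_{\mathbb{C}}(0,r)$ as well (analyticity is preserved under differentiation, with the same radius of convergence). Each summand of $g$ is a product of $\partial_z^k f(w_0)$, which is analytic in $w_0$ on $B_{\mathbb{C}}(0,r)$ and independent of $w_1,\dots,w_\sigma$, with a monomial in $w_1,\dots,w_\sigma$, which is entire. A finite product of functions analytic on the polydisc $B_{\mathbb{C}}(0,r)\times\dots\times B_{\mathbb{C}}(0,r)$ is analytic there, and a finite sum of such functions is analytic; hence $g\colon\mathbb{C}^{\sigma+1}\to\mathbb{C}$ is analytic on $B_{\mathbb{C}}(0,r)^{\sigma+1}$, as claimed.

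Finally, the analyticity of the composition operator $T_g$ on $B_{H^s(\T,\mathbb{C})}(0,r/2)^{\sigma+1}$ for $s>1/2$ is immediate from the previous lemma: applying Lemma \ref{NemistkyLemma} with $n=\sigma+1$ to the function $g$ gives exactly that $T_g$ is weakly analytic on the stated polydisc, and maps into $H^s$ by the algebra property of $H^s$ for $s>1/2$. I expect the only genuinely delicate point to be bookkeeping: one must check that every monomial $w_{j_1}\cdots w_{j_k}$ appearing in $g$ uses only indices $j_i\in\{1,\dots,\sigma\}$ (so that the restriction $w_j=\partial_x^j w$ really involves derivatives of order at most $\sigma$) and that $g$ is well-defined as a genuine function on $\mathbb{C}^{\sigma+1}$ rather than merely a formal expression. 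Both follow from the structure of the Fa\`a di Bruno formula, in which every factor carries a derivative order between $1$ and $\sigma$ and the orders sum to $\sigma$. Strictly speaking one should note that weak analyticity together with local boundedness yields analyticity in the Banach-space sense on the smaller polydisc, which is the standard passage recalled in the Appendix; I would cite that to conclude.
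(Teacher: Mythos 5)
Your proof is correct and follows essentially the same route as the paper: both expand $\partial_x^{\sigma}f(w)$ via the Fa\`a di Bruno/chain rule formula, read off the explicit candidate $g$ by treating each $\partial_x^{j}w$ as an independent variable, observe that $g$ inherits analyticity on the polydisc from the derivatives $f^{(k)}$, and then combine Lemma \ref{NemistkyLemma} (weak analyticity) with local boundedness via the algebra property of $H^s(\T,\mathbb{C})$, $s>1/2$, to conclude analyticity of $T_g$. The only difference is presentational: you spell out the bookkeeping on the indices $j_i\in\{1,\dots,\sigma\}$ and the appeal to Theorem \ref{TruboTeo}, which the paper leaves implicit.
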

\begin{proof}
By the chain rule
\begin{equation}\label{faadibruno}
\partial_x^{\sigma} f(w)=\sum_{k=1}^{\sigma} \sum_{p_1+\dots+p_k=\sigma} C_k f^{(k)}(w)(\partial_x^{p_1} w) \dots (\partial_x^{p_k} w), 
\end{equation}
hence the function $\partial_x^{\sigma}\circ T_f$ is the restriction of the composition operator $T_g$ on $w_0=w, \dots, w_{\sigma}=\partial_x^{\sigma} w$ for a function $g=g(z_0, \dots, z_{\sigma})\colon \mathbb{C}^{\sigma+1}\to \mathbb{C}$ which has the form
\[
\sum_{k=1}^{\sigma} \sum_{p_1+\dots+p_k=\sigma} C_k f^{(k)}(z_0)z_{p_1} \dots z_{p_k}
=\sum_{k=1}^{\sigma} \tilde{C}_k f^{(k)}(z_0) z_1^{\alpha_1^{(k)}}\dots z_{\sigma}^{\alpha_{\sigma}^{(k)}}
\]
for some $\alpha_i^{(k)}\in \mathbb{N}$ and some positive constants $\tilde{C}_k$.\\
The function $g$ is clearly analytic on $B_{\mathbb{C}}(0, r)\times \dots\times B_{\mathbb{C}}(0, r)$ and we have the weakly analyticity of $T_g$ by using Lemma \ref{NemistkyLemma}. The fact that $T_g$ is locally bounded as operator from $H^s(\T, \mathbb{C})\times\dots \times H^s(\T, \mathbb{C})$ to $H^s(\T, \mathbb{C})$ follows trivially by the following estimate, obtained by exploiting the algebra property of $H^s(\T, \mathbb{C})$ with $s>1/2$ and the analyticity of $f$,
\[
\lVert \partial_x^{\sigma} f(w) \rVert_{H^s(\T, \mathbb{C})}\le \sum_{k=1}^{\sigma} \sum_{p_1+\dots+p_k=\sigma} C_k \lVert f^{(k)}(w_0) \rVert_{H^s(\T, \mathbb{C})} \lVert w_{p_1}\rVert_{H^s(\T, \mathbb{C})} \dots \lVert w_{p_k}\rVert_{H^s(\T, \mathbb{C})}.
\]
%and by Lemma \ref{compocompo} (recall that $f^{(k)}$ is infinitely differentiable)
%\[
%\lVert f^{(k)}(w_0) \rVert_{H^s}\le C(s) \lVert f^{(k)} \rVert_{C^s}(1+\lVert w_0\rVert_{H^s}).
%\]
%Note that by \eqref{faadibruno} the highest order derivative $\partial_x^{\sigma} w$ appears linearly in every summand and so the last assertion of the statement holds.
\end{proof}

%\begin{remark}
%Let $g\colon \mathbb{C}^{\sigma+1}\to\mathbb{C}$ be a smooth function. Then, if $w\in H^{s+\sigma}$, by Lemma \ref{compocompo}
%\[
%\lVert T_g(w, \dots, \partial_x^{\sigma}w) \rVert_s\le C(s) \lVert g \rVert_{C^s}(1+\lVert w \rVert_{s+\sigma})<\infty.
%\]
%\end{remark}

The function $p(y)=-(\mathtt{c}+y)^{1/3}$ is analytic in $\{ y\in \mathbb{C} : \lvert y \rvert< \lvert \mathtt{c} \rvert \}$, hence by Lemma \ref{NemistkyLemma} the map $p(w)=T_p[w]=-(\mathtt{c}+w)^{1/3}$ defined in \eqref{emme} is weakly analytic in $B_{H^s(\T, \mathbb{C})}(0,  \lvert \mathtt{c} \rvert /2)$. Moreover $T_p$ is locally bounded, hence $p(w)$ is analytic in $B_{H^s(\T, \mathbb{C})}(0,  \lvert \mathtt{c} \rvert /2)$ and it can be represented by its Taylor expansion at the origin
\begin{equation}\label{taylor}
p(w)=\sum_{n\geq 0} \frac{p^{(n)}(0)}{n!}\,w^n.
\end{equation}

\begin{remark}\label{realta}
We note that the function $p(y)=-(\mathtt{c}+y)^{1/3}$ is real on real, namely it assumes real valued when it is restricted to the real line. Then its restriction to $\mathbb{R}$ is a real analytic function.\\
As a consequence, it is easy to see that the composition operator $T_p$ is real on real and then it is analytic on $H^s:=H^s(X, \mathbb{R})$, $X=\T, \mathbb{R}$.
\end{remark}

\subsubsection{Class of differential polynomials}\label{diffpol}
We introduce a class of differential polynomials to which the Taylor expansion of the $\rho^{(n)}$ belongs. The particular form of these polynomial results to be fundamental for the Sobolev estimates on the constants of motion which we construct.

\medskip

%The Taylor series of analytic operators. These series have a particular form which is fundamental to prove some estimates which give us the analiticity of the constants of motion.
%Hence it is useful to study the structure of these formal power series of severable variables.\\
We define
\begin{equation}\label{definterval}
\mathcal{J}_n^q:=\{ \alpha\in \mathbb{N}^{\{0, \dots, n\}} : \sum_{i=0}^n \alpha_i=q, \,\, \sum_{i=0}^n i \alpha_i \le n \}
\end{equation}
and for $\alpha\in \mathcal{J}_n^q$, $w=(w_0, \dots, w_n)$, $w_i:=\partial_x^i w$ the monomial
\begin{equation}\label{defmonomial}
w^{\alpha}=\prod_{i=0}^n w_i^{\alpha_i}=\prod_{i=0}^n (\partial_x^i w)^{\alpha_i}.
\end{equation}
We denote by $\calP_n^q$ the class of formal homogenous polynomials of degree $q$ and order $n$ of the form
\[
f=\sum_{\alpha\in \mathcal{J}_n^q} f_{\alpha} w^{\alpha}, \quad f_{\alpha}\in \mathbb{C}.
\]
We denote by $\calP_n^{\le q}$ the class of formal polynomials of degree at most $q$ and order $n$ of the form
\[
f=\sum_{k=0}^q f_k, \quad f_k\in\mathcal{P}_n^k.
\]
We denote by $\Sigma_n^q$ the class of formal power series of degree at least $q$ of the form
\[
f=\sum_{k=q}^{\infty}f_{n, k}, \quad f_{n, k}\in \calP_n^k.
\]
The Taylor series \eqref{taylor} is an element of $\Sigma_0^0$.

%
%Hence the function $\partial_x^{\sigma} p(w)$, $\sigma\geq 0$, is analytic in the interior of $B_{H^{s-\sigma}}(0, 1)$ for $s\geq s_0+\sigma$.
%Thus, for $w$ small enough in $H^{s-2}(\T_x)$ for $s\geq s_0+2$, $\rho^{(0)}$ and $\rho^{(1)}$ are analytic and we can expand them in power series of $w$.

%
\begin{lem}\label{polem}
Let $\sigma, n, m, q, r\in\mathbb{N}$. Then
\begin{enumerate}
\item If $f\in \calP_n^q$, $g\in\calP_m^r$ then 
\[
f+g\in \calP_{\max\{n, m \}}^{\le \max\{ q, r \}}, \quad f\,g\in\calP_{\max\{n, m \}}^{\le q+r}.
\]
\item  The operator $\partial_x^{\sigma}$ maps $\calP_n^q$ into $\calP_{n+\sigma}^q$.
\end{enumerate}
\end{lem}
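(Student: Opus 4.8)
The plan is to verify each of the two closure statements directly from the definitions in \eqref{definterval} and \eqref{defmonomial}, tracking how the two grading quantities (degree $q=\sum_i\alpha_i$ and order-weight $\sum_i i\alpha_i\le n$) transform under the algebraic operations.

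For part 1, consider $f=\sum_{\alpha\in\mathcal{J}_n^q}f_\alpha w^\alpha\in\calP_n^q$ and $g=\sum_{\beta\in\mathcal{J}_m^r}g_\beta w^\beta\in\calP_m^r$. For the sum, I would simply note that every monomial $w^\alpha$ appearing in $f$ has $\sum_i i\alpha_i\le n\le\max\{n,m\}$ and degree $q\le\max\{q,r\}$, and symmetrically for $g$; hence after collecting monomials of equal degree, $f+g$ is a sum of homogeneous pieces each lying in some $\calP_{\max\{n,m\}}^{k}$ with $k\le\max\{q,r\}$, which is exactly membership in $\calP_{\max\{n,m\}}^{\le\max\{q,r\}}$. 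For the product, a monomial of $f\,g$ has the form $w^{\alpha}w^{\beta}=w^{\alpha+\beta}$; its degree is $\sum_i(\alpha_i+\beta_i)=q+r$, and its order-weight is $\sum_i i(\alpha_i+\beta_i)=\sum_i i\alpha_i+\sum_i i\beta_i\le n+m\le\max\{n,m\}+\max\{n,m\}$. Here I should be careful: the claim states $fg\in\calP_{\max\{n,m\}}^{\le q+r}$, so the order index is $\max\{n,m\}$, not $n+m$. I expect this is because highest derivative index appearing in any factor of $w^{\alpha+\beta}$ is at most $\max\{n,m\}$ (since $\alpha$ involves only indices $\le n$ and $\beta$ only indices $\le m$), so the product monomial $w^{\alpha+\beta}$ uses derivatives up to order $\max\{n,m\}$; one checks $\sum_i i(\alpha_i+\beta_i)\le\max\{n,m\}\cdot\sum_i(\alpha_i+\beta_i)$ is too weak, so the correct reading is that the order index $n$ in the definition of $\calP_n^q$ bounds the \emph{highest derivative index}, and both constraints of $\mathcal{J}$ must be re-examined. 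This reconciliation of the two possible interpretations of the order index is the main subtlety, and I would resolve it by fixing the precise meaning from \eqref{definterval} before finishing the product bound.

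For part 2, I would use the Leibniz/product rule applied to the monomial $w^\alpha=\prod_i(\partial_x^i w)^{\alpha_i}$. Differentiating once, $\partial_x w^\alpha=\sum_{\ell}\alpha_\ell(\partial_x^{\ell+1}w)(\partial_x^\ell w)^{\alpha_\ell-1}\prod_{i\neq\ell}(\partial_x^i w)^{\alpha_i}$; each resulting monomial has the same total degree $q$ (the number of factors is unchanged), while the order-weight increases by exactly $1$ (one factor $\partial_x^\ell w$ is replaced by $\partial_x^{\ell+1}w$). Iterating $\sigma$ times, every monomial of $\partial_x^\sigma w^\alpha$ has degree $q$ and order-weight at most $\sum_i i\alpha_i+\sigma\le n+\sigma$, and the highest derivative index appearing is at most $n+\sigma$; thus each such monomial lies in $\mathcal{J}_{n+\sigma}^q$, giving $\partial_x^\sigma f\in\calP_{n+\sigma}^q$. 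Since both statements reduce to bookkeeping of the two indices under multiplication and differentiation, the only real care needed is pinning down exactly which quantity the subscript $n$ controls in \eqref{definterval}, and then the verification is routine.
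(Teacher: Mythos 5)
Your bookkeeping strategy is exactly the paper's: the sum is dismissed as immediate, the product is handled by merging exponents, and the derivative statement is proved by expanding $\partial_x^\sigma w^\alpha$ monomial by monomial (the paper uses a Fa\`a di Bruno-type expansion where you iterate Leibniz; both track the same two quantities). Your part (2) is in fact slightly more complete than the paper's: you verify that each differentiation raises the weight $\sum_i i\alpha_i$ by exactly one, hence to at most $n+\sigma$, whereas the paper's proof only records that the resulting monomials involve the variables $w_0,\dots,w_{n+\sigma}$ and keep homogeneity degree $q$.

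The subtlety you flagged in the product step is real, and your diagnosis is the correct resolution. Under the literal definition \eqref{definterval}, which imposes $\sum_i i\alpha_i\le n$, the claim $f\,g\in\calP_{\max\{n,m\}}^{\le q+r}$ is false: $w_1\in\calP_1^1$, but $w_1\cdot w_1=w_1^2$ has weight $\sum_i i\alpha_i=2$, so $w_1^2\notin\calP_1^{\le 2}$; in general one only gets the additive rule $f\,g\in\calP_{n+m}^{\le q+r}$, which is precisely what your computation gives. The paper's own proof silently adopts the weaker reading you identified, namely that the subscript bounds the highest derivative index: its displayed identity
\[
w_0^{\alpha_0}\dots w_n^{\alpha_n}\, w_0^{\beta_0}\dots w_m^{\beta_m}=\prod_{i=0}^{n} w_i^{\alpha_i+\beta_i}\,w_{n+1}^{\beta_{n+1}}\dots w_m^{\beta_m} \qquad (m\ge n)
\]
only checks that the product involves derivatives of order at most $\max\{n,m\}$ and has degree $q+r$; the weight constraint is never verified. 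So the "reconciliation" you left open is not a gap you could have closed — it is an imprecision in the lemma as stated. Note that the weight constraint cannot simply be dropped from \eqref{definterval}, since Remark \ref{remarkaffine} (affineness in the top derivative, the key to Proposition \ref{LemmaStimeTaylor}) depends on it; but the later applications are consistent with your additive bound — e.g.\ the induction in Lemma \ref{lemmaron} concludes $\rho^{(n+2)}\in\Sigma^0_{n+3}$, and in \eqref{ordbyord1} the weights of the factors add to exactly $(k_1+1)+(k_2+1)=n+3$ — so the version of the product rule you actually proved is the one the rest of the paper needs.
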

\begin{proof}
\textit{Proof of $(1)$}: for the sum the proof is trivial. For the product, suppose that $m\geq n$, the claim follows by the fact that
\[
w_0^{\alpha_0}\dots w_n^{\alpha_n}\, w_0^{\beta_0}\dots w_m^{\beta_m}=\prod_{i=0}^{n} w_i^{\alpha_i+\beta_i}\,w_{n+1}^{\beta_{n+1}}\dots w_m^{\beta_m}
\]
where $\lvert\alpha \rvert=q$ and $\lvert \beta \rvert=r$.

\vspace{0.8em}

\textit{Proof of $(2)$}:
clearly it is sufficient to look at the action of $\partial_x^{\sigma}$ on the monomials.
Fixed $i\in\mathbb{N}$, we have that
\[
\partial_x^p w_i=w_{i+p} \quad \mbox{for}\,\,p\in\mathbb{N}
\]
and by the chain rule
\[
\partial_x^{j} w_i^{\alpha_i}=\sum_{k=1}^j \sum_{p_1+\dots+p_k=j} C_k w_i^{\alpha_i-k} (\partial_x^{p_1} w_i)\dots (\partial_x^{p_k} w_i)=\sum_{k=1}^j \sum_{p_1+\dots+p_k=j} C_k w_i^{\alpha_i-k} w_{i+p_1}\dots  w_{i+p_k}
\]
is a function of variables $w_i, \dots, w_{i+j}$. It is easy to see that, in these variables, the degree of homogeneity has not been changed, namely it is already $\alpha_i$. Hence
\[
\partial_x^{\sigma} w^{\alpha}=\sum_{j_0+\dots+j_n=\sigma } C_{j_0 \dots j_n} (\partial_x^{j_0} w_0^{\alpha_0})\dots (\partial_x^{j_n} w_n^{\alpha_n})
\]
is a function of the variables $w_0, \dots, w_{n+\sigma}$ with homogeneity degree $\alpha_0+\dots+\alpha_n= q$.
\end{proof}
%By \eqref{taylor} we have that $p(w)$ can be represented in a ball contained in $B_{H^s}(0, 1/2)$ by an element of $\Sigma_0^0$. \\
%By Lemma ... $\partial_x^{\sigma} p$ is analytic as a function of the variables $w, \dots, \partial_x^{\sigma} w$, hence it can be represented by its Taylor series, which is an element of $\Sigma_{\sigma}^0$.
%%
%
The following remark is fundamental for getting bounds on the Sobolev norms of the constants of motion.
\begin{remark}\label{remarkaffine}
Let $f\in \Sigma_n^q$ for some $n, q\geq 0$. We note that $f$ is necessarily affine in the variable $w_n=\partial_x^n w$, namely in the highest order derivative. Indeed, $\mathcal{M}(\alpha)=n=\sum_{i=0}^n i \alpha_i$ for $\alpha\in \mathcal{J}_n^q$, hence
\[
\alpha_0=q-1, \quad \alpha_i =0, \quad \alpha_n=1, \quad i=1, \dots, n-1, \qquad \mbox{or} \qquad \alpha_n=0.
\]
 So
\[
f=w_n\,\sum_{k\geq q} f_{(k-1, 0, \dots, 0, 1)} w^k+\sum_{k\geq q} \sum_{\substack{\alpha\in\mathcal{J}_n^q,\\ \alpha_n=0}} f_{\alpha} w^{\alpha}.
\]
\end{remark}

\subsection{The structure of the conserved quantities $\rho^{(n)}$}

The functions $\rho^{(n)}$ are given by sums and products of $p$ and its $x$-derivatives. We want to show that the $\rho^{(n)}$ are composition operators for analytic and real on real functions of $w$ and its derivatives and that the Taylor expansions of these operators belong to some $\Sigma_n^q$ (recall the definitions given in Section \ref{diffpol}). This fact will allow to prove the bound \eqref{stimafondam} and consequently the estimates in Theorem \ref{costantiMotoDP}-$(iii)$.

\begin{remark}
Given two composition operators $T_f$ and $T_g$ we have that $T_f+T_g=T_{f+g}$ and $T_{f}T_g=T_{f g}$. Hence if $f, g$ are analytic we can apply to $T_f+T_g=T_{f+g}$ and $T_{f}T_g=T_{f g}$ Lemmata \ref{NemistkyLemma}, \ref{secondlem}.
\end{remark}

\begin{lem}\label{lemmaron}
Fix $n\in\mathbb{N}$. Then there exists a function $f_n\colon \mathbb{C}^{n+2}\to \mathbb{C}$ real on real, analytic on $B_{\mathbb{C}}(0,  \lvert \mathtt{c} \rvert)\times \dots\times B_{\mathbb{C}}(0,   \lvert \mathtt{c} \rvert)$ such that
%\[
%f_n(z_0, \dots, z_{n+1})=\sum_k g_n(z_0) z_1^{k_1}\dots z_{n+1}^{k_{n+1}}
%\] 
%where $g_n \colon \mathbb{C}\to \mathbb{C}$ is analytic on $B_{\mathbb{C}}(0,  \mathtt{c})$ and
$$\rho^{(n)}(w)=T_{f_n}[w, w_x, \dots, \partial_{x}^{n+1} w].$$
Moreover the Taylor series of $T_{f_n}$ restricted to $w_0=w, \dots, w_{n+1}=\partial_x^{n+1} w$ belongs to $\Sigma_{n+1}^{0}$. 
\end{lem}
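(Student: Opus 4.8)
The plan is to argue by strong induction on $n$, using the recursion relations \eqref{ordbyord}--\eqref{ordbyord1} together with the closure properties of the classes $\calP_n^q$ and $\Sigma_n^q$ (Lemma \ref{polem}) and of composition operators (the Remark preceding this lemma, Lemma \ref{NemistkyLemma} and Lemma \ref{secondlem}). The starting observation is that, since $m=\mathtt{c}+w$ (recall \eqref{emme}) and $|\mathtt{c}+w|>0$ for $|w|<|\mathtt{c}|$, the function $p=-(\mathtt{c}+w)^{1/3}$ never vanishes on $B_{\mathbb{C}}(0,|\mathtt{c}|)$; hence $p,\,p^{-1},\,p^{-2},\,p^{-3}$ are all analytic and real on real on this ball, and their Taylor series \eqref{taylor} lie in $\Sigma_0^0$.

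For the base cases I would read off from \eqref{iprimi} that $\rho^{(0)}=-(p'(w)/p(w))\,w_1$ and that $\rho^{(1)}$ is a polynomial in $w_1,w_2$ with coefficients that are analytic functions of $w_0$ built from $p^{-1},p^{-2},p^{-3}$. A direct inspection shows $\rho^{(0)}\in\Sigma_1^0$ and $\rho^{(1)}\in\Sigma_2^0$, and that both are composition operators with analytic, real-on-real symbols on the corresponding polydisc of radius $|\mathtt{c}|$. This settles $n=0,1$.

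For the inductive step, assume the statement for all indices up to $n+1$. Solving \eqref{ordbyord1} for the top-index term (legitimate since $p^{-2}$ is analytic) gives
\begin{equation*}
\rho^{(n+2)}=\frac{1}{p^2}\Big[\rho^{(n)}-\rho^{(n)}_{xx}-3\!\!\sum_{k_1+k_2=n+1}\!\!p\,\rho^{(k_1)}\rho^{(k_2)}-\!\!\sum_{k_1+k_2+k_3=n}\!\!\rho^{(k_1)}\rho^{(k_2)}\rho^{(k_3)}-3\,\partial_x(\rho^{(n+1)}p)-3\!\!\sum_{k_1+k_2=n}\!\!\rho^{(k_1)}\rho^{(k_2)}_x\Big].
\end{equation*}
Every summand is obtained from the $\rho^{(k)}$ with $k\le n+1$ and from $p,p^{-2}$ by finitely many sums, products and applications of $\partial_x$; by the Remark before the lemma together with Lemmata \ref{NemistkyLemma} and \ref{secondlem}, each such operation preserves the property of being a composition operator with an analytic, real-on-real symbol on a polydisc of radius $|\mathtt{c}|$, so that $\rho^{(n+2)}$ inherits this property with symbol $f_{n+2}\colon\mathbb{C}^{n+4}\to\mathbb{C}$.

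It then remains to track the order, which I would do with Lemma \ref{polem} (extended from $\calP_n^q$ to $\Sigma_n^q$ in the obvious way). Using the inductive hypothesis $\rho^{(k)}\in\Sigma_{k+1}^0$, one checks term by term that no summand in the bracket exceeds order $n+3$, this bound being already attained by $\rho^{(n)}_{xx}$ and by $\partial_x(\rho^{(n+1)}p)$; multiplication by $p^{-2}\in\Sigma_0^0$ leaves the order unchanged. Hence $\rho^{(n+2)}\in\Sigma_{n+3}^0=\Sigma_{(n+2)+1}^0$, closing the induction. The main points to watch are the division by $p^{2}$—which is precisely where the non-vanishing of $p$ on $B_{\mathbb{C}}(0,|\mathtt{c}|)$, and hence the analyticity radius $|\mathtt{c}|$, enter—and the verification that the order accounting closes exactly at $n+3$ rather than growing, a closure that is guaranteed by the weighted-degree bookkeeping built into the definition of the classes $\Sigma_n^q$.
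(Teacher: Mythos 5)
Your proposal is correct and follows essentially the same route as the paper: strong induction on $n$ with base cases read off from \eqref{iprimi}, the inductive step driven by the recursion \eqref{ordbyord1}, closure of composition operators under sums, products and $\partial_x$ (via the Remark and Lemmata \ref{NemistkyLemma}, \ref{secondlem}), and the order bookkeeping via Lemma \ref{polem}. The only difference is cosmetic but welcome: you make explicit the division by $p^{2}$ (and hence the role of the non-vanishing of $p$ on $B_{\mathbb{C}}(0,|\mathtt{c}|)$), a step the paper's proof leaves implicit when it passes from the listed properties of the terms in \eqref{ordbyord1} to the conclusion for $\rho^{(n+2)}$.
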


\begin{proof}
Let us start from $\rho^{(0)}$ and $\rho^{(1)}$, and then we argue by induction on $n$.\\
Recalling that $p$ is analytic as function of the variable $w$, by Lemma \ref{secondlem} $p_x$ is analytic as function of the two variables $w$, $w_x$. By \eqref{iprimi}, we have that
$
\rho^{(0)} = \frac{1}{3} (-\mathtt{c}-w)^{-1} w_x,
$
and since the function $f_0:\mathbb{C}^2 \to \mathbb{C}$ given by 
$$f_0(z_0,z_1):= \frac{1}{3} (-\mathtt{c}-z_0)^{-1} z_1$$
is real on real and analytic in $B_{H^s}(0,|c|) \times B_{H^s}(0,|c|)$, by Lemma \ref{NemistkyLemma} and by local boundedness of the operator $T_{f_0}$ we get that $\rho^{(0)}$ is analytic in $B_{H^s}(0,|\mathtt{c}|/2) \times B_{H^s}(0,|\mathtt{c}|/2)$. From the explicit formula of $f_0$ one can deduce that the Taylor series of $T_{f_0}$ restricted to $w_0=w, w_1=w_x$ belongs to $\Sigma_{1}^{0}$.\\
 Similarly, we obtain that $\rho^{(1)}$ is real on real and analytic in the variables $w$, $w_x$ and $w_{xx}$, since it can be written as a composition operator for the following analytic function
\begin{align*}
f_1(z_0,z_1,z_2) &:= - \frac{1}{9} (-\mathtt{c}-z_0)^{-7/3} z_1^2 + \frac{2}{3} \left( - \frac{2}{9} \frac{ z_1^2  }{ (-\mathtt{c}-z_0)^{5/3} }-\frac{1}{3}(-\mathtt{c}-z_0)^{-2/3} z_2 \right) + \frac{1}{3} (-\mathtt{c}-z_0)^{-1/3}. 
\end{align*}
Furthermore, from the explicit formula of $f_1$ one can deduce that the Taylor series of $T_{f_1}$ restricted to $w_0=w, \dots, w_2=w_{xx}$ belongs to $\Sigma_{2}^{0}$.

Now we assume that the thesis holds for $\rho^{(k)}$, $k \leq n+1$, and we only have to control that $\rho^{(n+2)}$ depends only on $w$, $w_x$, $\ldots$, $\del_x^{n+3}w$. But by recalling \eqref{ordbyord1}, we just observe that
\begin{itemize}
\item $\rho^{(n)}(w) = T_{f_n}[w,\ldots,\del_x^{n+1}w]$ for some $f_n:\mathbb{C}^{n+2} \to \mathbb{C}$ analytic on $\times_{i=1}^{n+2} B_{\mathbb{C}}(0,|\mathtt{c}|)$, by inductive hypothesis; 
\item $\rho_{xx}^{(n)}(w) = T_{g_n}[w,\ldots,\del_x^{n+3}w]$ for some $g_n:\mathbb{C}^{n+4} \to \mathbb{C}$ analytic on $\times_{i=1}^{n+4} B_{\mathbb{C}}(0,|\mathtt{c}|)$, by Lemma \ref{secondlem} and by inductive hypothesis; 
\item $p(w) \rho^{(k_1)}(w) \rho^{(k_2)}(w) = T_{h_{k_1,k_2}}[w,\ldots,\del_x^{\max(k_1,k_2)+1}w]$ (where $k_1+k_2=n+1$), for some $h_{k_1,k_2}:\mathbb{C}^{\max(k_1,k_2)+2} \to \mathbb{C}$ analytic on $\times_{i=1}^{\max(k_1,k_2)+2} B_{\mathbb{C}}(0,|\mathtt{c}|)$, by inductive hypothesis and by the above remark; 
\item $\rho^{(k_1)}(w) \rho^{(k_2)}(w) \rho^{(k_3)}(w)= T_{l_{k_1,k_2,k_3}}[w,\ldots,\del_x^{\max(k_1,k_2,k_3)+1}w]$ ($k_1+k_2+k_3=n$), for some $l_{k_1,k_2,k_3}:\mathbb{C}^{\max(k_1,k_2,k_3)+2} \to \mathbb{C}$ analytic on $\times_{i=1}^{\max(k_1,k_2,k_3)+2} B_{\mathbb{C}}(0,|\mathtt{c}|)$, by inductive hypothesis and by the above remark; 
\item $\del_x(\rho^{(n+1)}p)(w)=(\del_x\rho^{(n+1)})(w) p(w) + \rho^{(n+1)}(w) p_x(w) = T_{m_{n+1}}[w,\ldots,\del_x^{n+3}w]$ for some $m_{n+1}:\mathbb{C}^{n+4} \to \mathbb{C}$ analytic on $\times_{i=1}^{n+4} B_{\mathbb{C}}(0,|\mathtt{c}|)$, by inductive hypothesis and by the above remark;
\item $\rho^{(k_1)}(w) \rho_x^{(k_2)}(w) = T_{v_{k_1,k_2}}[w,\ldots,\del_x^{\max(k_1,k_2+1)+1}w]$ ($k_1+k_2=n$), for some $v_{k_1,k_2}:\mathbb{C}^{\max(k_1,k_2+1)+2} \to \mathbb{C}$ analytic on $\times_{i=1}^{\max(k_1,k_2+1)+2} B_{\mathbb{C}}(0,|\mathtt{c}|)$, by inductive hypothesis and by the above remark.
\end{itemize}
Furthermore, again by using formula \eqref{ordbyord1}, we have that
\begin{itemize}
\item the Taylor series of $T_{f_n}$ restricted to $w_0=w, \dots, w_{n+1}=\partial_x^{n+1} w$ belongs to $\Sigma_{n+1}^{0}$, by inductive hypothesis;
\item the Taylor series of $T_{g_n}$ restricted to $w_0=w, \dots, w_{n+3}=\partial_x^{n+3} w$ belongs to $\Sigma_{n+3}^{0}$, by inductive hypothesis and by Lemma \ref{polem};
\item the Taylor series of $T_{ h_{k_1,k_2} }$ ($k_1+k_2=n+1$) restricted to $w_0=w, \dots, w_{\max(k_1,k_2)+1}=\partial_x^{\max(k_1,k_2)+1} w$ belongs to $\Sigma_{n+2}^0$, by inductive hypothesis and by Lemma \ref{polem};
\item the Taylor series of $T_{ l_{k_1,k_2,k_3} }$ ($k_1+k_2+k_3=n$) when restricted to $w_0=w, \dots, w_{\max(k_1,k_2,k_3)+1}=\partial_x^{\max(k_1,k_2,k_3)+1} w$ belongs to $\Sigma_{n+1}^0$, by inductive hypothesis and by Lemma \ref{polem};
\item the Taylor series of $T_{ m_{n+1} }$ restricted to $w_0=w, \dots, w_{n+3}=\partial_x^{n+3} w$ belongs to $\Sigma_{n+3}^0$, by inductive hypothesis and by Lemma \ref{polem};
\item the Taylor series of $T_{ v_{k_1,k_2} }$ ($k_1+k_2=n$) restricted to $w_0=w, \dots, w_{\max(k_1,k_2+1)+1}=\partial_x^{\max(k_1,k_2+1)+1} w$ belongs to $\Sigma_{n+2}^0$, by inductive hypothesis and by Lemma \ref{polem}.
\end{itemize}
This implies that the Taylor series of $T_{f_{n+2}}$ restricted to $w_0=w, \dots,w_{n+3}=\del_x^{n+3}w$ belongs to $\Sigma_{n+3}^0$.
\end{proof}
By Remark \ref{realta} the composition operators $\rho^{(n)}$ are real analytic and 
by Theorem \ref{TruboTeo} $\rho^{(n)}(w)$ can be represented by their Taylor expansion at the origin as functions of $w_0:=w, \dots, w_n:=\partial_x^n w$ if $w$ belongs to a sufficiently small ball of $H^{s+n}$ centered at the origin. For instance we can write
\begin{align}
p &= -\mathtt{c}^{1/3} - \frac{1}{ 3 \mathtt{c}^{2/3} } w + \frac{1}{ 9 \mathtt{c}^{5/3} } w^{2}+g_0(w), \label{p} \\
\rho^{(0)}&=-\frac{w_x}{3\mathtt{c}}+\frac{ww_x}{3\mathtt{c}^2}+g_1(w,w_x), \label{rho0} \\
\rho^{(1)}&=-\frac{1}{3\mathtt{c}^{1/3}}+\frac{1}{9\mathtt{c}^{4/3}} w-\frac{2}{9\mathtt{c}^{4/3}} w_{xx} -\frac{2}{27\mathtt{c}^{7/3}} w^{2} +\frac{8}{27\mathtt{c}^{7/3}} ww_{xx} +\frac{7}{27\mathtt{c}^{7/3}} w_{x}^{2}+g_{2}(w,w_x,w_{xx}), \label{rho1}
\end{align}
where $g_0$, $g_1$ and $g_2$ have a zero of order $3$ at the origin.

We remark that $\Gamma^{(n)}$ defined in \eqref{seqcos} is the integral (on the torus $\T$ or on $\mathbb{R}$) of elements of $\Sigma_n^q$. In the following lemma we prove an estimate on Sobolev spaces for this kind of functions.

\begin{prop}\label{LemmaStimeTaylor}

Fix $n\in\mathbb{N}$ and let $F(w):=\int f(w, \dots, \partial_x^n w)\,dx$, where $f\colon \mathbb{C}^{n+1}\to\mathbb{C}$ is real on real, analytic on $B_{\mathbb{C}}(0, r)\times \dots \times B_{\mathbb{C}}(0, r)$ for some $r>0$ and the Taylor expansion of $f(w, \dots, \partial_x^n w)$ at the origin belongs to $\Sigma_n^q$. Then $F\colon H^n\to\mathbb{C}$ is analytic on $B_{H^n}(0, r/2)$ and the following estimate holds
\begin{equation}\label{stimafondam}
\lvert F(w) \rvert\le C(n, r)\lVert w \rVert^q_{H^n}
\end{equation}
\end{prop}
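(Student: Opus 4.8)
The plan is to exploit the affine structure of elements of $\Sigma_n^q$ recorded in Remark \ref{remarkaffine}, which is exactly what tames the top-order derivative. First I would write the Taylor expansion of $f$ at the origin as $f=\sum_{k\ge q}f_{n,k}$ with $f_{n,k}\in\calP_n^k$, and invoke Remark \ref{remarkaffine}: the only multi-index in $\mathcal{J}_n^k$ with $\alpha_n\ge1$ is $(k-1,0,\dots,0,1)$, so for each $k$
\[
f_{n,k}(w_0,\dots,w_n)=a_k\,w_0^{\,k-1}\,w_n+b_k(w_0,\dots,w_{n-1}),
\]
where $b_k$ is homogeneous of degree $k$ in $w_0,\dots,w_{n-1}$ only and $a_k\in\C$. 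Summing gives $f=A(w_0)\,w_n+B(w_0,\dots,w_{n-1})$ with $A(w_0)=\sum_{k\ge q}a_k w_0^{\,k-1}$ and $B=\sum_{k\ge q}b_k$, both analytic on the relevant polydiscs as restrictions of $f$, and hence
\[
F(w)=\int \partial_x^n w\cdot A(w)\,dx+\int B(w,\dots,\partial_x^{n-1}w)\,dx.
\]
The gain is that the only factor of top order, $\partial_x^n w$, which is merely guaranteed to lie in $L^2$, now occurs \emph{linearly} and is multiplied by a function of $w$ alone.

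Next I would estimate the two pieces separately, homogeneous piece by homogeneous piece in the degree $k\ge q$. For any $i\le n-1$ the factor $\partial_x^i w$ lies in $H^{n-i}$ with $n-i\ge1>1/2$, so Sobolev embedding gives $\lvert \partial_x^i w\rvert_{L^\infty}\lesssim \|w\|_{H^n}$; thus each monomial of $b_k$ can be bounded in $L^1$ by keeping two factors in $L^2$ and the rest in $L^\infty$, yielding $\int\lvert b_k\rvert\,dx\lesssim \|w\|_{H^n}^{k}$. For the top-order piece I would use Cauchy--Schwarz,
\[
\Big\lvert\int \partial_x^n w\cdot A(w)\,dx\Big\rvert\le \|\partial_x^n w\|_{L^2}\,\|A(w)\|_{L^2}\le \|w\|_{H^n}\,\|A(w)\|_{L^2},
\]
and then factor $A(w)=w^{\,q-1}\tilde A(w)$ with $\tilde A$ analytic and bounded on the ball, so that $\|A(w)\|_{L^2}\lesssim \lvert w\rvert_{L^\infty}^{\,q-2}\,\|w\|_{L^2}\lesssim \|w\|_{H^n}^{\,q-1}$ and this term is $\lesssim\|w\|_{H^n}^{\,q}$; the generic degree-$k$ contribution is identical (or, alternatively, one integration by parts moves a derivative off $\partial_x^n w$ onto $A(w)$, reducing everything to an $L^2\times L^\infty\times L^2$ Hölder estimate and again giving $\lesssim\|w\|_{H^n}^{k}$). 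Finally, since $\|w\|_{H^n}<r/2$ on the ball and the coefficients decay geometrically by analyticity of $f$ on $B_\C(0,r)^{n+1}$, the series $\sum_{k\ge q}\|w\|_{H^n}^{k}$ converges and is dominated by its leading term, which yields \eqref{stimafondam}.

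For analyticity I would combine local boundedness with weak analyticity and conclude by the Banach-space criterion of Theorem \ref{TruboTeo}. Local boundedness on $B_{H^n}(0,r/2)$ is precisely the estimate just proved. Weak analyticity is checked by restricting to a complex line $z\mapsto F(w_*+zh)$, expanding the integrand $f\big(w_*+zh,\dots,\partial_x^n(w_*+zh)\big)$ in its absolutely convergent power series in $z$, and integrating term by term; the interchange is justified by dominated convergence exactly as in the proof of Lemma \ref{NemistkyLemma}, using that each $\partial_x^i(w_*+zh)$ with $i\le n-1$ is uniformly bounded in $L^\infty$ while the single top-order factor is controlled in $L^2$. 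The main obstacle throughout is genuinely that $\partial_x^n w\notin L^\infty$, so a naive Moser-type product estimate fails; it is the affine structure of $\Sigma_n^q$ from Remark \ref{remarkaffine} that isolates $\partial_x^n w$ as a single linear factor pairable by Cauchy--Schwarz, and this is the crux of the argument.
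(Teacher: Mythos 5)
Your proposal is correct and follows essentially the same route as the paper: both rest on Remark \ref{remarkaffine} to isolate $\partial_x^n w$ as a single linear factor, pair it with the analytic coefficient $A(w)$ via Cauchy--Schwarz while bounding the lower-order factors through Sobolev embedding $H^{n-i}\hookrightarrow L^\infty$, sum the series using the geometric decay of the Taylor coefficients of $f$, and then upgrade local boundedness plus weak analyticity (checked on complex lines with dominated convergence, as in Lemma \ref{NemistkyLemma}) to analyticity via Theorem \ref{TruboTeo}. The only cosmetic difference is that you package the expansion as $f=A(w_0)w_n+B(w_0,\dots,w_{n-1})$ before estimating, whereas the paper estimates the two families of monomials directly; the content is identical.
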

\begin{proof}
First we prove the bound \eqref{stimafondam}, which implies also that $F$ is locally bounded on $B_{H^n}(0, r)$. We note that, since the Taylor series of $f(w, \dots, \partial_x^n w)$ belongs to $\Sigma_n^q$, we can write by Remark \ref{remarkaffine}
\[
F(w)=\int \sum_{k\geq q }\sum_{\alpha\in\mathcal{J}_n^k} f_{\alpha} w^{\alpha}\,dx=\int_{\T }w_n\,\sum_{k\geq q} f_{(k-1, 0, \dots, 0, 1)} w^k\,dx+\int\sum_{k\geq q} \sum_{\substack{\alpha\in\mathcal{J}_n^k,\\ \alpha_n=0}} f_{\alpha} w^{\alpha}\,dx.
\]
Hence by Cauchy-Schwarz and Sobolev embeddings
\begin{equation}\label{briscola}
\begin{aligned}
\lvert F(w) \rvert &\le\sum_{k\geq q} \lvert f_{(k-1, 0, \dots, 0, 1)} \rvert  \int \lvert \partial_x^n w \rvert \lvert w^{k-1} \rvert\,dx+\sum_{k\geq q} \sum_{\substack{\alpha\in\mathcal{J}_n^k,\\ \alpha_n=0}}\lvert f_{\alpha} \rvert \int \lvert w^{\alpha} \rvert\,dx\\
&\le \sum_{k\geq q} \lvert f_{(k-1, 0, \dots, 0, 1)} \rvert  \lvert w \rvert_{L^{\infty}}^{k-2}  \int \lvert \partial_x^n w  \rvert\lvert w \rvert\,dx+\sum_{k\geq q} \sum_{\substack{\alpha\in\mathcal{J}_n^k,\\ \alpha_n=0}}\lvert f_{\alpha} \rvert \prod_{i=0}^{k-2} \lVert w \rVert_{H^{i+1}} \lVert w \rVert_{H^{n-1}}^2\\
&\le  \sum_{k\geq q} \lvert f_{(k-1, 0, \dots, 0, 1)} \rvert  \lvert w \rvert_{L^{\infty}}^{k-2}  \lVert w \rVert_{H^n}\lVert w \rVert_{L^2}+\sum_{k\geq q} \sum_{\substack{\alpha\in\mathcal{J}_n^k,\\ \alpha_n=0}}\lvert f_{\alpha} \rvert \lVert w \rVert_{H^{n}}^{k}\\
&\le  \sum_{k\geq q} \lvert f_{(k-1, 0, \dots, 0, 1)} \rvert  \lVert w \rVert_{H^1}^{k-1}  \lVert w \rVert_{H^n}+\sum_{k\geq q} \sum_{\substack{\alpha\in\mathcal{J}_n^k,\\ \alpha_n=0}}\lvert f_{\alpha} \rvert \lVert w \rVert_{H^{n}}^{k}\\ 
&\le \lVert w \rVert_{H^n}^q \Big( \sum_{k\geq q} \lvert f_{(k-1, 0, \dots, 0, 1)} \rvert   \lVert w \rVert^{k-q}_{H^n}+\sum_{k\geq q} \sum_{\substack{\alpha\in\mathcal{J}_n^k,\\ \alpha_n=0}}\lvert f_{\alpha} \rvert \lVert w \rVert_{H^{n}}^{k-q}\Big)\\
&\le  \lVert w \rVert_{H^n}^q \sum_{k\geq q} \sum_{\substack{\alpha\in\mathcal{J}_n^k}}\lvert f_{\alpha} \rvert r^{k-q}\le r^{-q} \lVert w \rVert_{H^n}^q \sum_{k\geq q} \sum_{\substack{\alpha\in\mathcal{J}_n^k}}\lvert f_{\alpha} \rvert r^{k}\le \frac{C(f, n, r)}{r^q}  \lVert w \rVert_{H^n}^q 
\end{aligned}
\end{equation}

%By Lemma \ref{compocompo} -(ii)
%\[
%\lVert f(w, \dots, \partial_x^n w) \rVert_{L^2}\le C(n) \lvert w \rvert^{q-1}_{L^{\infty}} \lVert w \rVert_{H^n}
%\]
% By the analiticity of $f$ we can write (recall \eqref{definterval}, \eqref{defmonomial})
%\[
%F(w)=\int_{\T} \sum_{k\geq q }\sum_{\alpha\in\mathcal{J}_n^k} f_{\alpha} w^{\alpha}\,dx.
%\]
%and
%\[
%\lvert F(w) \rvert \le \int_{\T} \lvert  f(w, \dots, \partial_x^n w)   \rvert dx\le \lVert f(w, \dots, \partial_x^n w)\rVert_{L^2}.
%\]
Now we prove the weakly analyticity of $F$. Since $\mathbb{C}^*=\mathbb{C}$ it is sufficient to show that for every $w\in B_{H^s}(0, r/2)$ and $h\in H^s$ there exists $a=a(h)>0$ such that the function $z\mapsto F(w+z h)$ is analytic as function of a complex variable in $B_{\mathbb{C}}(0, a)$. \\
If $\lvert z \rvert<\min\{  \frac{r}{2 \lVert h \rVert_{H^n}} , \frac{r}{2} \}$ then
\begin{equation}\label{sharp}
\sup_{x}\, \big(\lvert \partial_x^i w \rvert+ \lvert z \partial_x^i h \rvert\big)<r, \quad i=0, \dots, n-1
\end{equation}
since $w\in H^n$. The proof follows the strategy adopted in the proof of Lemma \ref{NemistkyLemma}, namely we isolate the terms with the pair of functions in the integrands with the highest order of derivatives and we apply the Cauchy-Schwarz inequality to their $L^2$-scalar product. We remark that is fundamental, as for obtaining the bound \eqref{briscola}, that $\partial_x^n w$ appears linearly in the Taylor expansion of $F$. Indeed by this fact it is sufficient to require the condition \eqref{sharp} for $i \le n-1$ and the loss of regularity due to the Sobolev embedding does not force us to require more smoothness on $w$ than $w\in H^n$.
 Eventually we have 
\begin{align*}
\frac{d}{d z}  F(w+z h) &=\int (w_n+z h_n)\,\frac{d}{d z}\sum_{k\geq q} f_{(k-1, 0, \dots, 0, 1)} (w+z h)^k\,dx\\
&+\int h_n \,\sum_{k\geq q} f_{(k-1, 0, \dots, 0, 1)} (w+z h)^k\,dx\\
&+\int \frac{d}{dz}\sum_{k\geq q} \sum_{\substack{\alpha\in\mathcal{J}_n^k,\\ \alpha_n=0}} f_{\alpha} (w+z h)^{\alpha}\,dx
%&=\int_{\T }(w_n+z h_n)\,\frac{d}{d z}\left[\sum_{k\geq q} f_{(k-1, 0, \dots, 0, 1)} \Big(\sum_{m=0}^k \binom{k}{m} w^{k-m} (z h)^m \Big) \right] \,dx\\
%&+\int_{\T}h_n \,\sum_{k\geq q} f_{(k-1, 0, \dots, 0, 1)} (w+z h)^k\,dx\\
%&+\int_{\T} \frac{d}{dz}\sum_{k\geq q} \sum_{\substack{\alpha\in\mathcal{J}_n^k,\\ \alpha_n=0}} f_{\alpha} \Big(\prod_{i=0}^{n-1}\sum_{m_i=0}^{\alpha_i} \binom{\alpha_i}{m_i} w^{\alpha_i-m_i} (z h)^{m_i}\Big)\,dx\
\end{align*}
and these derivatives exist by the analyticity of $f$ on $B_{\mathbb{C}}(0, r)\times \dots \times B_{\mathbb{C}}(0, r)$.
\end{proof}

%\begin{prop}
%Let $F(w)=\int_{\T} A(w, w_, \dots, \partial_x^n w)\,dx$ where 
%$$A(w_0, \dots, w_n)=B_1(w_0, \dots, w_{n-1})+B_2 (w_0, \dots, w_{n-1})\,w_n$$
%with $B_1, B_2\in \Sigma_{n-1}^q(r)$, $r\le 1$, $n\geq 1$ and $q\geq 0$. Then $F$ is analytic on $B_{H^n}(0, r/2)$.
%\end{prop}
%\begin{proof}
%Lemma \ref{LemmaStimeTaylor} implies that $F$ is locally bounded in $B_{H^n}(0, r)$. We have to prove the weakly analyticity. Since $\mathbb{C}^*=\mathbb{C}$ it is sufficient to show that for every $w\in B_{H^s}(0, r/2)$ and $h\in H^s$ there exists $a=a(h)>0$ such that the function $z\mapsto F(w+z h)$ is analytic as function of a complex variable in $B_{\mathbb{C}}(0, a)$.\\
%If $\lvert z \rvert<\min\{  \frac{r}{2 \lVert h \rVert_{H^n}} , \frac{r}{2} \}$ then
%\[
%\sup_{x\in \T} \lvert \partial_x^i w+z \partial_x^i h \rvert<r, \quad i=0, \dots, n-1
%\]
%and we conclude as in Lemma \ref{NemistkyLemma}.
%\end{proof}

Now we prove two facts (recall \eqref{seqcos} for the definition of $\Gamma^{(n)}$):
\begin{itemize}
\item the quadratic terms in the expansion of $\Gamma^{(n)}$ have a particular form ;
\item the cubic remainder of the expansion of $\Gamma^{(n)}$ does not contain derivatives of $w$ of order greater than the ones appearing in the quadratic part (see Section \ref{appsec}). We will do that by showing that the coefficient of the quadratic part associated to the monomials containing the highest number of derivatives is non-zero.
\end{itemize}

\medskip

%We now study the series of constants of motion $\Gamma^{(n)}$ given by \eqref{seqcos}.
%in particular we want to analyse their quadratic part. 

\begin{remark}\label{remarkone}
We point out the following properties of differential polynomial in $\calP_{n}^{k}$.
\begin{itemize}
\item[$(i)$] Let $g_0\in \calP_{n}^{1}$ for some $n$, and recall that $\int w\,dx=0$. Then one has
\[
\int g_0 \,dx=0;
\]
\item[$(ii)$] let $f$ be a polynomial of degree $2$ depending on the derivatives of $w$ of order exactly $n=2k+1$ for $k\in \N$, then 
\[
\int f\,dx=0.
\]
Indeed, by \eqref{definterval}, \eqref{defmonomial} we need to show that
\[
\int (\del_{x}^{k_{1}}w)(\del_{x}^{k_{2}}w)dx=0,
\]
when $k_1+k_2=2k+1$ and at least one between $k_1$ and $k_2$ is $\geq1$. Assume $k_2\geq1$. Hence by integrating by parts
we have, for $\s=1$ or $\s=-1$
\[
\int (-1)^{\s}(\del_{x}^{k}w)(\del_{x}^{k+1}w)dx=
\int (-1)^{\s}\del_{x}[(\del_{x}^{k}w)^{2}]dx=0;
\]
\item[$(iii)$] by the above computations for $n=2k$, $k\geq0$, 
\begin{equation}\label{costpari}
\Gamma^{(n)}(w)=\int g^{(n)}(w)dx,
\end{equation}
for some $g$ belonging to the class $\Sigma_{n+1}^{3}$, 
namely $\Gamma^{(n)}$ has a zero of order three at the origin. \\
On the other hand for $n=2k+1$, $k\geq0$, we simply have that
\begin{equation}\label{costdispari}
\Gamma^{(n)}(w)=\int f_{2}^{(n)}+h^{(n)}(w)dx,\qquad h^{(n)}(w)\in\Sigma^{3}_{n+1}, \quad f_{2}^{(n)}\in \calP_{n+1}^2.
\end{equation}
More precisely $f_2^{(n)}$ has the form
\begin{equation}\label{geneform}
f_{2}^{(n)}=\sum_{p=0}^{n+1}\sum_{k_1+k_2=p}(\del_{x}^{k_1}w)(\del_{x}^{k_2}w)c^{k_1,k_2}_{n}.
\end{equation}

\end{itemize}
\end{remark}

In the following Sections we analyse precisely the form
of $c^{k_1,k_2}_{n}$.

\subsection{Computation of $\Gamma^{(n)}$ for $n$ odd} \label{appsec}

Now we want to derive some explicit expression for the coefficients 
of the quadratic part of the functions $\Gamma^{(n)}$ ($n \in \mathbb{N}$ is odd)
 introduced in \eqref{seqcos}; more precisely, by recalling the definitions of Section \ref{diffpol} and \eqref{costdispari}-\eqref{geneform}, we want to compute 
the coefficients $c_{n}^{k_1,k_2}$ of $f_2^{(n)}$.

\subsubsection{Coefficients of the linear terms} \label{linsubsec}

We begin by computing the coefficients $c_{n}:=c_{n}^{n+1}$ of the linear terms, since this will be useful for the computation of the coefficients of 
the quadratic terms. Consider the recursion relation \eqref{ordbyord1} between 
the $\rho^{(n)}$; since $p \in \Sigma^0_0$ and that 
$p=-\mathtt{c}^{1/3}+\calO(w)$ for small $|w|$, 
the coefficients in front of the leading order linear term of $\rho^{(n+2)}p$ is
 proportional to the coefficient of maximal order of the linear term of $\rho^{(n+2)}$. 
Hence, if we write only the coefficients of maximal order for the linear terms, 
we get
\begin{align}
-c_{n} &= \mathtt{c}^{2/3} \, c_{n+2} -\mathtt{c}^{1/3} 3 c_{n+1}, \nonumber \\
c_{n+2} &= -\mathtt{c}^{-2/3} c_{n}+3 \mathtt{c}^{-1/3} c_{n+1}, n \geq 0. \label{lincoeff}
\end{align}
Now, recall that by \eqref{rho0} and \eqref{rho1} we have that 
$c_0=-\frac{1}{3\mathtt{c}}$ and $c_1=-\frac{2}{9\mathtt{c}^{4/3}}$. 
One can check that 
\begin{align}
c_m = d_1 a^{m} + d_2 b^{m}, \; & m \geq 0, \label{cm} \\
a:= a(\mathtt{c}) = \frac{3+\sqrt{5}}{2\mathtt{c}^{1/3} }, &\; \; b:= b(\mathtt{c}) = \frac{3-\sqrt{5}}{2\mathtt{c}^{1/3} }, \label{ab} \\
d_1 := d_1(\mathtt{c}) = \frac{ -3-\sqrt{5} }{18 \mathtt{c}}, &\; \; d_2 := d_2(\mathtt{c})=\frac{ -3+\sqrt{5} }{18 \mathtt{c}}. \label{d1d2}
\end{align}

\begin{remark} \label{clinprop}
Notice that the following properties hold.
\begin{enumerate}
\item[$(i)$] From \eqref{d1d2} one readily obtains that
\begin{align*}
%d_1(\mathtt{c}),d_2(\mathtt{c})\lessgtr 0 \; \; &\text{for} \; \; \mathtt{c} \gtrless 0, \\
%d_1(\mathtt{c}) \gtrless d_2{\mathtt{c}) \; \; &\text{for} \; \; \mathtt{c} \lessgtr 0, \\
\lim_{\mathtt{c} \to 0^\pm} d_1(\mathtt{c}) &= \mp \infty, \\
\lim_{\mathtt{c} \to 0^\pm} d_2(\mathtt{c}) &= \mp \infty. 
\end{align*}
The last two limits imply that in the dispersionless limit
\begin{align*}
\lim_{\mathtt{c} \to 0^\pm} c_m &= -(sgn( \mathtt{c} ))^{m} \infty, \; \; m \geq 1.
\end{align*}

\item[$(ii)$] By direct computation one also obtains that:
\begin{enumerate}
\item for any $\mathtt{c}>0$ 
\begin{equation*}
\begin{cases}
c_m<0 &\text{for even} \;  m \geq 2; \\
c_m<0 &\text{for odd} \; m \geq 2, 
\end{cases}
\end{equation*}

\item for any $\mathtt{c}<0$ 
\begin{equation*}
\begin{cases}
c_m>0 &\text{for even} \; m \geq 2; \\
c_m<0 &\text{for odd} \; m \geq 2, 
\end{cases}
\end{equation*}

\item $\lim_{m \to \infty} |c_m| = +\infty$ (respectively, $\lim_{m \to \infty} |c_m| = 0$) for $|\mathtt{c}| < \mathtt{c}^\ast:=\left(\frac{3+\sqrt{5}}{2}\right)^3$ (respectively, for $|\mathtt{c}| > \mathtt{c}^\ast$). 
\end{enumerate}

\end{enumerate}
\end{remark}

\subsubsection{Coefficients of the quadratic terms} \label{quadsubsec}

To determine the coefficients  in front of the quadratic terms containing the maximal number of derivatives in $f_2^{(n)}$, we integrate \eqref{ordbyord1}
\begin{align}
\int \rho^{(n+2)}p^{2} dx &=\int \rho^{(n)}-\rho^{(n)}_{xx}-3\sum_{k_1+k_2=n+1}p\rho^{(k_1)}\rho^{(k_2)}  -\sum_{k_1+k_2+k_3=n}\rho^{(k_1)}\rho^{(k_2)}\rho^{(k_3)} dx \label{eqint1} \\
 &-\int 3\del_{x}(\rho^{(n+1)}p)-3\sum_{k_1+k_2=n}\rho^{(k_1)}\rho^{(k_2)}_x dx. \label{eqint2}
\end{align}
Now, it is easy to see that the integral in \eqref{eqint2} vanishes, since the term $\del_{x}(\rho^{(n+1)}p)$ is a total derivative, and since
\begin{align*} 
\sum_{k_1+k_2=n}\rho^{(k_1)}\rho^{(k_2)}_x &= \sum_{ \substack{k_1+k_2=n \\ k_1>k_2} } \del_x(\rho^{(k_1)}\rho^{(k_2)}). 
\end{align*}
Now, if write $\rho^{(n)} = f_2^{(n)}+h^{(n)}$ and we consider only the coefficients in front of the quadratic terms containing the maximal number of derivatives, the relation \eqref{eqint1}- \eqref{eqint2} reads as
\begin{align} \label{inteq2}
\mathtt{c}^{2/3} \int \sum_{k_1+k_2=n+3} c^{k_1,k_2}_{n+3} (\del_x^{k_1}w)(\del_x^{k_2}w) dx &= 3\mathtt{c}^{1/3} \int \sum_{k_1+k_2=n+1} c_{k_1}c_{k_2} (\del_x^{k_1+1}w)(\del_x^{k_2+1}w) dx,
\end{align}
but since 
\begin{align*}
\int \sum_{k_1+k_2=n+3} c^{k_1,k_2}_{n+3} (\del_x^{k_1}w)(\del_x^{k_2}w) dx &= \sum_{k=0}^{n+3} c^{k,n+3-k}_{n+3} (-1)^{q(k)} \int (\del_x^{\frac{n+3}{2}} w)^2 dx, \\
& q(k):= \frac{n+3}{2}-k, \\
\int \sum_{k_1+k_2=n+1} c_{k_1}c_{k_2} (\del_x^{k_1+1}w)(\del_x^{k_2+1}w) dx &= \sum_{k=0}^{n+1} c_{k} c_{n-k+1} (-1)^{\tilde{q}(k)} \int (\del_x^{\frac{n+3}{2}} w)^2 dx, \\
& \tilde{q}(k):= \frac{n+1}{2}-k,
\end{align*}
we obtain
\begin{align} \label{coeffquadeq}
\sum_{k=0}^{n+3} (-1)^{q(k)} c^{k,n+3-k}_{n+3} &= 3\mathtt{c}^{-1/3} \sum_{k=0}^{n+1} (-1)^{\tilde{q}(k)} c_{k}c_{n-k+1}.
\end{align}

Now we show that the coefficients in front of the quadratic terms containing the maximal number of derivatives do not vanish. We recall that by Remark \ref{remarkone}-$(ii)$ we deal only with $n$ odd.

\begin{prop} \label{coeffquadlemma}
Let $n$ be odd, then, recalling \eqref{coeffquadeq}, we have
\begin{align}
S_n := \sum_{k=0}^{n+1} (-1)^{\tilde{q}(k)} c_{k}c_{n-k+1} &\neq 0.
\end{align}
\end{prop}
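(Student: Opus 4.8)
The plan is to substitute the explicit closed form $c_m = d_1 a^m + d_2 b^m$ from \eqref{cm}--\eqref{d1d2} into the alternating convolution $S_n$ and to reduce it to two elementary geometric sums. Since $n$ is odd, I first set $2M := n+1$ and observe that $\tilde q(k) = M-k$, so that $(-1)^{\tilde q(k)} = (-1)^{M}(-1)^{k}$. Pulling the sign $(-1)^M$ out and rewriting the index $n-k+1 = 2M-k$ gives
\[
S_n = (-1)^M \sum_{k=0}^{2M}(-1)^k c_k\, c_{2M-k}.
\]

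Next I would expand the product as $c_k c_{2M-k} = d_1^2 a^{2M} + d_2^2 b^{2M} + d_1 d_2\big(a^k b^{2M-k} + a^{2M-k}b^k\big)$. Against the weight $(-1)^k$, the two ``diagonal'' summands contribute exactly $d_1^2 a^{2M}$ and $d_2^2 b^{2M}$, because $\sum_{k=0}^{2M}(-1)^k = 1$ (an odd number of alternating terms). The two mixed summands are geometric series: summing $\sum_{k=0}^{2M}(-1)^k a^k b^{2M-k}$ (ratio $-a/b$) and its mirror image, one checks that both equal $\frac{a^{2M+1}+b^{2M+1}}{a+b}$. Collecting the contributions yields the closed form
\[
S_n = (-1)^M\Big[d_1^2 a^{2M} + d_2^2 b^{2M} + \frac{2 d_1 d_2}{a+b}\big(a^{2M+1}+b^{2M+1}\big)\Big].
\]

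The conclusion then follows from a sign analysis of the bracket, and here the decisive structural observation is that $a$ and $b$ always share the same sign, namely that of $\mathtt{c}^{-1/3}$ (equivalently of $\mathtt{c}$), since by \eqref{ab} both are positive multiples of $\mathtt{c}^{-1/3}$. Consequently $a^{2M}, b^{2M} > 0$, while $a^{2M+1}+b^{2M+1}$ and $a+b$ carry that common sign, so their quotient is strictly positive. Together with $d_1^2, d_2^2 > 0$ and $d_1 d_2 = \frac{1}{81\mathtt{c}^2} > 0$ (immediate from \eqref{d1d2}), all three summands in the bracket are strictly positive; hence the bracket is nonzero and $S_n \neq 0$, as claimed. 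Note that this argument is insensitive to which root is labelled $a$ or $b$, since it uses only $d_1 d_2 > 0$, $d_1^2,d_2^2>0$, and the common sign of $a,b$.

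I expect the only genuinely delicate point to be ruling out a hidden cancellation in the alternating convolution: a priori the factor $(-1)^k$ could conspire to force $S_n = 0$. What removes this danger is precisely that $a$ and $b$ lie on the same side of the origin, which turns $\frac{a^{2M+1}+b^{2M+1}}{a+b}$ into a positive quantity and makes all contributions add with the same sign rather than cancel. The remaining work is routine bookkeeping: carrying out the two geometric sums correctly and verifying the common-sign property uniformly over $\mathtt{c}\in\mathbb{R}\setminus\{0\}$, treating the cases $\mathtt{c}>0$ and $\mathtt{c}<0$ together through the observation on $\mathtt{c}^{-1/3}$.
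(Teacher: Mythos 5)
Your proof is correct, and it follows a genuinely different and more streamlined route than the paper's. The paper folds the sum via the symmetry $k\leftrightarrow n+1-k$, splits into the cases $n=4l+3$ and $n=4l+1$, computes the geometric sums of the consecutive pairs $c_{2s}c_{n+1-2s}-c_{2s+1}c_{n-2s}$, and then controls signs through auxiliary monotone sequences ($\alpha_l,\beta_l$, resp.\ $\gamma_l,\delta_l,\epsilon_l$, all converging to $1$); the case $n=4l+1$ is the delicate one there, since it mixes contributions of opposite signs. You instead evaluate the full alternating convolution in closed form,
\[
S_n=(-1)^{M}\Bigl[d_1^2a^{2M}+d_2^2b^{2M}+\tfrac{2d_1d_2}{a+b}\bigl(a^{2M+1}+b^{2M+1}\bigr)\Bigr],\qquad M=\tfrac{n+1}{2},
\]
and conclude with a single uniform sign argument: $a$ and $b$ share the sign of $\mathtt{c}^{1/3}$ and $d_1d_2=\frac{1}{81\mathtt{c}^2}>0$, so all three bracketed terms are strictly positive. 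This buys an explicit formula for $S_n$ (not merely nonvanishing), no case distinction modulo $4$, and the blow-up $|S_n|\to\infty$ as $\mathtt{c}\to 0$ of Remark \ref{displess} as an immediate byproduct. I checked your formula against the paper's data: for $n=1$ it reproduces $S_1=-\frac{14}{81\mathtt{c}^{8/3}}$, and for $n=3$ it gives $\frac{89}{81\mathtt{c}^{10/3}}$, matching direct computation from the recursion \eqref{lincoeff} --- provided the constants are paired correctly. On that point your closing robustness remark does real work: the labels in \eqref{cm}--\eqref{d1d2} are internally inconsistent, since with $c_0=-\frac{1}{3\mathtt{c}}$ and $c_1=-\frac{2}{9\mathtt{c}^{4/3}}$ one finds $c_m=d_2a^m+d_1b^m$, i.e.\ $d_1$ and $d_2$ interchanged; but because your argument uses only $d_1^2>0$, $d_2^2>0$, $d_1d_2>0$ and the common sign of $a,b$ --- all invariant under relabelling --- your proof is unaffected by this ambiguity, whereas any argument tracking $d_1$ and $d_2$ separately (as the paper's does) must be read with the corrected pairing.
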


\begin{proof}
Consider the right-hand side of Eq. \eqref{coeffquadeq}; by simple calculations
\begin{align} \label{coeffquadeq2}
 S_n &= 2 (-1)^{\frac{n+1}{2}} \left( \sum_{ \substack{ k=0,\ldots,\frac{n-1}{2} \\ k \; \text{even} } } c_{k} c_{n+1-k} - \sum_{ \substack{ k=0,\ldots,\frac{n-1}{2} \\ k \; \text{odd} } } c_{k} c_{n+1-k} \right) +  c_{\frac{n+1}{2}}^2.
\end{align}

First, one can verify explicitly that 
\begin{align*}
S_1 &= -2c_0c_2+c_1^2 = -2(d_1+d_2)(d_1a^2+d_2b^2)+(d_1a+d_2b)^2 \nonumber \\
&= -\frac{14}{81 \mathtt{c}^{8/3} } \neq 0.
\end{align*}

Now we distiguish the two cases $n=4l+3$ ($l \geq 0$) and $n=4l+1$ ($l>0$). 

\emph{Case $n=4l+3$:} first observe that 
\begin{align}
c_{2s} c_{n+1-2s} - c_{2s+1} c_{n-2s} &= d_1 d_2 \left[ a^{2s} b^{n+1-2s} + b^{2s} a^{n+1-2s} - a^{2s+1} b^{n-2s} - b^{2s+1} a^{n-2s} \right] \nonumber \\
&= d_1 d_2 \left[ a^{2s} b^{n-2s} (b-a) + b^{2s} a^{n-2s} (a-b) \right] \nonumber \\
&= d_1 d_2 \left[ b^n \left(\frac{a}{b}\right)^{2s} (b-a) - a^n \left( \frac{b}{a} \right)^{2s} (b-a) \right] \nonumber \\
&= d_1 d_2 (b-a) \left[ b^n \left( \frac{a}{b} \right)^{2s} - a^n \left( \frac{b}{a} \right)^{2s} \right], \label{prodcoeff}
\end{align}
so in this case we have 
\begin{align*}
\sum_{s=0}^{\frac{n-3}{4}} (c_{2s} c_{n+1-2s} - c_{2s+1} c_{n-2s}) &= d_1 d_2 (b-a) \left[ b^n \frac{ 1- \left( \frac{a^2}{b^2} \right)^{\frac{n+1}{4}} }{ 1-a^2/b^2 } - a^n \frac{ 1- \left( \frac{b^2}{a^2} \right)^{\frac{n+1}{4}} }{ 1-b^2/a^2 } \right] \\
&= d_1 d_2 (b-a) \left( b^{n+2-\frac{n+1}{2}} \frac{ b^{\frac{n+1}{2}} - a^{\frac{n+1}{2}} }{b^2-a^2} + a^{n+2-\frac{n+1}{2}} \frac{ a^{\frac{n+1}{2}}-b^{\frac{n+1}{2}} }{b^2-a^2} \right) \\
&=\frac{d_1 d_2}{a+b} ( b^{\frac{n+3}{2}} - a^{\frac{n+3}{2}} )( b^{\frac{n+1}{2}} - a^{\frac{n+1}{2}} ) \\
&\stackrel{n=4l+3}{=} \frac{d_1d_2}{a+b} (b^{2l+3}-a^{2l+3})(b^{2l+2}-a^{2l+2}),
\end{align*}

and the thesis is equivalent to 
\begin{align} \label{case3ineq}
2 \frac{d_1 d_2}{a+b} (a^{2l+3}-b^{2l+3})(a^{2l+2}-b^{2l+2}) + (d_1 a^{2l+2}+d_2 b^{2l+2})^2 &\neq 0
\end{align}
In order to verify \eqref{case3ineq} we observe that
\begin{align*}
(a^{2l+3}-b^{2l+3})(a^{2l+2}-b^{2l+2}) &= a^{4l+5} \left[ 1 - \left(\frac{b}{a}\right)^{2l+2} - \left(\frac{b}{a}\right)^{2l+3} + \left(\frac{b}{a}\right)^{4l+5} \right] =: a^{4l+5} \alpha_l,
\end{align*}
where $(\alpha_l)_{l \in \mathbb{N}}$ is an increasing sequence of positive numbers (which do not depend on $\mathtt{c}$) satisfying $\lim_{l \to \infty} \alpha_l=1$; similarly, we have
\begin{align*}
(d_1 a^{2l+2} + d_2 b^{2l+2})^2 &= a^{4l+4} d_1^2 \left[ 1 + \frac{d_2}{d_1} \left( \frac{b}{a} \right)^{2l+2} \right]^2 =: a^{4l+4} d_1^2 \beta_l,
\end{align*}
where $(\beta_l)_{l \in \mathbb{N}}$ is a decreasing sequence of positive numbers (which do not depend on $\mathtt{c}$) satisfying $\lim_{l \to \infty} \beta_l=1$. Since $2a \frac{d_1 d_2}{a+b}>0$ by \eqref{ab} and \eqref{d1d2}, we have that the left-hand side in \eqref{case3ineq} is given by
\begin{align*}
a^{4l+4} \left( 2a \frac{d_1d_2}{a+b}  \alpha_l +  d_1^2 \beta_l \right) &= 
\frac{1}{\mathtt{c}^{2+(4l+4)/3}} \left(\frac{3+\sqrt{5}}{2}\right)^{4l+4} \left( \frac{3+\sqrt{5}}{243} \alpha_l + \frac{7+3\sqrt{5}}{162} \beta_l \right) > 0.
\end{align*}

\emph{Case $n=4l+1$ ($l \geq 1$):} by arguing as in \eqref{prodcoeff}, we get
\begin{align*}
\sum_{s=0}^{l-1} (c_{2s} c_{n+1-2s} - c_{2s+1} c_{n-2s}) &= \frac{d_1 d_2}{a+b} (b^{2l+2}-a^{2l+2})(b^{2l+1}-a^{2l+1}),
\end{align*}
and the thesis is equivalent to the following inequality,
\begin{align} \label{case1ineq}
2 \frac{d_1 d_2}{a+b} (b^{2l+2}-a^{2l+2})(b^{2l+1}-a^{2l+1}) + 2 (d_1 a^{2l}+d_2 b^{2l})(d_1 a^{2l+2} + d_2 b^{2l+2} ) - (d_1 a^{2l+1} + d_2 b^{2l+1})^2 &\neq 0.
\end{align}
In order to verify \eqref{case1ineq} we observe that
\begin{align*}
& 2 \frac{d_1 d_2}{a+b} (a^{2l+2}-b^{2l+2})(a^{2l+3}-b^{2l+3}) \\
&= 2 \frac{d_1 d_2}{a+b} a^{4l+3} \left[ 1 - \left( \frac{b}{a} \right)^{2l+1} - \left( \frac{b}{a} \right)^{2l+2} + \left( \frac{b}{a} \right)^{4l+3} \right] =: 2 \frac{d_1 d_2}{a+b} a^{4l+3} \gamma_l,
\end{align*}
where $(\gamma_l)_{l \geq 1}$ is a increasing sequence of positive numbers (which do not depend on $\mathtt{c}$) satisfying $\lim_{l \to \infty} \gamma_l=1$; similarly,
\begin{align*}
& 2(d_1 a^{2l} + d_2 b^{2l})(d_1 a^{2l+2} + d_2 b^{2l+2}) \\
&= 2d_1^2 a^{4l+2} \left[ \left(1 + \frac{d_2}{d_1} \left(\frac{b}{a}\right)^{2l} \right) \left(1 + \frac{d_2}{d_1} \left(\frac{b}{a}\right)^{2l+2} \right) \right] =: 2d_1^2 a^{4l+2} \delta_l,
\end{align*}
where $(\delta_l)_{l \geq 1}$ is an decreasing sequence of positive numbers (which do not depend on $\mathtt{c}$) satisfying $\lim_{l \to \infty} \delta_l=1$, and
\begin{align*}
(d_1 a^{2l+1} + d_2 b^{2l+1})^2 &= a^{4l+2} d_1^2 \left( 1 + \frac{d_2}{d1} (b/a)^{2l+1} \right)^2 =: a^{4l+2} d_1^2 \epsilon_l,
\end{align*}
where $(\epsilon_l)_{l \geq 1}$ is a decreasing sequence of positive numbers (which do not depend on $\mathtt{c}$) such that $\lim_{l \to \infty} \epsilon_l=1$.
Since $2a \frac{d_1 d_2}{a+b}>0$ by \eqref{ab} and \eqref{d1d2}, we have that the left-hand side in \eqref{case3ineq} is given by
\begin{align*}
  a^{4l+2} \left( 2a \frac{d_1 d_2}{a+b} \gamma_l + 2 d_1^2 \delta_l - d_1^2\epsilon_l \right) &= \frac{1}{\mathtt{c}^{2+(4l+2)/3}} \left(\frac{3+\sqrt{5}}{2}\right)^{4l+2} \left(\frac{3+\sqrt{5}}{243} \gamma_l + \frac{7+3\sqrt{5}}{81} \delta_l - \frac{7+3\sqrt{5}}{162} \epsilon_l \right) \\
&\neq 0.
\end{align*}
\end{proof}

\begin{remark}\label{displess}
By arguing as in the proof of the above proposition, one can also show that for any odd number $n$
\begin{equation*}
\lim_{\mathtt{c} \to 0} |S_n| = + \infty,
\end{equation*}
which reflects the fact that the present approach cannot be applied to the dispersionless DP equation \eqref{dispersionless}.
\end{remark}

By Proposition \ref{coeffquadlemma} we have that the number of derivatives appearing in the quadratic part is greater or equal than the one appearing in the cubic remainder.

\subsection{Proof of Theorem \ref{costantiMotoDP}}

The proof of Theorem \ref{costantiMotoDP} is based on the following reasoning.
We recall that, by the discussion in Section \ref{appsec}, 
we have constructed  sequence of constants of motion $\Gamma^{(n)}(w)$ with $n\geq0$ of the form
\begin{equation}
\Gamma^{(n)}(w)=\int_{\mathbb{T}}\rho^{(n)}(w)dx,
\end{equation}
where $\rho^{(n)}\in \Sigma^{0}_{n+1}$ (see Lemma \ref{lemmaron}) are defined iteratively by \eqref{ordbyord}, \eqref{ordbyord1}.
We recall also (see \eqref{costpari}, \eqref{costdispari})
that, for $n$ even 
\begin{equation}\label{costpari10}
\Gamma^{(n)}(w)=\int_{\mathbb{T}}\rho^{(n)}(w)dx=\int_{\mathbb{T}}g^{(n)}(w)dx, \quad g^{(n)}\in \Sigma^{3}_{n+1},
\end{equation}
while for $n$ odd
\begin{equation}\label{costdispari10}
\Gamma^{(n)}(w)=\int_{\mathbb{T}}\rho^{(n)}(w)dx=\int_{\mathbb{T}}f_2^{(n)}(w)+h^{(n)}(w)dx, \quad h^{(n)}\in \Sigma^{3}_{n+1},
\end{equation}
and $f_{2}^{(n)} \in \calP_{n+1}^2$ as in \eqref{geneform}. 
%Moreover, by Proposition \ref{coeffquadlemma}
%we have $\int_{\mathbb{T}}f_{2}^{(n)}(w)dx\neq0$.
%We need to show that $\Gamma^{(n)}(w)$ given in terms of formal series (see Def. \ref{series}) actually defines 
%functions well defined in a neighborhood of zero in $H^{n}$.
%We shall use the following Lemma.

\begin{prop}\label{seqvera}
For any $N\in \N$, $N\geq 2$ there are $r=r(N)>0$, $c_1=c_1(N)>0$, $c_2=c_{2}(N)>0$ and a sequence of functions 
\begin{equation}\label{funz}
F_n :  H^N \to \R,\quad 1\leq n\leq N,
\end{equation}
analytic on $B_{H^N}(0,  \lvert \mathtt{c} \rvert /2)$
with the following properties.
\begin{itemize}
\item[(i)] For any $0\leq n\leq N$, we have
\begin{equation}\label{uffa1}
F_n(w)=\int g^{(n)}dx, \quad g^{(n)}\in \Sigma_{n+1}^{3}, \;\;\; n=2k, \quad {\rm for \;\; some} \quad k\in \N,
\end{equation}
and 
\begin{equation}\label{uffa2}
F_n(w)=\int \Big(q_2^{(n)}+q_3^{(n)} \Big) dx, \quad  q_3^{(n)}\in \Sigma_{n+1}^{3}, \;\;\; n=2k+1, \quad {\rm for \;\; some} \quad k\in \N,
\end{equation}
where $q_{2}^{(n)} \in \calP^{2}_{n+1}$ and
\begin{equation}\label{uffa22}
\int q_{2}^{(n)}(w)\,dx=\int (\del_{x}^{k+1}w)^{2}\,dx.
\end{equation}
\item[(ii)] For $n=2k$, $k\in \N$, we have
\begin{equation}\label{uffa4}
|F_n(w)|\leq c_1\|w\|_{H^{n+1}}^{3}, \quad \forall \; 0\leq n\leq N, \;\;  w\in B_{H^{N}}(0,r),
\end{equation}
\item[(iii)] for $n=2k+1$, $k\in \N$, we have
\begin{equation}\label{uffa5}
|F_n(w)-\int q_2^{(n)}dx|\leq c_{2} \|w\|_{H^{n+1}}^{3}, \quad \forall \; 0\leq n\leq N, \;\;  w\in B_{H^{N}}(0,r).
\end{equation}
\end{itemize}

Finally, let $u(t,x)$ be the solution of \eqref{DP} with $u(0,x)=u_0(x)\in H^{N+2}$ such that $\|u_0\|_{H^{N+2}}\leq \e$. 
%defined for  $|t| \leq \bar{T}$, with $\bar{T}$ as in \eqref{estnormsol}
%such that $\|u_0\|_{H^{N+2}}\leq \e$
%and  
%\[
%u\in C^{0}([-\bar{T},\bar{T}];H^{N+2}), \quad \|u(t,\cdot)\|_{H^{N+2}}\leq 2 \e,\quad |t| \leq \bar{T}.
%\]
Define $w(t,x)=(1-\del_{xx})u(t,x)$. Then, as long as $\|u(t,\cdot)\|_{H^{N+2}}\leq r/2$,  we have that
\begin{equation}\label{uffa3}
\frac{d}{dt}F_n(w)=0, \quad 0\leq n\leq N.
\end{equation}

\end{prop}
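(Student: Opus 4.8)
The plan is to produce the $F_n$ as explicit finite linear combinations of the already-constructed conserved quantities, tuned so that the quadratic part collapses onto a single monomial. For even $n=2k$ I would simply set $F_n:=\Gamma^{(n)}$; by \eqref{costpari} its density already has a zero of order three, so \eqref{uffa1} holds with $g^{(n)}\in\Sigma_{n+1}^3$. For odd $n=2k+1$ the quadratic part of $\Gamma^{(2j+1)}$ is $\int f_2^{(2j+1)}\,dx$ with $f_2^{(2j+1)}\in\calP_{2j+2}^2$ as in \eqref{geneform}; integrating by parts as in Remark \ref{remarkone}-$(ii)$ kills every $\int(\del_x^{k_1}w)(\del_x^{k_2}w)\,dx$ with $k_1+k_2$ odd and reduces the even ones to multiples of $\int(\del_x^i w)^2\,dx$, so that $\int f_2^{(2j+1)}\,dx=\sum_{i=0}^{j+1}A^{(j)}_i\int(\del_x^i w)^2\,dx$. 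Proposition \ref{coeffquadlemma} is exactly the statement that the leading coefficient $A^{(j)}_{j+1}$ never vanishes.

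Since the odd $\Gamma^{(2j+1)}$ only reach the quantities $e_i:=\int(\del_x^i w)^2\,dx$ with $i\ge 1$, to also control the $e_0=\int w^2\,dx$ component I would add to the toolbox the $\zeta^{-1}$-coefficient $\int p\,dx=-M_1$ of the generating series \eqref{RHO}, whose quadratic part is a nonzero multiple of $\int w^2\,dx$ by \eqref{p} and which is conserved by \eqref{costanteneltempo}. In the basis $\{e_0,\dots,e_{k+1}\}$ the quadratic parts of $\int p\,dx,\Gamma^{(1)},\Gamma^{(3)},\dots,\Gamma^{(2k+1)}$ then form a square triangular system with nonvanishing diagonal, hence invertible; solving it by back-substitution fixes the coefficients of the combination $F_{2k+1}$ so that its quadratic part is exactly $e_{k+1}$, i.e. \eqref{uffa22}. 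Writing $q_2^{(n)}$ and $q_3^{(n)}$ for the corresponding combinations of the $f_2$'s and of the cubic remainders, the decomposition \eqref{uffa2} follows with $q_2^{(n)}\in\calP_{n+1}^2$ and, by the closure of the classes under finite sums in Lemma \ref{polem}, $q_3^{(n)}\in\Sigma_{n+1}^3$ (each remainder lying in some $\Sigma_{m}^3\subseteq\Sigma_{n+1}^3$, $m\le n+1$).

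The estimates \eqref{uffa4} and \eqref{uffa5} are then immediate from Proposition \ref{LemmaStimeTaylor} applied to $\int g^{(n)}\,dx$ and $\int q_3^{(n)}\,dx$, whose integrands lie in $\Sigma_{n+1}^3$: this gives the cubic bound $C\|w\|^3_{H^{n+1}}$. For the analyticity of $F_n$ on $B_{H^N}(0,|\mathtt{c}|/2)$ I would use that $F_n$ is a finite combination of $\int p\,dx$ and of the $\Gamma^{(m)}$, $m\le n\le N$, each real on real by Remark \ref{realta} and analytic by Lemma \ref{lemmaron} and Proposition \ref{LemmaStimeTaylor}; here the decisive structural point is Remark \ref{remarkaffine}, namely that every density is affine in its highest derivative $\del_x^{m+1}w$, so that one integration by parts lowers the effective order to $m\le N$ and renders the functional analytic in the $H^N$-topology. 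This is precisely what permits the full range $1\le n\le N$ rather than only $n\le N-1$.

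Finally, conservation. Each building block is a constant of motion ($\int p\,dx$ and the $\Gamma^{(m)}$ by \eqref{costanteneltempo}, coming from the Lax-pair conservation law \eqref{conservationlaw}) and $F_n$ is a constant linear combination of them, so \eqref{uffa3} is formally clear. The main obstacle is making this rigorous at the available regularity: $\rho^{(m)}$ contains $\del_x^{m+1}w=\del_x^{m+1}(1-\del_{xx})u$, so the pointwise identity $\rho_t=j_x$ needs more derivatives of $u$ than the hypothesis $u_0\in H^{N+2}$ (whence $w\in H^N$) provides when $m$ is close to $N$. I would bypass this by a density argument: \eqref{uffa3} holds classically for smooth initial data, for which $\rho^{(m)},j^{(m)}$ are well defined and $\frac{d}{dt}\int\rho^{(m)}\,dx=\int\del_x j^{(m)}\,dx=0$; one then passes to general $u_0\in H^{N+2}$ using the continuity of $F_n$ on $B_{H^N}(0,|\mathtt{c}|/2)$ established above, together with the continuous dependence on the initial datum granted by local well-posedness in $H^{N+2}$. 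The bootstrap hypothesis $\|u(t,\cdot)\|_{H^{N+2}}\le r/2$ keeps $w(t)$ inside the ball of analyticity for all relevant times, so the limit is legitimate and $F_n(w(t))=F_n(w(0))$, which is \eqref{uffa3}.
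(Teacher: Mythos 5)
Your proposal follows essentially the same route as the paper's proof: the even case is handled identically ($F_n=\Gamma^{(n)}$ with the bound from Proposition \ref{LemmaStimeTaylor}), and your triangular system in the basis $e_i=\int(\partial_x^i w)^2\,dx$ — with $\int p\,dx=-M_1$ supplying the $e_0$ direction and Proposition \ref{coeffquadlemma} guaranteeing the nonvanishing diagonal — solved by back-substitution is precisely the paper's explicit recursion \eqref{iteration}. The only point of divergence is the conservation statement \eqref{uffa3}, which the paper dispatches in one line (the $F_n$ are linear combinations of conserved quantities) while you add a smoothing/density argument to justify it at the available regularity; this extra care is sound and, if anything, more rigorous than the paper's assertion.
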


 \begin{proof}
 First of all note that  for $0\leq s\leq N$
 \[
 \|w\|_{H^{s}}\leq \|u\|_{H^{s}}+\|u\|_{H^{s+2}}.
 \]
 This implies that as long as $\|u(t,\cdot)\|_{H^{N+2}}\leq r/2$ one has
 $\|w\|_{H^{s}}\leq r$.

For $n\in \N$ even we define $F_n(w):=\Gamma^{(n)}(w)$, hence \eqref{uffa1} holds by \eqref{costpari10}. The bound \eqref{uffa4} follows by Proposition \ref{LemmaStimeTaylor}.\\
For $n$ odd we construct the functions $F_n$ iteratively, by putting the quadratic parts of the functions $\Gamma^{(n)}$ with $n$ odd in a \emph{triangular} form. \\
First we observe that for any $n$ odd (see \eqref{geneform})
\begin{equation}\label{real}
\int f_2^{(n)}\,dx= \sum_{\substack{0\le p\leq n+1,\\ p\equiv 0 (2)} } \sum_{k_1+k_2=p} c^{k_1 k_2}_n \,\int (\partial_x^{k_1} w)\,(\partial_x^{k_2} w)\,dx=\sum_{i=0}^{(n+1)/2} d_{n}^i\,\int (\partial_x^{i} w)^2\,dx
\end{equation}
for some coefficients $d_n^i$ obtained after the integration by parts and by Lemma \ref{coeffquadlemma} 
\begin{equation}\label{REM}
d_n^{(n+1)/2}\neq 0 \qquad \forall n.
\end{equation}
Now let us show the first step of the triangularization. We write the quadratic part of $\Gamma^{(1)}$ as
\[
\int f_2^{(1)}\,dx=d_1^0 \int w^2\,dx+d_1^1\,\int w_x^2\,dx.
\]
Recall the definition of the constants of motion $M_1$ in \eqref{nonleho}. Its quadratic part is, by \eqref{emme}, \eqref{p} and Remark \ref{remarkone},
\[
M_1^{(2)}=\frac{1}{9} \int w^2\,dx.
\]
We remark that $\Gamma^{(1)}$, $M_1$ and their linear combinations are constants of motion. We define (recall \eqref{REM})
\[
F_1:=\frac{1}{d_1^1}\left(\Gamma^{(1)}-9 d_1^0 M_1\right)
\]
so that its quadratic part reads as
\begin{equation}\label{passozero}
\int q_2^{(1)}\,dx:=\int w_x^2\,dx
%=\sum_{j\in\mathbb{Z}\setminus\{0\}} j^2\,(1+j^2)^2\,\lvert u_j \rvert^2
\end{equation}
and for the cubic part we have
\[
\int q_3^{(1)}\,dx=\frac{1}{d_1^1}h^{(1)}-9 d_1^0 M_1^{(\geq 3)}.
\]
Since by Lemma \ref{polem} $q_3^{(1)}\in\Sigma_{2}^3$, by Proposition \ref{LemmaStimeTaylor} the bound \eqref{uffa5} holds for $F^{(1)}$.\\
We define iteratively for $n=2 k+1$, $k\geq 1$
\begin{equation}\label{iteration}
F_{2 k+1}:=\frac{1}{d_{2 k+1}^{k+1}}\left( \Gamma^{(2 k+1)}-9 d_{2k+1}^0 M_1-\sum_{i=1}^{k} d_{2k+1}^i F_{2 i+1} \right)
\end{equation}
and \eqref{uffa2} follows easily by Lemma \ref{polem}.
Now we prove \eqref{uffa22} by induction on $k\geq 0$. We just showed the basis of the induction in \eqref{passozero} ($k=0$ ). Now suppose that
\[
\int q_2^{(2 k+1)}\,dx=\int (\partial_x^{k+1} w)^2\,dx.
\] 
Then by \eqref{iteration} 
\begin{align*}
d^{k+2}_{2k+3}\int q_2^{(2 k+3)}\,dx &=\int f_2^{(2k +3)}\,dx- d_{2k+3}^0 \int w^2\,dx-\sum_{i=1}^{k+1} d_{2k+3}^i \int (\partial_x^i w)^2\,dx\stackrel{(\ref{real})}{=}d^{k+2}_{2k+3}\int (\partial_x^{k+2} w)^2\,dx,
\end{align*}
which proved the claim. The bound \eqref{uffa5} follows trivially by triangle inequality and Proposition \ref{LemmaStimeTaylor}.\\
Obviously the $F_{2k +1}$ defined in \eqref{iteration} are constants of motion because they are linear combinations of conserved quantities.

 \end{proof}
\begin{proof}[{\bf Proof of Theorem \ref{costantiMotoDP}}] 
We set $K_1(u):=M_1(u)$ where $M_1$ is defined in \eqref{nonleho}. 
For any $n\geq2$ of the form $n=k+1$ we set 
\[
K_{n}(u)=F_{2k-1}(w),
\]
where $F_{2k-1}(w)$ is given in Proposition \ref{seqvera}.\\
The item $(0)$ is verified since the functions $F_{n}$ constructed in Proposition \eqref{costantiMotoDP} satisfy \eqref{uffa3}.\\
The item $(i)$ and $(ii)$ follows by item $(i)$ of Proposition \eqref{costantiMotoDP}.\\
For the item $(iii)$, \eqref{stimecostanti} follows by \eqref{uffa5} and \eqref{equivalenzaNorma} by \eqref{uffa2}, the definitions \eqref{spazio}, \eqref{spazioR} and by considering the equivalent norm $\lVert u \rVert_{L^2}+\lVert \partial_x^s u \rVert_{L^2} \approx \lVert u \rVert_{H^s}$.

 %The \eqref{formacostanti} follows by the fact  that (recall \eqref{uffa2}) $q_3^{(2k+1)}\in \Sigma^{3}_{2k+2}$, $q_2^{(2k+1)}$ has the form \eqref{uffa2} and by \eqref{emme}. 

%Since 
%\[
%(1+|j|^{2})^{n+1}\leq 1+\mathtt{c}(s)|j|^{2n-2}(1+|j|^{2})^2,
%\]
%then, recalling \eqref{spazio} we have that \eqref{equivalenzaNorma} holds.
%The \eqref{stimecostanti} follows by   \eqref{uffa5}.
%Of course \eqref{commutano} holds since all the $F^{(j)}$ in Proposition \eqref{costantiMotoDP} are constants of motion for equation \eqref{DP}.
\end{proof}

\section{Global well-posedness near the origin}\label{GWPDP}

In this Section we give the proof of Theorem \ref{esistenzaglobale} by using Theorem \ref{costantiMotoDP}.
First we state the following local well-posedness result for Eq. \eqref{DP}.

\begin{prop}[{\bf Local existence}]\label{esistesol2}
For any $s > 3/2$ and for any $u_0\in H^s$  there exist 
$\bar{T}=\bar{T}(\|u_0\|_{H^s})$ and a unique solution $u(t,x)$ of \eqref{DP} 
with initial condition $u(0,x)=u_0(x)$ defined for $t\in [-\bar{T},\bar{T}]$ 
belonging to the space $C([-\bar{T},\bar{T}],{{H}}^{s})$,  such that
\begin{align}
\|u(t)\|_{H^s} &\leq 2 \|u_0\|_{H^s}, \; \; |t| \leq \bar{T} \leq \frac{1}{ 2C_s \|u_0\|_{H^s} }, \label{estnormsol}
\end{align}
for some constant $C_s>0$ depending only on $s$.
\end{prop}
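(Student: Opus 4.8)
The plan is to recast \eqref{DP} as a nonlocal transport equation and then run a mollification-plus-energy-estimate scheme in the spirit of \cite{HH}. First I would invert the elliptic operator $1-\partial_{xx}$: writing $(1-\partial_{xx})u_t$ for $u_t-u_{xxt}$ and using the algebraic identities $\partial_{xx}(uu_x)=3u_xu_{xx}+uu_{xxx}$ and $\mathtt{c}u_{xxx}=-\mathtt{c}(1-\partial_{xx})u_x+\mathtt{c}u_x$, the equation \eqref{DP} is equivalent to
\[
u_t + (\mathtt{c}+u)u_x + \partial_x(1-\partial_{xx})^{-1}\Big(\tfrac{3}{2}u^2+3\mathtt{c}\,u\Big)=0 .
\]
The point of this reformulation is that the genuinely quasi-linear part is only the transport term $(\mathtt{c}+u)u_x$, while $F(u):=\partial_x(1-\partial_{xx})^{-1}(\tfrac32 u^2+3\mathtt{c}u)$ is regularizing: since $\partial_x(1-\partial_{xx})^{-1}$ is a Fourier multiplier of order $-1$ and $H^s$ with $s>3/2$ is an algebra, $F$ maps $H^s$ into $H^s$ (indeed into $H^{s+1}$) with $\|F(u)\|_{H^s}\lesssim \|u\|_{H^s}^2+|\mathtt{c}|\,\|u\|_{H^s}$.

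Second, I would build approximate solutions by Friedrichs mollification: for $\varepsilon>0$ let $J_\varepsilon$ be the smoothing projector onto frequencies $\le 1/\varepsilon$ and consider the mollified problem $\partial_t u^\varepsilon=-J_\varepsilon\big((\mathtt{c}+J_\varepsilon u^\varepsilon)\partial_x J_\varepsilon u^\varepsilon\big)-J_\varepsilon F(J_\varepsilon u^\varepsilon)$, $u^\varepsilon(0)=u_0$. This is an ODE with locally Lipschitz right-hand side on the Banach space $H^s$, so the Cauchy--Lipschitz theorem yields a unique maximal solution $u^\varepsilon\in C^1([0,T_\varepsilon),H^s)$.

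Third, and this is the heart of the argument, I would derive energy estimates uniform in $\varepsilon$. Applying $\langle D_x\rangle^s$, pairing with $\langle D_x\rangle^s u^\varepsilon$ in $L^2$, and using that $J_\varepsilon$ is self-adjoint and idempotent, the contribution of $(\mathtt{c}+u)u_x$ splits, via the commutator $[\langle D_x\rangle^s,u]\partial_x$, into a Kato--Ponce piece bounded by $\|u_x\|_{L^\infty}\|u\|_{H^s}\lesssim\|u\|_{H^s}^2$ and a symmetric piece $\langle(\mathtt{c}+u)\partial_x\langle D_x\rangle^s u,\langle D_x\rangle^s u\rangle=-\tfrac12\langle u_x\langle D_x\rangle^s u,\langle D_x\rangle^s u\rangle$ after integration by parts, again $\lesssim\|u\|_{H^s}^3$ for $s>3/2$. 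Crucially, the constant-coefficient transport $\mathtt{c}\partial_x$ and the linear part $3\mathtt{c}\,\partial_x(1-\partial_{xx})^{-1}$ of $F$ are skew-adjoint and commute with $\langle D_x\rangle^s$, so they drop out of the energy identity entirely (this reflects the fact that the linear flow is an $H^s$-isometry, the linear frequencies $\omega(j)$ being real); the remaining quadratic part of $F$ contributes $\lesssim\|u\|_{H^s}^3$. Altogether
\[
\frac{d}{dt}\|u^\varepsilon\|_{H^s}^2\le C_s\,\|u^\varepsilon\|_{H^s}^3 ,
\]
and integrating this differential inequality yields a lifespan $\bar T=\bar T(\|u_0\|_{H^s})$ as in the statement, independent of $\varepsilon$, together with \eqref{estnormsol}.

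Finally, I would pass to the limit and prove uniqueness. Estimating the difference of two solutions in $L^2$ by a Gronwall argument (the transport term produces $\|u_x\|_{L^\infty}\|u^\varepsilon-u^{\varepsilon'}\|_{L^2}^2$, and the smoothing terms are controlled by boundedness of $F$) shows $(u^\varepsilon)$ is Cauchy in $C([-\bar T,\bar T],L^2)$; interpolating with the uniform $H^s$ bound gives strong convergence in $C([-\bar T,\bar T],H^{s'})$ for every $s'<s$ to a limit $u\in L^\infty([-\bar T,\bar T],H^s)$ solving the equation, and the same difference estimate applied to two genuine solutions gives uniqueness. Continuity in time with values in $H^s$ (rather than merely weak continuity) follows from the Bona--Smith technique \cite{BonaSmith}, comparing $u$ with the solutions emanating from mollified data. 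I expect the commutator/energy estimate of the third step, and in particular keeping all constants uniform in $\varepsilon$ while exploiting the skew-adjointness of the linear part, to be the principal technical obstacle; the time-reversibility of \eqref{DP} then extends everything to the symmetric interval $[-\bar T,\bar T]$.
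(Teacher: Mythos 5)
Your proposal is correct and its core coincides with the paper's proof, which itself follows \cite{HH}: your reformulation of \eqref{DP} as $u_t+(\mathtt{c}+u)u_x+\partial_x(1-\partial_{xx})^{-1}\big(\tfrac32 u^2+3\mathtt{c}u\big)=0$ is exactly the paper's \eqref{mollDP}--\eqref{Feps} with the mollifiers removed; the Friedrichs-regularized ODE on $H^s$, and the uniform-in-$\epsilon$ energy estimate via the Kato--Ponce commutator bound \eqref{KPineq} together with the observation that the linear terms are skew-adjoint Fourier multipliers commuting with $D^s$ (hence drop out of the energy identity), are verbatim the paper's Lemma on the lifespan and the bound \eqref{estnormsol}. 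The only genuine divergence is the tail of the argument. You pass to the limit by showing $(u^\epsilon)$ is Cauchy in $C(I;L^2)$, interpolating with the uniform $H^s$ bound to get convergence in every $C(I;H^{s'})$, $s'<s$, and recovering endpoint continuity in $H^s$ by the Bona--Smith technique \cite{BonaSmith}; uniqueness is run in $L^2$. The paper instead extracts a weak$^\star$ limit by Alaoglu, upgrades it through Ascoli--Arzel\`a to convergence in $C(I;H^{s-1})$, $C(I;H^{s-\sigma})$ and $C(I;C^1(\T))$, and proves continuity of $t\mapsto\|u(t)\|^2_{H^s}$ directly by approximating it with the functions $t\mapsto\|J_\epsilon u(t)\|^2_{H^s}$, shown to be Lipschitz with uniform constants via the commutator bound $\|[f,J_\epsilon]\partial_x g\|_{L^2}\le C\|f\|_{C^1}\|g\|_{L^2}$ of \cite{Taylor}; uniqueness is run in $H^\sigma$, $\sigma\in(1/2,s-1)$, using the Calder\'on--Coifman--Meyer estimate of \cite{TayCommEst}. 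Both finishes are standard and valid. Yours buys a constructive limit (no subsequence extraction) and sidesteps the compactness repair the paper needs on the line (a weighted Rellich argument, since $H^s\hookrightarrow H^{s-1}$ is not compact on $\mathbb{R}$); the paper's version avoids invoking the full Bona--Smith machinery at this stage, though it essentially redoes that argument when proving continuous dependence of the data-to-solution map. One small caveat on your route: the Cauchy-in-$L^2$ estimate for two \emph{different} mollification parameters still requires a mollifier commutator bound of the type just quoted (to handle terms like $J_\epsilon(v\partial_x v)-J_{\epsilon'}(v'\partial_x v')$ without losing a derivative), so you do not actually escape that lemma; this is a detail to fill in, not a gap in the strategy.
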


The proof of the above result can be found in the Appendix \ref{appendA}. 
It is based on a Galerkin-type approximation method, and follows closely the 
argument reported in \cite{HH} for the dispersionless DP equation 
\eqref{dispersionless}. 

\begin{remark} \label{smalldata}
Proposition \ref{esistesol2} implies that for any $s>3/2$ there exists $r(s)>0$ 
such that for any $u_0 \in B_{H^s}(0,r(s))$ Eq. \eqref{DP} with initial datum 
$u_0$ admits a solution $u(t,x)$ such that $\|u(t)\|_{H^s} \leq 2\|u_0\|_{H^s}$ 
for $|t| \leq \frac{1}{2C_s r(s)}$.
\end{remark}

The proof of Theorem \ref{esistenzaglobale} is based on the following bootstrap argument.

Consider the function $u(t,x)$ solution of \eqref{DP}, defined on some interval  $t\in[-{T},{T}]$ with $0<T\leq \bar{T}$
with initial datum $\|u(0;\cdot)\|_{H^{s}}\leq r_0$ 
given by Proposition \ref{esistesol2}. 

\vspace{0.8em}

By choosing $n:=n(s)=[s]-1$, if $r_0$ small enough, we have by item $(iii)$ of Theorem \ref{costantiMotoDP} and by Proposition \ref{esistesol2} that
\begin{align} \label{bound1}
\|u(t,\cdot)\|_{H^{s}}^{2}\leq \|u(\cdot)\|_{L^{2}}^{2}+\tilde{c} |K_{n}^{(0)}(u(t,\cdot))| &\leq \|u(\cdot)\|_{L^{2}}^{2}+\tilde{c} |K_{n}(u(t,\cdot))|+ \tilde{c} C\|u(t,\cdot)\|_{H^{s}}\|u(t,\cdot)\|^{2}_{H^{s}},
\end{align}
with $\tilde{c},C$ given by Theorem \ref{costantiMotoDP}.
Since 
\begin{align*}
K_1^{(0)}(u(t,\cdot)) &= \int (u-u_{xx})^2 dx \\
&= \int u^2 dx + 2 \int u_x^2  dx + \int u_{xx}^2 dx
%K_1^{(0)}(u(t,\cdot)) &= \sum_{ j \in \mathbb{Z} \setminus \{0\} } (1+|j|^2)^2 |u_j|^2 \\
%&= \sum_{ j \in \mathbb{Z} \setminus \{0\} } |u_j|^2 + 2 \sum_{ j \in \mathbb{Z} \setminus \{0\} } |j|^2 |u_j|^2 + \sum_{ j \in \mathbb{Z} \setminus \{0\} } |j|^4 |u_j|^2,
\end{align*}

by \eqref{stimecostanti} we have that for $r_0$ small enough
\begin{align} \label{boundL2}
\|u(t,\cdot)\|^2_{L^2} &\leq C |K_1(u(t,\cdot))|.
\end{align}

Since $K_1(u),K_{n}(u)$ are constant of motion, we have
\begin{equation}\label{valesempre}
|K_{1}(u(t,\cdot))|+|K_{n}(u(t,\cdot))|\leq |K_{1}(u(0,\cdot))|+|K_{n}(u(,\cdot))|\stackrel{(\ref{equivalenzaNorma}),(\ref{stimecostanti})}{\leq} \ka(s)\|u(0,\cdot)\|^{2}_{H^{s}}\leq \ka(s)r_0^{2},
\end{equation}
for some $\ka(s)>0$
depending only on $s$. The bound \eqref{bound1} reads
\begin{equation}\label{bound1bis}
\|u(t,\cdot)\|_{H^{s}}^{2}\leq\tilde{c}\ka(s)r_0^{2}+\tilde{c} C\|u(t,\cdot)\|_{H^{s}}\|u(t,\cdot)\|^{2}_{H^{s}}.
\end{equation}

Now let  $\widehat{T}$ be the supremum of those $T$ such that the solution $u(t,x)$ is defined on $[-T,T]$ and 
\begin{equation}\label{assurdo}
\sup_{t\in [-T,T]}\|u(t,\cdot)\|^{2}_{H^{s}}\leq  Q(s)r^2_0, 
\end{equation}
where $Q(s)\geq 4 \ka(s)\tilde{c}$, with $\ka(s)$  given in \eqref{valesempre} and $\tilde{c}$ given by Theorem \ref{costantiMotoDP}.
For $t\in [-\hat{T},\hat{T}]$ we deduce, by \eqref{bound1bis}, that
\begin{equation}\label{bound1tris}
\|u(t,\cdot)\|_{H^{s}}^{2}\leq \tilde{c}\ka(s)r_0^{2}+\tilde{c}C\sqrt{Q(s)} r_0\|u(t,\cdot)\|^{2}_{H^{s}}.
\end{equation}
Hence, if we take $r_0$ sufficiently small such that $\tilde{c}C\sqrt{Q(s)} r_0\leq 1/2$, we obtain
\begin{equation}\label{speriamobene}
\|u(t,\cdot)\|_{H^{s}}^{2}\leq 2\tilde{c} r_0^{2}<Q(s)r_0^{2}.
\end{equation}
Of course estimate \eqref{speriamobene} leads to the  contradiction of the fact that $\widehat{T}$ is the supremum.
Since estimate \eqref{speriamobene} does not depend on $\hat{T}$, we must have $\widehat{T}=+\infty$, which implies Theorem \ref{esistenzaglobale}.

\section{Birkhoff resonances}\label{proofteoBirk}

In this Section we prove Theorem \ref{teoBirk}. First
we need some preliminaries definitions and results to show the formal Birkhoff normal form procedure. Recall the definitions given in Section \ref{prelimi} for the space of formal polynomials.

\begin{defi}{(\bf Poisson brackets)}
Let $P=\sum_{\alpha\in\mathcal{I}_n} P_{\alpha} u^{\alpha}$ and $Q=\sum_{\beta\in\mathcal{I}_m} Q_{\beta} u^{\beta}$ (recall \eqref{definterval}) two formal homogenous polynomials.
We define $\{\,\cdot\,,\, \cdot\, \}\colon \mathscr{P}^{(n)}\times \mathscr{P}^{(m)}\to \mathscr{F}$
\begin{equation}\label{poisson}
\{ P, Q\}:=\sum_{\alpha\in\mathcal{I}_n, \beta\in\mathcal{I}_m} P_{\alpha} Q_{\beta}\,\{ u^{\alpha}, u^{\beta}\}=\sum_{\alpha\in\mathcal{I}_n, \beta\in\mathcal{I}_m} \sum_{j} \big((-\mathrm{i}) \omega(j) \alpha_j \beta_{-j} \big) P_{\alpha} Q_{\beta} \,u^{\alpha+\beta-\mathtt{e}_j-\mathtt{e}_{-j}}
\end{equation}
where $\mathtt{e}_k$ is the element of $\mathbb{N}^{\mathbb{Z}}$ with all components equal to zero except for the $k$-th one, which is equal to $1$.
\end{defi}
In the following lemma we prove that the above definition is well posed. We point out that the assumption of zero momentum (see \eqref{definterval}) is a key ingredient for the proof.
\begin{lem}
Given $P\in\mathscr{P}^{(n)}$, $Q\in\mathscr{P}^{(m)}$ we have that
\begin{itemize}
\item[$(i)$] the sum $P+Q\in\mathscr{P}^{({\max\{m, n\}})}$.
\item[$(ii)$] the Poisson bracket $\{ P, Q\}\in\mathscr{P}^{(n+m)}$.
\end{itemize}
\end{lem}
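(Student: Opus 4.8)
The plan is to treat (i) as an immediate bookkeeping fact and to concentrate on (ii), where the zero–momentum constraint does the real work. For (i), I would simply observe that each $\mathscr{P}^{(k)}$ is a complex vector space under coefficientwise addition of the families $(P_\alpha)_{\alpha\in\mathcal{I}_k}$; hence, reading $P+Q$ inside $\mathscr{F}$ via the natural inclusions, its only nonzero homogeneous components sit in degrees $n$ and $m$, so that $P+Q\in\mathscr{P}^{(\le\max\{m,n\})}$, which reduces to $\mathscr{P}^{(\max\{m,n\})}$ in the homogeneous case $n=m$.

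For (ii) the starting point is bilinearity of \eqref{poisson}: it suffices to analyse a single pair of monomials and then reassemble. So I fix $\alpha\in\mathcal{I}_n$, $\beta\in\mathcal{I}_m$ and look at
\[
\{u^\alpha,u^\beta\}=\sum_{j}(-\mathrm{i})\,\omega(j)\,\alpha_j\beta_{-j}\,u^{\alpha+\beta-\mathtt{e}_j-\mathtt{e}_{-j}}.
\]
A term survives only when $\alpha_j\geq1$ and $\beta_{-j}\geq1$, and moreover the contribution of $j=0$ vanishes because $\omega(0)=0$; thus I may assume $j\neq0$, so that $j$ and $-j$ are distinct indices. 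For such a surviving $j$ put $\gamma:=\alpha+\beta-\mathtt{e}_j-\mathtt{e}_{-j}$. I would then check that $\gamma\in\mathcal{I}_{n+m}$ by verifying the three defining conditions: (a) $\gamma\in\mathbb{N}^{\mathbb{Z}}$, since $\alpha_j\geq1$ gives $\gamma_j\geq0$, $\beta_{-j}\geq1$ gives $\gamma_{-j}\geq0$, and all other components are sums of non-negative integers; (b) the degree, $\lvert\gamma\rvert=\lvert\alpha\rvert+\lvert\beta\rvert-2=(n+2)+(m+2)-2=(n+m)+2$; and (c) the momentum, $\mathcal{M}(\gamma)=\mathcal{M}(\alpha)+\mathcal{M}(\beta)-j-(-j)=0$, where the crucial input is precisely $\mathcal{M}(\alpha)=\mathcal{M}(\beta)=0$. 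Hence every monomial produced by the bracket is indexed by $\mathcal{I}_{n+m}$.

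It then remains to show that $\{P,Q\}$ is a \emph{well defined} element of $\mathscr{P}^{(n+m)}$, i.e.\ that for each fixed $\gamma\in\mathcal{I}_{n+m}$ only finitely many triples $(\alpha,\beta,j)$ contribute to the coefficient of $u^\gamma$; this is the step I expect to be the main obstacle, and it is exactly where zero momentum is indispensable. Set $K:=\max\{\lvert k\rvert:\gamma_k\neq0\}$, which is finite. For any contributing triple one has $\alpha+\beta=\gamma+\mathtt{e}_j+\mathtt{e}_{-j}$; when $\lvert j\rvert>K$ this forces $\alpha_j=1$ and $\alpha_{-j}=0$, so that every remaining index $k\neq j$ with $\alpha_k>0$ lies in $\mathrm{supp}(\gamma)$, whence, using $\mathcal{M}(\alpha)=0$,
\[
\lvert j\rvert=\Big\lvert\sum_{k\neq j}k\,\alpha_k\Big\rvert\leq K\sum_{k\neq j}\alpha_k=K\,(\lvert\alpha\rvert-1)=K\,(n+1).
\]
Thus in all cases $\lvert j\rvert\leq K(n+1)$, so $j$ ranges over a finite set; and for each such $j$ the fixed multi-index $\gamma+\mathtt{e}_j+\mathtt{e}_{-j}$ of finite support admits only finitely many splittings $\alpha+\beta$. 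Consequently each coefficient of $\{P,Q\}$ is a finite sum of complex numbers, and combining this with the membership $\gamma\in\mathcal{I}_{n+m}$ proved above gives $\{P,Q\}\in\mathscr{P}^{(n+m)}$. The only genuinely nontrivial point is the finiteness bound just described; the degree and momentum computations are routine.
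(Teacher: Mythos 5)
Your proof is correct and takes essentially the same route as the paper's: both reduce, via bilinearity, to showing that for each fixed $\gamma$ the zero-momentum constraint bounds the admissible indices $j$, after which only finitely many splittings $\alpha+\beta=\gamma+\mathtt{e}_j+\mathtt{e}_{-j}$ can contribute to the coefficient of $u^\gamma$. The only (immaterial) difference is the explicit bound on $j$: you derive $\lvert j\rvert\le K(n+1)$ from the support of $\gamma$ and $\mathcal{M}(\alpha)=0$, whereas the paper bounds $\lvert j\rvert\le\sum_{k>0}k\gamma_k$ by noting that any summand in a decomposition of $\gamma$ has momentum at most that quantity.
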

\begin{proof}
The item $(i)$ is trivial. We prove item $(ii)$. We write $P=\sum_{\alpha\in\mathcal{I}_n} P_{\alpha} u^{\alpha}$ and $Q=\sum_{\beta\in\mathcal{I}_m} Q_{\beta} u^{\beta}$.\\
Recalling \eqref{poisson}, we have to prove the following claim: given $\gamma\in\mathcal{I}_{n+m-2}$ there is only a finite number of $\alpha, \beta, j$ such that 
\begin{equation}\label{bob}
\gamma=\alpha+\beta-\mathtt{e}_j-\mathtt{e}_{-j}, \quad \alpha_j \beta_{-j}\neq 0,
\end{equation}
where we denoted by $\mathtt{e}_j$ the element of $\mathbb{N}^{\mathbb{Z}}$ with all components zero except for the $j$-th, which is $1$.
Indeed, if this holds, there exists a sequence $(R_{\gamma})_{\gamma\in\mathcal{I}_{n+m-2}}$ of complex numbers such that
\[
\{ P, Q\}=\sum_{\gamma\in \mathcal{I}_{m+n-2}} R_{\gamma}\,u^{\gamma}.
\]
First we observe the following: given $\gamma\in\mathcal{I}_{k}$, for some $k\geq 2$, there exist only finitely many couples $(a, b)\in\mathbb{N}^{\mathbb{Z}}\times \mathbb{N}^{\mathbb{Z}}$ such that we can decompose $\gamma=a+b$. Note that $\mathcal{M}(\gamma)=\mathcal{M}(a)+\mathcal{M}(b)=0$. We call $\mathtt{M}^{\gamma}:=\sum_{j>0} j \gamma_j$ and we observe that, for any choice of $a$ and $b$, we have $\lvert \mathcal{M}(a) \rvert\le \mathtt{M}^{\gamma}$.\\
We can choose $a$ and $b$ such that $\mathcal{M}(a)=j$, for instance,
\[
a=\alpha-\mathtt{e}_{-j}, \quad b=\beta-\mathtt{e}_{j}.
\]
hence $\lvert j \rvert\le \mathtt{M}^{\gamma}$. So given $\gamma$ there is only a finite number of $j$ for which  \eqref{bob} holds.\\
Now we prove that, given $\gamma$ and $j$, there is only a finite number of $\alpha$ and $\beta$ such that \eqref{bob} holds and so the claim is proved. We have $\gamma+\mathtt{e}_j+\mathtt{e}_{-j}=\alpha+\beta$. Hence we have to split the left-hand side in two elements of $\mathbb{N}^{\mathbb{Z}}$ with zero momentum. If $\gamma=a+b$ with $(a, b)\in\mathbb{N}^{\mathbb{Z}}\times \mathbb{N}^{\mathbb{Z}}$, then there is only a finite number of choices for $a$ and $b$, which are
\[
a=\alpha-\mathtt{e}_{\pm j}, \quad b=\beta-\mathtt{e}_{\mp j}.
\]

\end{proof}

\paragraph{Adjoint action and quadratic Hamiltonians.}

Let $G\in\mathscr{P}^{(m)}$, $m\geq 0$. We define the \emph{adjoint action} of $G$ as 
\[
\mathrm{ad}_G \colon \mathscr{P}^{(n)}\to \mathscr{P}^{(m+n)} \subset \mathscr{F}^{(\geq m)},\qquad
\mathrm{ad}_G[P]:=\{ G, P\},
\]
then we extend it to the entire $\mathscr{F}$ by setting
\[
\mathrm{ad}_G [P]:=\big(\mathrm{ad}_G [P^{(n)}] \big)_{n\geq 0}
\]
with
\[
\Pi^{(d)} \mathrm{ad}_G[P]=
\begin{cases}
\sum_{n=d-m} \{ G^{(m)}, P^{(n)} \} \quad \mbox{if}\,\, d\geq m,\\
0 \qquad \mbox{otherwise}
\end{cases}
\]
for $d\geq 0$. Note that $\Pi^{(d)} \mathrm{ad}_G[P]\in\mathscr{P}^{(d)}$ since the above sum is finite, so $\mathrm{ad}_G$ is well defined on $\mathscr{F}$.\\
We define the kernel and range of $\mathrm{ad}_G$ as
\[
Ker(G):=\{ F\in\mathscr{F} : \{ F, G\}=0\}, \quad Rg(G):=\{ F\in\mathscr{F} : \{ F, G\}\neq 0\}.
\]
Consider a quadratic Hamiltonian $G$ in diagonal form, $G=\sum_{j} \lambda(j) \lvert u_j \rvert^2$, $\lambda(j)\in\mathbb{C}$. Given $\alpha\in\mathbb{N}^{\mathbb{Z}}$ we define the associated $G$-divisor as (recall \eqref{dispersionLaw})
\begin{equation}\label{Gdivisor}
\Omega_G(\alpha)=\sum_j \omega(j) \lambda(j) \alpha_j
\end{equation}
% If $\Omega_G(\alpha)=0$ then we say that $\alpha$ is $G$-resonant and we write $\alpha\in\mathcal{N}_G$.\\
and we have
 \[
 \{ G(u), u^{\alpha}\}=\Big(-i \sum_{j} \omega(j)\lambda(j) \alpha_j\Big) u^{\alpha}=-i \Omega_{G}(\al)u^{\al}.
 \]
 \begin{defi}
We define  $\Pi_{{\rm Ker}(G)}$ as the  projector on the kernel of the adjoint action ${\rm ad}_{G}[\cdot]$, i.e.
\[
\forall \,\al\in \mathcal{I}_{n}\quad \Pi_{{\rm Ker}(G)}(u^{\al}):=\left\{
\begin{aligned}
&u^{\al}\;\;{\rm if}\;\; \Omega_G(\al)\neq0,\\
&0
\;\;\;\; {\rm if}\;\; \Omega_G(\al)=0.
\end{aligned}\right.
\]
We define the projector on the range of the adjoint action as $\Pi_{{\rm Rg}(G)}:=\mathrm{I}-\Pi_{{\rm Ker}(G)}$
where  $\mathrm{I}$ is the identity.
We define the action of $\Pi_{{\rm Rg}(G)}$ and $\Pi_{{\rm Ker}(G)}$ on any Hamiltonian $H\in \mathscr{P}^{(n)}$ by linearity.
\end{defi}

\paragraph{Exponential map and Lie transformation.}
Let $G\in\mathscr{P}^{(m)}$ with $m\geq 1$. We note that for $k\geq 0$
\[
\mathrm{ad}^k_G\colon \mathscr{P}^{(n)}\to \mathscr{P}^{(n+k m)}\subset \mathscr{F}^{(\geq n+k m)}
\]
We define the exponential map $e^{\{G, \cdot}\colon \mathscr{F}\to \mathscr{F}$ as
\[
e^{\{G, \cdot} P:=\Big(\sum_{k\geq 0} \frac{\mathrm{ad}^{k}_G[P^{(n)}]}{k!}\Big)_{n\geq 0}
\]
and it is well-defined since
\[
\Pi^{(d)} e^{\{G, \cdot} P=
\sum_{(n, k)\,:\,n+k m=d} \frac{\mathrm{ad}^k_G[P^{(n)}] }{k!}
\]
but there is only a finite number of couples $(n, k)\in\mathbb{N}^2$ such that $n+m k=d$ ($d \geq 0$ and $m\geq 1$ are fixed). Hence $\Pi^{(d)} e^{\{G, \cdot} P\in\mathscr{P}^{(d)}$.

\smallskip

Let $\chi\in\mathscr{P}^{(n)}$ and $H\in\mathscr{F}$ be two Hamiltonians. We call $\Phi_{\chi}^t$ the flow of $\chi$, namely
\[
\begin{cases}
\dfrac{d}{d t} \Phi_{\chi}^t(u)=J \nabla \chi(u),\\[2mm]
\Phi_{\chi}^0(u)=u.
\end{cases}
\]
We have
\[
\frac{d^k}{d t^k} \Big( H \circ \Phi_{\chi}^t\Big)=\mathrm{ad}_{\chi}^k[H]\circ \Phi_{\chi}^t
\]
Then by expanding in the (formal) Taylor series at $t=0$ we get
\[
H\circ \Phi_{\chi}^t:=\sum_{k\geq 0} \frac{\mathrm{ad}^k_{\chi}[H]}{k!}\,t^k.
\]
We call Lie transformation the map at time one $\Phi_{\chi}^{t=1}=:\Phi_{\chi}$ and we have
\[
H\circ \Phi_{\chi}:=\sum_{k\geq 0} \frac{\mathrm{ad}^k_{\chi}[H]}{k!}=e^{\{ \chi, \cdot} H.
\]

\subsection{Proof of Theorem \ref{teoBirk}}

Let $H\in\mathscr{P}^{(\le n)}$ be a Hamiltonian. The goal of the formal Birkhoff normal form procedure is to construct a change of coordinates $\Phi$ which puts the Hamiltonian $H$ in a Birkhoff normal form of some order (recall \eqref{BirkhoffFormN}). This algorithm consists of different steps.
%At the first step we apply a Birkhoff transformation $\Phi_{G_1}$ with $G_1$ such that $\Pi^{(3)} e^{\{ G_1, \cdot} H$ equals $\Pi_{Ker(H^{(2)})} H^{(3)}$.\\
%At the second step we apply a Birkhoff transformation $\Phi_{G_2}$ with $G_2$ such that $\Pi^{(4)} e^{\{ G_2, \cdot} e^{\{ G_1, \cdot} H$ equals $\Pi_{Ker(H^{(2)})} \Pi^{(3)} e^{\{ G_1, \cdot} H$.\\
If we denote by $\chi_k\in\mathscr{P}^{(k)}$ the generators of the Birkhoff transformation at the step $k$ and with
\[
H_0:=H, \quad H_k:=e^{\{ \chi_k}\cdots e^{\{\chi_1, \cdot} H \quad k>0,
\]
then we have to choose $\chi_{k+1}$ such that
\begin{equation}\label{beam}
 \Pi^{(k+1)}e^{\{ \chi_{k+1}, \cdot} H_k=\Pi_{Ker(H^{(0)})} \Pi^{(k+1)} H_k=\Pi_{Ker(H^{(0)})}  H_k^{(k+1)}
\end{equation}
The right-hand side of \eqref{beam} contributes to the normal form of $H_{k+1}$. We want to show that this homogenous polynomial of degree $k+3$ is supported on $\mathcal{N}_{k+1}^*$ (recall Definition \ref{defrisonanza}).

\medskip

%\paragraph{Commuting Hamiltonians.}
%
%Suppose that $F(u)=\sum_{k=2}^{\infty} F^{(k)}, F^{(k)}\in\mathcal{P}_k^{hom}$ and $G(u)=\sum_{k=2}^{\infty}G^{(k)}, G^{(k)}\in\mathcal{P}_k^{hom}$ are two commuting Hamiltonians, namely $\{ F, G \}=0$.\\
%We note that by the relation $\{F, G\}=0$ we have, for any fixed $k\geq 2$,
%\[
%\sum_{\substack{k_1+k_2-2=k,\\ k_1, k_2\geq 2}} \{ F^{(k_1)}, G^{(k_2)} \}=0.
%\]

First we prove that, given a finite set of Hamiltonians in involution (namely which pairwise commute) and fixed $N$, there exists a Birkhoff transformation $\Phi_N$ which puts all these Hamiltonians in Birkhoff normal form of order $N$ according to Definition \ref{defBirkformN}.  It is sufficient to prove that for two commuting Hamiltonians.

\begin{lem}\label{lemmabellissimo}
Consider $H, K\in\mathscr{F}$ two commuting Hamiltonians. For any $N$ there exist, at least formally, a change of coordinates $\Phi_N$ such that
\begin{equation}
H\circ \Phi_N=H^{(0)}+Z_{N}+R_N, \quad K\circ \Phi_N=K^{(0)}+W_{N}+Q_N
\end{equation}
where $Z_N, W_N\in\mathcal{P}^{(N)}$ commuting with $H^{(0)}$ and $K^{(0)}$. $R_N, Q_N\in\mathscr{F}^{(\geq N+1)}$.
\end{lem}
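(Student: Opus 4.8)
The plan is to normalize $H$ first by the iterative scheme just described, transport the \emph{same} generators to $K$, and then exploit the commutation $\{H,K\}=0$ together with the semisimplicity of the adjoint actions. No lower bound on the divisors $\Omega_{H^{(0)}}(\alpha),\Omega_{K^{(0)}}(\alpha)$ (recall \eqref{Gdivisor}) will be used, which is exactly why the non-resonance hypothesis of Theorem G.2 in \cite{KdVeKAM} can be dropped. By the reduction already stated it suffices to treat two Hamiltonians. Setting $H_0:=H$ and assuming $\Pi^{(j)}H_k\in{\rm Ker}(\mathrm{ad}_{H^{(0)}})$ for $1\le j\le k$, I choose $\chi_{k+1}\in\mathscr{P}^{(k+1)}$, supported on ${\rm Rg}(\mathrm{ad}_{H^{(0)}})$, as the solution of $\mathrm{ad}_{H^{(0)}}[\chi_{k+1}]=\Pi_{{\rm Rg}(H^{(0)})}H_k^{(k+1)}$; this is solvable because $\mathrm{ad}_{H^{(0)}}$ acts on each monomial $u^\alpha$ as multiplication by $-\mathrm{i}\,\Omega_{H^{(0)}}(\alpha)$ and is therefore invertible on its range. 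Since $\chi_{k+1}$ raises the $\mathscr{P}$-degree by $k+1\ge 1$, the Lie transformation $e^{\{\chi_{k+1},\,\cdot\,\}}$ leaves all components of degree $<k+1$ untouched and realizes \eqref{beam}. After $N$ steps the composition $\Phi_N$ of these Lie transformations gives $H\circ\Phi_N=H^{(0)}+Z_N+R_N$ with $Z_N:=\Pi^{(\le N)}(H\circ\Phi_N-H^{(0)})\in{\rm Ker}(\mathrm{ad}_{H^{(0)}})$ and $R_N\in\mathscr{F}^{(\ge N+1)}$.

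The heart of the matter is that this same $\Phi_N$ already normalizes $K$ with respect to $H^{(0)}$. Writing $\widetilde H:=H\circ\Phi_N=\sum_d\widetilde H^{(d)}$ and $\widetilde K:=K\circ\Phi_N=\sum_d\widetilde K^{(d)}$, three facts drive the argument: ${\rm Ker}(\mathrm{ad}_{H^{(0)}})$ is a Poisson subalgebra, since $\mathrm{ad}_{H^{(0)}}$ is a derivation of the bracket by the Jacobi identity; the quadratic parts commute, $\{H^{(0)},K^{(0)}\}=0$ being the lowest-order component of $\{H,K\}=0$, so that $K^{(0)}\in{\rm Ker}(\mathrm{ad}_{H^{(0)}})$; and each $\mathscr{P}^{(d)}$ splits as the direct sum of ${\rm Ker}(\mathrm{ad}_{H^{(0)}})$ and ${\rm Rg}(\mathrm{ad}_{H^{(0)}})$ because $\mathrm{ad}_{H^{(0)}}$ is diagonal on monomials. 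As a symplectic map preserves the bracket, $\{\widetilde H,\widetilde K\}=0$, and its degree-$d$ component reads
\[
\mathrm{ad}_{H^{(0)}}[\widetilde K^{(d)}]=-\{\widetilde H^{(d)},K^{(0)}\}-\sum_{\substack{a+b=d\\ a,b\ge 1}}\{\widetilde H^{(a)},\widetilde K^{(b)}\}.
\]
Arguing by induction on $d$: $\widetilde H^{(a)}\in{\rm Ker}(\mathrm{ad}_{H^{(0)}})$ by construction, $\widetilde K^{(b)}\in{\rm Ker}(\mathrm{ad}_{H^{(0)}})$ for $b<d$ by the inductive hypothesis, and $K^{(0)}\in{\rm Ker}(\mathrm{ad}_{H^{(0)}})$; hence every term on the right lies in ${\rm Ker}(\mathrm{ad}_{H^{(0)}})$, while the left lies in ${\rm Rg}(\mathrm{ad}_{H^{(0)}})$, so by the direct-sum splitting both vanish and $\widetilde K^{(d)}\in{\rm Ker}(\mathrm{ad}_{H^{(0)}})$ for all $1\le d\le N$.

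To obtain commutation with $K^{(0)}$ as well I would add a second stage carried out \emph{inside} the subalgebra ${\rm Ker}(\mathrm{ad}_{H^{(0)}})$, where $H^{(0)}$ is central and $\mathrm{ad}_{K^{(0)}}$ is again semisimple with kernel and range in direct sum. I normalize $\widetilde K$ against $K^{(0)}$ using generators chosen in ${\rm Ker}(\mathrm{ad}_{H^{(0)}})$; since they commute with $H^{(0)}$, the corresponding Lie transformations fix $H^{(0)}$ and preserve the first-stage normalization, while bringing each $\widetilde K^{(d)}$ into ${\rm Ker}(\mathrm{ad}_{K^{(0)}})$. Running the degree-$d$ identity above once more, now with $\{\widetilde K^{(d)},H^{(0)}\}=0$ and all lower-order factors already in ${\rm Ker}(\mathrm{ad}_{K^{(0)}})$, the same kernel/range dichotomy forces $\widetilde H^{(d)}\in{\rm Ker}(\mathrm{ad}_{K^{(0)}})$ too. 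Collecting the two stages into a single $\Phi_N$, the truncations $Z_N,W_N\in\mathscr{P}^{(\le N)}$ of $\widetilde H-H^{(0)}$ and $\widetilde K-K^{(0)}$ both lie in ${\rm Ker}(\mathrm{ad}_{H^{(0)}})\cap{\rm Ker}(\mathrm{ad}_{K^{(0)}})$, hence commute with $H^{(0)}$ and $K^{(0)}$, while the tails $R_N,Q_N\in\mathscr{F}^{(\ge N+1)}$.

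I expect the main obstacle to be exactly this cross-commutation. The first stage alone leaves $\widetilde H$ with components that commute with $H^{(0)}$ but in general \emph{not} with $K^{(0)}$ (a monomial with $\Omega_{H^{(0)}}(\alpha)=0$ and $\Omega_{K^{(0)}}(\alpha)\neq 0$ is unconstrained by $\{\widetilde H,\widetilde K\}=0$), so the symmetric conclusion genuinely requires the second normalization. The delicate point is to verify that this second step, performed within ${\rm Ker}(\mathrm{ad}_{H^{(0)}})$, does not reintroduce any ${\rm Rg}(\mathrm{ad}_{H^{(0)}})$ component: this is guaranteed precisely because ${\rm Ker}(\mathrm{ad}_{H^{(0)}})$ is a Poisson subalgebra and the stage-two generators are taken inside it, and it is this algebraic structure — rather than any arithmetic non-resonance — that makes the scheme work at every order $N$.
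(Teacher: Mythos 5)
Your proof is correct, but it follows a genuinely different route from the paper's. The paper runs a single induction: at each degree one generator $\chi_{N+1}$ is taken from the homological equation for $H$, and the identities \eqref{identita} and \eqref{prisoner} --- which you also derive, from the transported bracket relation and the Jacobi identity --- are invoked to argue that the \emph{same} $\chi_{N+1}$ simultaneously normalizes $K$, the inductive hypothesis being that $Z_N,W_N$ already lie in $Ker(H^{(0)})\cap Ker(K^{(0)})$. You instead work in two stages: normalize $H$ alone; observe that the preserved bracket together with the fact that $Ker(\mathrm{ad}_{H^{(0)}})$ is a Poisson subalgebra forces the transformed $K$ to be $H^{(0)}$-normal for free; then normalize $K$ against $K^{(0)}$ with generators chosen inside $Ker(\mathrm{ad}_{H^{(0)}})$, which cannot undo the first stage; and finally recover $\{\widetilde H^{(d)},K^{(0)}\}=0$ by the same kernel/range dichotomy. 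The paper's scheme is more economical --- one generator per degree, which moreover works verbatim for a whole finite family of Hamiltonians commuting with $H$ --- but your scheme is more robust exactly at the point you flag, namely the monomials with $\Omega_{H^{(0)}}(\alpha)=0$ and $\Omega_{K^{(0)}}(\alpha)\neq 0$. Indeed \eqref{prisoner} yields $\Pi_{Rg(K^{(0)})}\Pi_{Ker(H^{(0)})}R_N^{(N+1)}=0$, not $\Pi_{Ker(K^{(0)})}\Pi_{Rg(H^{(0)})}R_N^{(N+1)}=0$, so the paper's asserted equality $\Pi_{Rg(H^{(0)})}R_N^{(N+1)}=\Pi_{Rg(K^{(0)})}\Pi_{Rg(H^{(0)})}R_N^{(N+1)}$ does not actually follow; and with either choice of generator the contribution $\Pi_{Rg(K^{(0)})}\Pi_{Ker(H^{(0)})}Q_N^{(N+1)}$ survives in $W_{N+1}$, is unconstrained by \eqref{identita}, and need not commute with $K^{(0)}$, so the paper's induction does not visibly close as written. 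Your second stage --- an extra generator supported on $Ker(\mathrm{ad}_{H^{(0)}})\cap Rg(\mathrm{ad}_{K^{(0)}})$ --- is precisely what removes this term; in this sense your two-stage argument completes a step that the paper's one-generator proof glosses over.
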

\begin{proof}
We argue by induction on the number of steps $N$. For $N=0$ it is trivial since $\Phi_0$ is the identity map.\\
Suppose that we have performed $N$ steps. By the fact that $\{ H, K \}=0$ then $\{ H, K\}\circ \Phi_N=0$ and so, at each order, we have
\begin{align*}
&\{ H^{(0)}, K^{(0)}\}=0,\\
&\{ H^{(0)}, W_N\}+\{ Z_N, K^{(0)}\}+\Pi^{(\le N)}\{  Z_N, W_N\}=0,\\
&\Pi^{(N+1)} \{  Z_N, W_N\}+\{ H^{(0)}, Q_N^{(N+1)}\}+\{ R_N^{(N+1)}, K^{(0)} \}=0,\\
&\dots
\end{align*}
By the inductive hypothesis $W_N, Z_N\in Ker(H^{(0)})\cap Ker(K^{(0)})$, hence $\{ H^{(0)}, W_N\}=\{ Z_N, K^{(0)}\}=0$ and 
\begin{equation}\label{identita}
\{ H^{(0)}, Q_N^{(N+1)}\}+\{ R_N^{(N+1)}, K^{(0)} \}=0
\end{equation}
since $\{ H^{(0)}, Q_N^{(N+1)}\}\in Rg(H^{(0)})$ and $\{ R_N^{(N+1)}, K^{(0)} \}\in Rg(K^{(0)})$.

\smallskip

We note the following fact, which derives from the Jacobi identity: if $f\in Ker(H^{(0)})$ then $\{ f, K^{(0)}\}\in Ker(H^{(0)})$.\\
Then we have that $\{ \Pi_{Ker(H^{(0)})} R_N^{(N+1)}, K^{(0)}  \}\in Ker(H^{(0)})$ and by \eqref{identita}
\[
\{ \Pi_{Ker(H^{(0)})} R_N^{(N+1)}, K^{(0)}  \}=-\{ \Pi_{Rg(H^{(0)})} R_N^{(N+1)}, K^{(0)}  \}+\{ H^{(0)}, Q_N^{(N+1)}\}\in Rg(H^{(0)}).
\]
Thus $\{ \Pi_{Ker(H^{(0)})} R_N^{(N+1)}, K^{(0)}  \}=0$ and 
\[
\Pi_{Ker(H^{(0)})} R_N^{(N+1)}=\Pi_{Ker(H^{(0)})} \Pi_{Ker(K^{(0)})} R_N^{(N+1)}.
\]
By symmetry $\Pi_{Ker(K^{(0)})} Q_N^{(N+1)}=\Pi_{Ker(H^{(0)})} \Pi_{Ker(K^{(0)})} Q_N^{(N+1)}$. Hence
\begin{equation}\label{prisoner}
\Pi_{Rg(H^{(0)})} \Pi_{Ker(K^{(0)})} Q_N^{(N+1)}=\Pi_{Rg(K^{(0)})} \Pi_{Ker(H^{(0)})} R_N^{(N+1)}=0.
\end{equation}
In order to obtain the Birkhoff normal form at order $N+1$ we consider a Birkhoff transformation $\Phi_{\chi_{n+1}}$ with generator $\chi_{N+1}\in \mathscr{P}^{(N+1)}$ and we define $\Phi_{N+1}:=\Phi_N\circ \Phi_{\chi_{n+1}}$. The function $\chi_{N+1}$ is chosen in order to solve the homological equation
\[
\{ H^{(0)}, \chi_{N+1}\}=-\Pi_{Rg(H^{(0)})} R_N^{(N+1)} \stackrel{(\ref{prisoner})}{=}-\Pi_{Rg(K^{(0)})}\Pi_{Rg(H^{(0)})} R_N^{(N+1)}.
\]
We now show that $\chi_{N+1}$ solves also the homological equation for $K_N$. Indeed, by the fact that $\mathrm{ad}_{H^{(0)}}^{-1}$ commutes with $\mathrm{ad}_{K^{(0)}}$ on the intersection $Rg(H^{(0)})\cap Rg(K^{(0)})$
\[
\{ K^{(0)}, \chi_{N+1}\}=-\mathrm{ad}_{H^{(0)}}^{-1}\{ K^{(0)}, \Pi_{Rg(K^{(0)})}\Pi_{Rg(H^{(0)})} R_N^{(N+1)} \}
\]
and by \eqref{identita}, \eqref{prisoner} we get
\[
\{ K^{(0)}, \Pi_{Rg(K^{(0)})}\Pi_{Rg(H^{(0)})} R_N^{(N+1)} \}=\{H^{(0)}, \Pi_{Rg(K^{(0)})}\Pi_{Rg(H^{(0)})} Q_N^{(N+1)}  \}.
\]
\end{proof}

Suppose that we performed $N$ Birkhoff steps. Then the transformed Hamiltonian is
\[
H_N=H^{(0)}+Z_N+R_N, \quad Z_N\in\mathscr{P}^{(\le N)}, R_N\in\mathscr{F}^{(\geq N+1)}
\]
where $Z_N$ is action-preserving.\\
If we perform the $(N+1)$-th step then the term $\Pi_{Ker(H^{(0)})}  R_N^{(N+1)}\in\mathscr{P}^{(N+1)}$ contributes to the normal form. We have to show the following claim

\smallskip

\begin{itemize}
\item \textit{The resonant term $\Pi_{Ker(H^{(0)})}  R_N^{(N+1)}$ is supported on $\mathcal{N}_{N+1}^*$.}
\end{itemize}

\smallskip

The identity \eqref{identita} holds for any $N\geq 0$. Thus if we consider the set of commuting Hamiltonians $K_1,\dots, K_{N+2}$ (defined in \eqref{DPHamiltonian} and in Theorem \ref{costantiMotoDP}), which are in the Birkhoff normal form after $N$ steps
\[
K_{m, N}=K_m^{(0)}+W_{m, N}+Q_{m, N} \qquad m=1, \dots, N+2,
\]
we have that $R^{(N+1)}_N\in\mathscr{P}^{(N+1)}$ satisfies
\begin{equation}
\{ H^{(0)}, Q^{(N+1)}_{m, N}\}=\{ K_m^{(0)}, R^{(N+1)}_N \}, \quad \forall m=1, \dots, N+2.
\end{equation}
Thus by \eqref{formacostanti} the above relation writes as (recall \eqref{Gdivisor})
\begin{equation}\label{base}
\Omega(\alpha)\, K_{m, N, \alpha}^{(N+1)}=\Omega_{K_m^{(0)}}(\alpha)\,R_{N, \alpha}^{(N+1)}=\Big(\sum_j \,j^{2 (m-1)}(1+j^2)^2\,\omega(j)\,\alpha_j\Big) R_{N, \alpha}^{(N+1)},
% \qquad \forall \alpha\in\mathcal{I}_{N+1},\quad \forall m=1, \dots, N+2.
\end{equation}
for any $\ \alpha\in\mathcal{I}_{N+1}$ and any  $m=1, \dots, N+2$.

\noindent
If $\Omega(\alpha)\neq 0$ or $R_{N, \alpha}^{(N+1)}=0$ then the resonant term $\Pi_{Ker(H^{(0)})}  R_N^{(N+1)}=0$. \\
If $\Omega(\alpha)=0$ then $R_{N, \alpha}^{(N+1)}\neq 0$ if and only if
\begin{equation}\label{torrone}
\sum_j \,j^{2 (m-1)}(1+j^2)^2\,\omega(j)\,\alpha_j=\sum_j \,j^{2 m-1}(1+j^2)\,(4+j^2)\,\alpha_j=0, \quad \forall \alpha \in\mathcal{I}_{N+1}, \quad \forall m=1, \dots, N+2.
\end{equation}
%
%Since $\alpha\neq 0$ then there exists at least one index $j\in\mathbb{Z}$ such that $\alpha_j\neq 0$ and since $\lvert \alpha \rvert=N+3$ there are at most $N+3$ indices $j_1, \dots, j_{N+3}\in\mathbb{Z}$ such that $\alpha_{j_i}\neq 0$ for $i=1, \dots, N+3$.\\
We have to prove that the linear $(N+3)\times(N+3)$-dimensional system \eqref{torrone} has no solutions, except for $\alpha\in \mathcal{N}_{N+1}^*$.\\
%Consider the case for which there are $N+3$ different indices $j_i$ such that $\alpha_{j_i}\neq 0$, namely $\alpha_{j_i}=1$ for $i=1, \dots, N+3$.
Finding a solution of \eqref{torrone} is equivalent to prove that there are integers $j_1, \dots, j_{N+3}\in\mathbb{Z}\setminus\{0\}$ such that the following matrix $M$ has a non trivial kernel
\begin{equation}\label{torrone2}
 M:=\mathrm{diag}_{i=1, \dots, N+3} \Big( (1+j_i^2)\,(4+j_i^2) \Big) \,V, \quad V : =\begin{bmatrix}
j_1 & \dots & j_N\\
j_1^3 & \dots & j_N^3\\
\vdots & \dots & \vdots\\
j_1^{2 (N+3)+1}& \dots & j_N^{2(N+3)+1} 
\end{bmatrix}
.
\end{equation}
Hence our goal is to prove that $\det M=0$ if and only if $(j_1, \dots, j_{N+3})=(i, -i, j, -j, \dots)$ or its permutations. Clearly $\det M=0$ if and only if $\det V=0$.
Note that
\[
\det V=\Big(\prod_{i=1}^{N+3} j_i\Big)\,\,\,\det \begin{bmatrix}
1 & \dots & 1\\
j_1^2 & \dots & j_{N+3}^2\\
\vdots & \dots & \vdots\\
j_1^{2 N+6}& \dots & j_{N+3}^{2N+6} 
\end{bmatrix}
\]
and by renaming $x_i=j_i^2$ and by using the well known formula for the determinant of the Vandermonde matrix we have that
\[
\det V =\sqrt{\prod_{i=1}^{N+3} x_i}\,\,\,\prod_{i< j} (x_i-x_j).
\]
So it is clear that $\det V=0$ if and only if
$$\Big((j_1+j_2)\dots (j_{N+2}+j_{N+3})\Big)\Big( (j_1-j_2)\dots (j_{N+2}-j_{N+3}) \Big)=0.$$
By the fact that the indices $j_i$ are all distincts we deduce the claim.

%
%
%The case for which, for instance, $j_1=j_2$ can occour if there are only $N+2$ different indices $j_i$ such that $\alpha_{j_i}\neq 0$. In such a case the system \eqref{torrone2} is $(N+2)\times (N+2)$-dimensional, the matrix $M$ is obtained by eliminating the first row and the first column. With the same reasoning one obtains that there are solutions if and only if
%$$\Big((j_3+j_4)\dots (j_{N+2}+j_{N+3})\Big)\Big( (j_3-j_4)\dots (j_{N+2}-j_{N+3}) \Big)=0$$
%since $j_1\neq 0$.

\begin{appendix}

\section{Appendix } %\label{appendA}

\subsection{Proof of local existence } \label{appendA}
%\section{Proof of Proposition \ref{esistesol2} } \label{appendA}
Here we prove Proposition \ref{esistesol2} about the local well-posedness 
of Eq. \eqref{DP}. The argument follows closely the proof of Theorem 1.1 in 
\cite{HH}, which discusses the well-posedness of the dispersionless DP equation 
\eqref{dispersionless}. We present the proof in the compact case; during the proof we point out the minor changes one has to make in order to adjust the proof to the 
noncompact case.

First observe that if $u \in H^s(\mathbb{T};\mathbb{R})$, then 
$u \del_x u + (1-\del_{xx})^{-1} \del_{xxx}u \in H^{s-1}(\mathbb{T};\mathbb{R})$. 
We handle this problem by considering the mollified version of \eqref{DP}: fix a Schwartz function $j \in \mathcal{S}(\R)$ satysfying $0 \leq \hat{j}(\xi) \leq 1$ for all $\xi \in \mathbb{R}$, and $\hat{j}(\xi)=1$ for $|\xi|\leq 1$. Then we define the periodic functions 
$j_\epsilon$, $0 < \epsilon \leq 1$ by the following formula,
\begin{align*}
j_\epsilon(x) &:=\frac{1}{2\pi} \sum_{n \in \mathbb{Z}} \hat{j}(\epsilon n) e^{inx},
\end{align*}
and we define the mollifier by 
\begin{align}
J_\epsilon f &:= j_\epsilon \ast f.
\end{align}

By direct computation one has that $\hat{j}_\epsilon(k) =\hat{j}(\epsilon k)$; 
furthermore, for $0 < \sigma \leq s$ the map $Id-J_{\epsilon}:H^s(\mathbb{T};\mathbb{R}) \to H^\sigma(\mathbb{T};\mathbb{R})$ satisfies
\begin{align} \label{opnorm}
\| Id-J_{\epsilon} \|_{ L( H^s(\mathbb{T};\mathbb{R}) , H^\sigma(\mathbb{T};\mathbb{R}) ) } &= o(\epsilon^{s-\sigma}).
\end{align}

Now recall that Eq. \eqref{DP} is obtained by its dispersionless version 
\eqref{dispersionless} by applying the boost $u \mapsto \mathtt{c}+u$. 
This means that we can derive the mollified version of \eqref{DP} 
simply by translation form the mollified dispersionless DP equation 
(see Eq. (110) in \cite{HH}; in the rest of this section we denote by $D$ 
the operator $(1-\del_{xx})^{1/2}$)

\begin{align} \label{mollDP}
u_t &= - J_\epsilon (J_\epsilon (\mathtt{c}+u) \del_x J_\epsilon (\mathtt{c}+u)) -\frac{3}{2} \del_x D^{-2} \left( (\mathtt{c}+u)^2 \right), \; 0 < \epsilon \leq 1,
\end{align}
with initial datum $u(0,x)=u_0(x) \in H^s$. 

Now introduce the map $F_\epsilon: H^s \to H^s$,
\begin{align}
F_\epsilon(u) &= - J_\epsilon (J_\epsilon (\mathtt{c}+u) \del_x J_\epsilon (\mathtt{c}+u)) -\frac{3}{2} \del_x D^{-2} \left( (\mathtt{c}+u)^2 \right) \nonumber \\
&= -J_\epsilon (\mathtt{c} J_\epsilon \del_x J_\epsilon u) - J_\epsilon (J_\epsilon u \del_x J_\epsilon u) - \frac{3}{2} \del_x D^{-2}(u^2) - 3\mathtt{c} \del_x D^{-2} (u); \label{Feps}
\end{align}
we can observe that for any $\epsilon$ the map $F_\epsilon$ is differentiable.  
Therefore \eqref{mollDP} with initial datum $u(0,\cdot)=u_0 \in H^s$, $s>3/2$, 
defines an ODE on $H^s$, which thus admits a unique solution $u_\epsilon$ with 
existence time $T_\epsilon>0$. 
We now prove Proposition \ref{esistesol2} after some intermediate lemmata. 

\begin{lem} \label{LWPlemma1}
Let $s>3/2$, then there exists $C(s)>0$ such that the existence time $T_\epsilon$ 
for the solution of \eqref{mollDP} satisfies
\begin{align} \label{timespan}
T_\epsilon &\geq \bar{T}:=\bar{T}(\|u_0\|_{H^s})=\frac{1}{2C(s)\|u_0\|_{H^s}},
\end{align}
while the solution $u_\epsilon$ satisfies
\begin{align} \label{estsolmoll}
\|u_\epsilon(t)\|_{H^s} \leq 2 \|u_0\|_{H^s}, \; |t| \leq \bar{T}.
\end{align}
\end{lem}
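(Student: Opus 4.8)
The plan is to derive an a priori $H^s$ energy estimate for the solution $u_\epsilon$ of \eqref{mollDP} whose constant is independent of $\epsilon$, and then close a Riccati-type differential inequality. Writing $D^s=(1-\del_{xx})^{s/2}$ so that $\|u\|_{H^s}^2=\|D^s u\|_{L^2}^2$, I would compute
\[
\frac{1}{2}\frac{d}{dt}\|u_\epsilon\|_{H^s}^2=\int (D^s u_\epsilon)\,(D^s F_\epsilon(u_\epsilon))\,dx
\]
and estimate the four contributions coming from the decomposition \eqref{Feps} of $F_\epsilon$. The two \emph{linear} terms $-J_\epsilon(\mathtt{c}J_\epsilon\del_x J_\epsilon u_\epsilon)$ and $-3\mathtt{c}\,\del_x D^{-2}u_\epsilon$ are Fourier multipliers with purely imaginary, odd symbol (namely $\mathtt{c}\,\hat{\jmath}(\epsilon\xi)^3\,\mathrm{i}\xi$ and $-3\mathtt{c}\,\mathrm{i}\xi/(1+\xi^2)$); since they commute with $D^s$ and are skew-adjoint on real $L^2$, their contribution to the energy identity vanishes. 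This is exactly where the translation parameter $\mathtt{c}$ drops out of the estimate, so that the resulting constant will not depend on $\mathtt{c}$.

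It then remains to bound the two quadratic terms. For the non-local term I would use that $\del_x D^{-2}$ is bounded of order $-1$, so that $\|\del_x D^{-2}(u_\epsilon^2)\|_{H^s}\le \|u_\epsilon^2\|_{H^{s-1}}\le C\|u_\epsilon\|_{H^{s-1}}^2\le C\|u_\epsilon\|_{H^s}^2$ by the algebra property of $H^{s-1}$ (valid since $s-1>1/2$), whence this term contributes at most $C\|u_\epsilon\|_{H^s}^3$. For the transport term, the mollifier is handled by Kato's trick: since $J_\epsilon$ is self-adjoint and commutes with $D^s$, setting $v:=J_\epsilon u_\epsilon$ gives
\[
\int (D^s u_\epsilon)\,D^s J_\epsilon(J_\epsilon u_\epsilon\,\del_x J_\epsilon u_\epsilon)\,dx=\int (D^s v)\,D^s(v\,\del_x v)\,dx .
\]
I would then split $D^s(v\,\del_x v)=[D^s,v]\del_x v+v\,D^s\del_x v$. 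The commutator is controlled by the Kato--Ponce inequality, $\|[D^s,v]\del_x v\|_{L^2}\le C(\|v_x\|_{L^\infty}\|v\|_{H^s}+\|v\|_{H^s}\|v_x\|_{L^\infty})\le C\|v\|_{H^s}^2$, while the leading term is integrated by parts, $\int v\,(D^s\del_x v)(D^s v)\,dx=-\tfrac12\int v_x\,(D^s v)^2\,dx$, and bounded using $\|v_x\|_{L^\infty}\le C\|v\|_{H^s}$ (here the hypothesis $s>3/2$ is essential, since $s-1>1/2$). As $\|v\|_{H^s}=\|J_\epsilon u_\epsilon\|_{H^s}\le\|u_\epsilon\|_{H^s}$, both pieces are $\le C\|u_\epsilon\|_{H^s}^3$ with $C=C(s)$ independent of $\epsilon$.

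Collecting the bounds yields $\frac{d}{dt}\|u_\epsilon\|_{H^s}\le C(s)\|u_\epsilon\|_{H^s}^2$, and integrating this Riccati inequality with $y(0)=\|u_0\|_{H^s}$ gives $\|u_\epsilon(t)\|_{H^s}\le \|u_0\|_{H^s}/(1-C(s)\|u_0\|_{H^s}|t|)$, so that $\|u_\epsilon(t)\|_{H^s}\le 2\|u_0\|_{H^s}$ as long as $|t|\le \bar T=1/(2C(s)\|u_0\|_{H^s})$, which is precisely \eqref{estsolmoll} with $\bar T$ as in \eqref{timespan}. Finally, since $F_\epsilon$ is smooth on $H^s$ the mollified problem is a genuine Banach-space ODE, so its local solution extends as long as the $H^s$ norm stays finite; the uniform bound just obtained therefore forces the maximal existence time to satisfy $T_\epsilon\ge\bar T$. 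I expect the only delicate point to be the commutator estimate for the transport term together with the bookkeeping showing that all constants are independent of $\epsilon$; this is exactly what the self-adjointness of $J_\epsilon$ and its commutation with $D^s$ guarantee, allowing the mollifiers to be moved freely at no cost.
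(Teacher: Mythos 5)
Your proposal is correct and follows essentially the same route as the paper: a $D^s$ energy estimate in which the linear terms drop out because they are skew-adjoint Fourier multipliers commuting with $D^s$, the Kato--Ponce commutator estimate \eqref{KPineq} to handle the quadratic transport term, and integration of the resulting Riccati-type inequality to obtain \eqref{estsolmoll} and \eqref{timespan}. In fact you supply details the paper leaves implicit (moving $J_\epsilon$ by self-adjointness, the integration by parts on the leading term, the $\|\del_x D^{-2}(u^2)\|_{H^s}\le C\|u\|_{H^s}^2$ bound, and the ODE continuation argument giving $T_\epsilon\ge\bar T$), all of which are accurate.
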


\begin{proof}
First apply the operator $D^{s}$ to both sides of \eqref{mollDP}, 
and multiply both sides by $D^{s} u_\epsilon$. 
By integration and \eqref{Feps} we have
\begin{align}
\frac{1}{2} \frac{d}{dt} \|u_\epsilon\|^2_{H^s} &= - \int_0^{2\pi}  D^s u_\epsilon \; D^s \left[ J_\epsilon ( J_\epsilon u_\epsilon \del_x J_\epsilon u_\epsilon ) + \frac{3}{2} \del_x D^{-2} \left( u^2 \right) \right] dx,
\end{align}
where we exploited the fact that the first and the last term in \eqref{Feps} 
are linear in $u$ and are given by the action of skew-adjoint Fourier 
multipliers which commute with $D^s$ (all these facts imply that the terms one 
would need to add to formula (117) in \cite{HH} vanish).  \\
In order to commute the operator $D^s$ with $J_\epsilon u_\epsilon$ we apply the 
following Kato-Ponce commutator estimate (see \cite{KatoPonce}).
\begin{lem}
Let $s>0$, then there exists $C(s)>0$ such that
\begin{align} \label{KPineq}
\|D^s(fg)-f D^s g\|_{L^2} &\leq C(s) \left( \|D^s f\|_{L^2} \|g\|_{L^\infty} + \|\del_x f\|_{L^\infty} \|D^{s-1}g\|_{L^2} \right).
\end{align}
\end{lem}
By exploiting \eqref{KPineq} and Sobolev embedding we have that there exists 
$C(s)>0$ such that
\begin{align*}
\frac{d}{dt} \|u_\epsilon\|^2_{H^s}  \leq 2 C(s) \|u_\epsilon\|^3_{H^s},
\end{align*}
which implies that 
\begin{align} \label{ineq}
\| u_\epsilon(t) \|_{H^s} &\leq \frac{ \|u_0\|_{H^s} }{ 1- C(s) \|u_0\|_{H^s} t},
\end{align}
and by setting $\bar{T}:= \frac{1}{ 2 C(s) \|u_0\|_{H^s} }$ we get the thesis.
\end{proof}

\begin{lem} \label{LWPexist}
Consider Eq. \eqref{DP} with initial datum $u_0 \in H^s$, $s>3/2$. 
Then there exists a solution $u \in C([-\bar T, \bar T]; H^s)$ with $\bar T$ 
as in \eqref{timespan} such that 
\begin{align} \label{estsol}
\|u(t)\|_{H^s} \leq 2 \|u_0\|_{H^s}, \; |t| \leq \bar{T}.
\end{align}
\end{lem}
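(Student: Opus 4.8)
The plan is to construct the solution of \eqref{DP} as the limit, as $\epsilon\to 0$, of the family of mollified solutions $u_\epsilon$ furnished by Lemma \ref{LWPlemma1}. By \eqref{estsolmoll} these are bounded in $C([-\bar T,\bar T];H^s)$ uniformly in $\epsilon$, with $\bar T$ as in \eqref{timespan}, so the content of the lemma is to show that $(u_\epsilon)_{0<\epsilon\le 1}$ converges, that the limit solves the genuine (non-mollified) equation \eqref{DP}, and that it retains the bound $\|u(t)\|_{H^s}\le 2\|u_0\|_{H^s}$. First I would show that the family is Cauchy as $\epsilon\to 0$ in the weaker space $C([-\bar T,\bar T];L^2)$.

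To obtain this, fix $0<\epsilon'<\epsilon\le 1$, set $v:=u_\epsilon-u_{\epsilon'}$, and subtract the two copies of \eqref{mollDP}. Multiplying by $v$ and integrating in $x$, I would estimate $\tfrac12\tfrac{d}{dt}\|v\|_{L^2}^2$. The quadratic transport-type nonlinearity is handled by integration by parts exactly as in the proof of Lemma \ref{LWPlemma1}: the loss of one derivative is absorbed because $s>3/2$ forces $\del_x u_\epsilon\in H^{s-1}\hookrightarrow L^\infty$, and the coefficients are controlled by the uniform bound \eqref{estsolmoll}; the term $\del_x D^{-2}$ from \eqref{Feps} is a skew-adjoint multiplier of order $-1$ and contributes harmlessly. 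The terms carrying the difference of mollifiers $J_\epsilon-J_{\epsilon'}$ are estimated through \eqref{opnorm}, and since $v(0)=0$ (both solutions share the initial datum $u_0$), a Gronwall argument on $[-\bar T,\bar T]$ gives $\sup_{|t|\le\bar T}\|v(t)\|_{L^2}=o(1)$ as $\epsilon\to 0$. Hence $u_\epsilon\to u$ in $C([-\bar T,\bar T];L^2)$.

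Interpolating this $L^2$ convergence against the uniform $H^s$ bound yields convergence in $C([-\bar T,\bar T];H^{s'})$ for every $s'<s$; choosing $3/2<s'<s$ this is strong enough to pass to the limit term by term in \eqref{mollDP}, using $J_\epsilon\to\mathrm{Id}$ strongly on $H^{s'}$, so that $u$ solves \eqref{DP}. By Banach--Alaoglu the uniform bound gives $u\in L^\infty([-\bar T,\bar T];H^s)$, and weak lower semicontinuity of the norm preserves $\|u(t)\|_{H^s}\le 2\|u_0\|_{H^s}$; reading regularity off the equation then gives $u\in C([-\bar T,\bar T];H^{s-1})$ and, combined with the uniform $H^s$ bound, weak continuity $u\in C_w([-\bar T,\bar T];H^s)$.

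The delicate point, which I expect to be the main obstacle, is upgrading this weak continuity to genuine strong continuity $u\in C([-\bar T,\bar T];H^s)$: since \eqref{DP} is quasilinear and the energy estimate loses a derivative, one cannot simply run the top-order estimate on a difference without paying for that derivative. Following \cite{HH}, the way around this is a Bona--Smith type argument: one estimates $u-u_\epsilon$ directly in $H^s$, compensating the lost derivative by a positive power of $\epsilon$ extracted from \eqref{opnorm}, so as to conclude that in fact $u_\epsilon\to u$ in $C([-\bar T,\bar T];H^s)$; the limit then inherits the strong time-continuity of the smooth approximants. Equivalently, one proves that $t\mapsto\|u(t)\|_{H^s}$ is continuous and combines this with the weak continuity already obtained. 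Both routes rely on the sharp Kato--Ponce commutator estimate \eqref{KPineq} to absorb the top-order contributions, precisely as in the derivation of \eqref{estsolmoll}.
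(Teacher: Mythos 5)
Your Steps 1--4 are correct but take a genuinely different route from the paper. You prove that the whole family $(u_\epsilon)$ is Cauchy in $C(I;L^2)$ by an energy estimate on differences (using \eqref{opnorm} for the mollifier discrepancies, the self-adjointness of $J_\epsilon$ and integration by parts for the derivative loss, and Gronwall with $v(0)=0$), then interpolate against the uniform bound \eqref{estsolmoll} to get convergence in $C(I;H^{s'})$ for $s'<s$ and pass to the limit in \eqref{mollDP}. The paper instead extracts subsequences by compactness: Alaoglu in $L^\infty(I;H^s)$, then Ascoli--Arzel\`a twice to obtain convergence in $C(I;H^{s-1})$ and $C(I;H^{s-\sigma})$, then Sobolev embedding into $C(I;C^1(\mathbb{T}))$ to identify the limit equation. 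Your route buys convergence of the full family (no subsequence extraction) and avoids compact embeddings altogether, so it applies verbatim on $\mathbb{R}$, whereas the paper needs a separate remark (a Schwartz weight plus Rellich) for the noncompact case.

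The genuine gap is in your primary mechanism for the crucial last step, the upgrade to strong continuity in $H^s$. A Bona--Smith estimate on $u-u_\epsilon$ in $H^s$ cannot work with the approximants you have: $u_\epsilon$ solves \eqref{mollDP} with the \emph{same unsmoothed} datum $u_0\in H^s$, so $u_\epsilon(t)\in u_0+H^{s+1}$ has exactly the regularity of $u_0$ and $\|u_\epsilon(t)\|_{H^{s+1}}$ is infinite in general. In the $H^s$ energy identity for $v=u-u_\epsilon$ one unavoidably meets terms in which $(\mathrm{Id}-J_\epsilon)$ acts on functions that lie only in $H^{s-1}$, such as $(\mathtt{c}+u)\del_x u$, paired against $D^s v$; the bound \eqref{opnorm} produces a rate $o(\epsilon^{s-\sigma})$ only in norms strictly \emph{below} the regularity of its argument, so it cannot manufacture the missing derivative, and $v$ itself has no derivative to spare for an integration by parts. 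Genuine Bona--Smith compensation requires mollifying the \emph{initial data} (solutions $u^\epsilon$ with datum $J_\epsilon u_0$, for which $\|u^\epsilon\|_{H^{s+1}}\lesssim \epsilon^{-1}$ is meaningful); the paper does exactly this, but later and for a different purpose, in the proof of continuous dependence (Lemma \ref{LWPcontdep}). For the existence lemma the paper instead implements the alternative you mention only in one sentence: it proves that $F(t)=\|u(t)\|^2_{H^s}$ is continuous by writing it as the pointwise limit of $F_\epsilon(t)=\|J_\epsilon u(t)\|^2_{H^s}$ --- the mollifier applied to the \emph{limit} solution, not to the approximants --- and showing that the $F_\epsilon$ are Lipschitz uniformly in $\epsilon$. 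That step needs, besides Kato--Ponce \eqref{KPineq}, the commutator estimate $\|[f,J_\epsilon]\del_x g\|_{L^2}\le C\|f\|_{C^1(\mathbb{T})}\|g\|_{L^2(\mathbb{T})}$ quoted from Taylor, which \eqref{KPineq} alone does not provide; combined with the weak continuity you already established, it yields $u\in C(I;H^s)$. So your outline closes only if the $u-u_\epsilon$ estimate is replaced by one of these two mechanisms.
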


\begin{proof}
To simplify the notation we set $I:=[-\bar T, \bar T]$. The proof is divided in several steps, whose purpose is to obtain the convergence of the family 
$(u_\epsilon)_{0 < \epsilon \leq 1}$ by extracting subsequences $(u_{\epsilon_\nu})_\nu$; 
after each such extraction, we assume that the resuling sequence is relabeled 
as $(u_\epsilon)_\epsilon$. 

\vspace{0.8em}

{\bf Step 1: weak$^\star$ convergence in $L^\infty(I;H^s)$.}
The family $(u_\epsilon)_{0 < \epsilon \leq 1}$ is bounded in the space 
$C(I;H^s) \subset L^\infty(I;H^s)$. Since $L^\infty(I;H^s)$ is the dual of the 
space $L^1(I;H^s)$, Alaoglu's Theorem implies that $(u_\epsilon)_\epsilon$ is 
precompact with respect to the weak$^\star$ topology. Hence there exists a 
subsequence $(u_{\epsilon_\nu})_\nu$ which converges to $u \in L^\infty(I;H^s)$ 
weakly$^\star$, and such that $u$ satisfies \eqref{estsol}. \\
{\bf Step 2: convergence in $C(I;H^{s-1})$.} In order to show the strong convergence in $C(I;H^{s-1})$, we show that $(u_\epsilon)_\epsilon$ satisfies the hypotheses of Ascoli-Arzel\'a Theorem. Indeed, $(u_\epsilon)_\epsilon$ is equicontinuous, since for any $t_1, t_2 \in I$
\begin{align*}
\|u_\epsilon(t_1) - u_\epsilon(t_2)\|_{H^{s-1}} &\leq \sup_{t \in I} \| \del_t u_\epsilon \|_{H^{s-1}} \, |t_1-t_2| \nonumber \\
&\stackrel{ \eqref{mollDP},\eqref{Feps}, \eqref{KPineq} }{\leq} 10 C(s) (\|u_0\|_{H^s}+\|u_0\|^2_{H^s}) |t_1-t_2|.
\end{align*}
Setting $U(t):=(u_\epsilon(t))_\epsilon$, we see that for any $t \in I$ the set 
$U(t) \subset H^s$ is bounded. On the other hand, since $\mathbb{T}$ is a 
compact manifold, we have that the inclusion $i:H^s \to H^{s-1}$ is compact. 
Therefore $U(t)$ is precompact in $H^{s-1}$. \\
{\bf Step 3: convergence in $C(I;H^{s-\sigma})$, $\sigma \in (0,1)$.} For each 
$\sigma \in (0,1)$ we have 
\begin{align*}
\|u_\epsilon\|_{ C^\sigma(I;H^{s-\sigma}) } &= \sup_{t \in I} \|u_\epsilon\|_{ H^{s-\sigma} } + \sup_{t \neq t'} \frac{ \|u_\epsilon(t)-u_\epsilon(t')\|_{H^{s-\sigma}}  }{ |t-t'|^\sigma }.
\end{align*}
Now, the first term in the right-hand side of the above inequality is bounded, 
since 
\begin{align*}
\sup_{t \in I} \|u_\epsilon(t)\|_{H^{s-\sigma}} &\stackrel{ \eqref{estsolmoll} }{\leq} 2 \|u_0\|_{H^s},
\end{align*}
while the second term can be bounded by exploiting \eqref{Feps} and \eqref{KPineq}. Putting these bounds together allows us to apply Ascoli-Arzel\'a Theorem, 
since the equicontinuity condition follows form
\begin{align*}
\|u_\epsilon(t_1)-u_\epsilon(t_2)\|_{H^{s-\sigma}} &\leq \|u_\epsilon\|_{C^\sigma(I;H^{s-\sigma})} |t_1-t_2|^\sigma,
\end{align*}
while the precompactness condition can be verified as in the previous step.\\
{\bf Step 4: convergence in $C( I;C^1(\mathbb{T}) )$.}
Now fix $\sigma \in (0,1)$ such that $s-\sigma>3/2$, then by Sobolev embedding 
implies that $u_\epsilon \to u$ in $C(I;C^1(\mathbb{T}))$. Now we need to study 
$\del_t u_\epsilon$. Starting with the two non-local terms of \eqref{Feps}, the 
continuity of the operator $\del_x D^{-2}$ implies that 
$\del_x D^{-2}(u_\epsilon^2) \to \del_x D^{-2} (u^2)$ and 
$\del_x D^{-2} u_\epsilon \to \del_x D^{-2} u$ in $C(I;C(\mathbb{T}))$. 
To handle the first two terms, first observe that 
\begin{align} \label{estBurgterm}
\|J_\epsilon u_\epsilon - u \|_{ C(I;C(\mathbb{T})) } &\leq \|J_\epsilon u_\epsilon - u_\epsilon \|_{ C(I;C(\mathbb{T})) } + \| u_\epsilon - u \|_{ C(I;C(\mathbb{T})) }. 
\end{align}
To estimate the first term in the right-hand side of \eqref{estBurgterm}, 
choose $r \in (1/2,s)$, and observe that for any $t \in I$ \eqref{opnorm} 
implies that there exists $C(r) >0$ such that
\begin{align*}
\|J_\epsilon u_\epsilon - u_\epsilon \|_{ C(I;C(\mathbb{T})) } &\leq 2 C(r) \|Id-J_\epsilon\|_{L(H^s ; H^r)} \|u_0\|_{H^s} = o(\epsilon^{s-r}),
\end{align*}
from which we can deduce that $J_\epsilon u_\epsilon\to u$ in $C(I;C(\mathbb{T}))$. 
With a similar argument we can show that $J_\epsilon \del_x u_\epsilon \to \del_x u$ in $C(I;C(\mathbb{T}))$. Therefore one can conclude that
\begin{align*}
\del_t u_\epsilon &\to -(\mathtt{c}+u)\del_x(\mathtt{c}+u) -\frac{3}{2}\del_x D^{-2}( (\mathtt{c}+u)^2 )
\end{align*}
in $C(I;C(\mathbb{T}))$. 
Recalling that also $u_\epsilon \to u$ in $C(I;C^1(\mathbb{T}))$, we can deduce 
that $t \mapsto u(t)$ is a differentiable map such that
\begin{align*}
\del_t u &= -(\mathtt{c}+u)\del_x(\mathtt{c}+u) -\frac{3}{2}\del_x D^{-2}( (\mathtt{c}+u)^2 ).
\end{align*}
{\bf Step 5: convergence in $C(I;H^s)$.}
Fix $t \in I$ and take a sequence $(t_n)_{n \in \mathbb{N}} \to t$. 
Since $u \in L^\infty(I;H^s)$, we have that $t \mapsto u(t)$ is continuous with 
respect to the weak topology on $H^s$; thus, to verify the continuity we just 
need to check that the map $t \mapsto \|u(t)\|^2_{H^s}$ is continuous. We begin 
by introducing 
\begin{align*}
F(t) &:= \|u(t)\|^2_{H^s}, \\
F_\epsilon(t) &:= \|J_\epsilon u(t)\|^2_{H^s}.
\end{align*}
Now, \eqref{opnorm} implies that $F_\epsilon \to F$ pointwise as $\epsilon \to 0$. Therefore it suffices to show that each $F_\epsilon$ is Lipschtiz and that the Lipschitz constants for this family are bounded. Since
\begin{align} \label{HsFeps}
\frac{1}{2} F'_\epsilon(t) &= - \int_0^{2\pi}  D^s J_\epsilon u \; D^s J_\epsilon \left[(\mathtt{c}+u)\del_x(\mathtt{c}+u)\right] \, dx -\frac{3}{2} \int_0^{2\pi} D^s J_\epsilon u D^sJ_\epsilon \del_x D^{-2}( (\mathtt{c}+u)^2 ) \, dx.
\end{align}
To bound the first term on the right-hand side of \eqref{HsFeps} we need to use 
the commutator estimate \eqref{KPineq}, in order to commute the operator $D^s$ 
with $u$; but since we also need to commute $J_\epsilon$ with $u$, we exploit the 
following result (see \cite{Taylor}):

\begin{lem}
Let $f,g \in H^s$, then there exists $C>0$ such that 
\begin{align*}
\| [f,J_\epsilon] \del_x g\|_{L^2} &\leq C \|f\|_{C^1(\mathbb{T})} \|g\|_{L^2(\mathbb{T})}.
\end{align*}
\end{lem}

By applying the Cauchy-Schwarz inequality, the algebra property, and estimate 
\eqref{estnormsol} on the size of the solution, we can conclude that 
there exists $C(s)>0$ such that
\begin{align*}
|F'_\epsilon(t)| &= \left| \frac{d}{dt} \|J_\epsilon u(t)\|^2_{H^s} \right| \leq C(s).
\end{align*}

\end{proof}

\begin{remark}
To adjust the proof for the noncompact case, we have to define the mollifiers $J_\epsilon$ in the following way: first we fix $j \in \mathcal{S}(\mathbb{R})$ such that $\hat{j}(\xi)=1$ for $|\xi|\leq 1$. Then we set $j_\epsilon(x):= \epsilon^{-1} j(x/\epsilon)$; this gives again that $\|Id-J_\epsilon\|_{L(H^s,H^r)} =o(\epsilon^{s-r})$. \\
Moreover, we exploited the compactness of $\mathbb{T}$ in order to satisfy the hypotheses of Ascoli's theorem; in the noncompact case the embedding $H^s \to H^{s'}$ for $s>s'$ does not define a compact operator. We handle this problem by first fixing $\phi \in \mathcal{S}(\mathbb{R})$ with $0 < \phi(x) \leq 1$. Then Rellich's Theorem implies that the operator $u_\epsilon \mapsto \phi u_\epsilon$ is compact from $H^s$ to $H^{s'}$. Using this modification and by recalling that $\phi \neq 0$, we obtain again the existence of a solution $u$.
\end{remark}

\begin{lem} \label{LWPunique}
Consider Eq. \eqref{DP} with initial datum $u_0 \in H^s$, $s>3/2$. 
Then its solution $u \in C([-\bar T, \bar T]; H^s)$ with $\bar T$ 
as in \eqref{timespan} is unique.
\end{lem}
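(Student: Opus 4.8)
The plan is to prove uniqueness by an $L^2$ energy estimate for the difference of two solutions, using the nonlocal reformulation of \eqref{DP} already established in the proof of Lemma \ref{LWPexist}. Recall from Step 4 there that any solution $u \in C(I;H^s)$ of \eqref{DP} satisfies the equivalent equation
\[
\del_t u = -(\mathtt{c}+u)\,\del_x(\mathtt{c}+u) - \tfrac{3}{2}\,\del_x D^{-2}\big((\mathtt{c}+u)^2\big),
\]
where $D=(1-\del_{xx})^{1/2}$ and $\del_x D^{-2}$ is a Fourier multiplier of order $-1$, hence a bounded (indeed smoothing) operator on $L^2$. Given two solutions $u,v \in C(I;H^s)$ with $u(0,\cdot)=v(0,\cdot)=u_0$, I would set $w:=u-v$, subtract the two nonlocal equations, and use the algebraic identities
\[
(\mathtt{c}+u)\del_x(\mathtt{c}+u)-(\mathtt{c}+v)\del_x(\mathtt{c}+v)=w\,\del_x u+(\mathtt{c}+v)\del_x w,\qquad (\mathtt{c}+u)^2-(\mathtt{c}+v)^2=(2\mathtt{c}+u+v)\,w,
\]
to obtain the evolution equation
\[
\del_t w = -\,w\,\del_x u-(\mathtt{c}+v)\del_x w-\tfrac{3}{2}\,\del_x D^{-2}\big((2\mathtt{c}+u+v)\,w\big),\qquad w(0,\cdot)=0.
\]

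The next step is to multiply this identity by $w$ and integrate over the spatial domain. The delicate term is the transport term $(\mathtt{c}+v)\del_x w$, which a priori costs a derivative; but integrating by parts moves the derivative off $w$, since
\[
-\int (\mathtt{c}+v)\,\del_x w\;w\,dx=-\tfrac{1}{2}\int (\mathtt{c}+v)\,\del_x(w^2)\,dx=\tfrac{1}{2}\int \del_x v\;w^2\,dx,
\]
so no derivative of $w$ survives. The term $\int w^2\,\del_x u\,dx$ is bounded by $\|\del_x u\|_{L^\infty}\|w\|_{L^2}^2$, while the nonlocal term is controlled by the $L^2$-boundedness of $\del_x D^{-2}$, giving a bound $\lesssim (|\mathtt{c}|+\|u\|_{L^\infty}+\|v\|_{L^\infty})\|w\|_{L^2}^2$. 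Since $s>3/2$, the embedding $H^{s-1}\hookrightarrow L^\infty$ yields $\|\del_x u\|_{L^\infty},\|\del_x v\|_{L^\infty}\lesssim \|u\|_{H^s},\|v\|_{H^s}$, which are bounded on $I$ by \eqref{estsol}. Collecting these estimates produces the differential inequality
\[
\frac{d}{dt}\|w(t)\|_{L^2}^2\le C(\|u_0\|_{H^s})\,\|w(t)\|_{L^2}^2,\qquad t\in I,
\]
and Gronwall's inequality together with $\|w(0)\|_{L^2}=0$ forces $w\equiv 0$ on $I$, i.e. $u=v$.

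The only real obstacle — and the reason I would estimate at the $L^2$ level rather than in $H^s$ — is that a naive $H^s$ estimate for the difference generates the commutator $[D^s,u]\,\del_x w$, whose Kato--Ponce bound \eqref{KPineq} involves $\|\del_x w\|_{L^\infty}$, a quantity that is not controllable for $s$ close to $3/2$. Working in $L^2$ sidesteps all commutators: the single dangerous transport term is eliminated by the integration by parts above, which uses precisely the borderline fact $\del_x u,\del_x v\in L^\infty$ coming from $s>3/2$, and the nonlocal term is harmless because $\del_x D^{-2}$ gains a derivative. Thus the heart of the argument is simply organizing the energy identity so that the quasi-linear transport term does not cost regularity, after which Gronwall closes the estimate.
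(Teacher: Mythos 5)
Your proof is correct, and it reaches the conclusion by a route that differs from the paper's in its technical core. The paper writes the difference equation in the symmetric divergence form
\[
\del_t v \;=\; -\tfrac{1}{2}\,\del_x\big[(2\mathtt{c}+u+w)v\big] \;-\; \tfrac{3}{2}\,\del_x D^{-2}\big((2\mathtt{c}+u+w)v\big),
\]
and runs the energy estimate in $H^{\sigma}$ for a fixed $\sigma\in(1/2,\,s-1)$; the quasi-linear term is then controlled by the Calder\'on--Coifman--Meyer commutator estimate $\|[D^{\sigma}\del_x,f]v\|_{L^2}\le C\|f\|_{H^{\rho}}\|v\|_{H^{\sigma}}$ (with $\rho>3/2$, $\sigma+1\le\rho$) quoted from \cite{TayCommEst}, after which the remaining highest-order piece $\int f\,D^{\sigma}v\,D^{\sigma}\del_x v\,dx$ is removed by the same integration-by-parts trick you use. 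You instead work at the bare $L^2$ level with the non-symmetric splitting $w\,\del_x u+(\mathtt{c}+v)\del_x w$, so the only tools needed are one integration by parts, the embedding $H^{s-1}\hookrightarrow L^{\infty}$ (exactly where $s>3/2$ enters), and the $L^2$-boundedness of the order $-1$ multiplier $\del_x D^{-2}$ --- no commutator machinery at all. Both arguments rest on the same principle: estimate the difference in a topology weaker than $H^s$, where the transport term costs no derivatives, and close with Gronwall from zero initial difference; yours is the more elementary implementation, while the paper's $H^{\sigma}$ version yields as a byproduct Lipschitz dependence on the data in the intermediate norm $H^{\sigma}$, in the spirit of the estimates reused for continuous dependence in Lemma \ref{LWPcontdep}. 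Two cosmetic points: your Gronwall constant should be stated in terms of $\sup_{t\in I}\big(\|u(t)\|_{H^s}+\|v(t)\|_{H^s}\big)$, which is finite by continuity on the compact interval for \emph{any} pair of solutions being compared, whether or not they satisfy \eqref{estsol}; and the backward-time half of $I$ requires the time-reversed form of Gronwall, a point the paper glosses over as well.
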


\begin{proof}
Let $u_0 \in H^s$, and let $u$ and $w$ be two solutions to \eqref{DP} with 
$u(0,\cdot)=w(0,\cdot)=u_0$. Consider $v:=u-w$, then
\begin{align} \label{eqdiff}
\del_t v &= -\frac{1}{2} \del_x \left[ (2\mathtt{c}+u+w)v \right] -\frac{3}{2} \del_x D^{-2} ((2\mathtt{c}+u+w)v).
\end{align}
Fix $\sigma \in (1/2,s-1)$; then
\begin{align} \label{normv}
\frac{d}{dt} \|v\|^2_{H^\sigma} &= -\int_0^{2\pi} D^\sigma v \left[  D^\sigma \del_x ((2\mathtt{c}+u+w)v) + 3 \del_x D^{\sigma-2}((2\mathtt{c}+u+w)v) \right] \, dx.
\end{align}

In order to bound the first term in the right-hand side of \eqref{normv} we 
commute $D^\sigma \del_x$ with $u+w$ by exploiting the following Calderon-Coifman-Meyer estimate (see Proposition 4.2 in \cite{TayCommEst})

\begin{lem}
Let $\sigma \geq -1$, then for any $\rho>3/2$ such that $\sigma+1 \leq \rho$ 
there exists $C>0$ such that 
\begin{align*}
\| [D^\sigma \del_x,f]v \|_{L^2} &\leq C \|f\|_{H^\rho} \|v\|_{H^\sigma}.
\end{align*}
\end{lem}

The nonlocal term is bounded by Plancherel and Cauchy-Schwarz inequality. 
Hence there exists $c(s)>0$ such that
\begin{align*}
\frac{d}{dt} \|v(t)\|^2_{H^\sigma} &\leq c(s) \|v\|^2_{H^\sigma}; \\
\|v\|_{H^\sigma} &\leq e^{c(s) \, T} \|v(0)\|_{H^\sigma}=0,
\end{align*}
and we can conclude that $u=w$.
\end{proof}

\begin{lem} \label{LWPcontdep}
Consider Eq. \eqref{DP} with initial datum $u_0 \in H^s$, $s>3/2$. 
Then the solution map from $H^s \to C(I;H^s)$ 
($I=[-\bar T,\bar T]$, with $\bar T$ as in \eqref{timespan}) 
given by $u_0 \mapsto u$ is continuous.
\end{lem}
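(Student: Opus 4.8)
The plan is to prove sequential continuity: if $u_0^n \to u_0$ in $H^s$, then the corresponding solutions $u^n, u$ (which exist on a common interval $I=[-\bar T,\bar T]$ by Lemma~\ref{esistesol2}, after shrinking $\bar T$ to match $M:=\sup_n \|u_0^n\|_{H^s}<\infty$) satisfy $u^n \to u$ in $C(I;H^s)$. First I would record the uniform bound $\|u^n\|_{C(I;H^s)}\le 2M$ from \eqref{estnormsol}. Next, repeating the Gronwall argument of Lemma~\ref{LWPunique} for the difference $v^n:=u^n-u$ (which now carries the nonzero initial datum $u_0^n-u_0$) yields, for $\sigma=s-1>1/2$,
\[
\|u^n-u\|_{C(I;H^{s-1})}\le e^{c(s)\bar T}\,\|u_0^n-u_0\|_{H^{s-1}}\xrightarrow[n\to\infty]{}0 .
\]
So convergence holds in the lower norm $C(I;H^{s-1})$, and the whole difficulty is to upgrade this to the top norm $H^s$, where the difference equation loses one derivative and a direct energy estimate does not close.

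To upgrade I would use the Bona--Smith mollification scheme, which fits the mollifier $J_\delta$ and the estimate \eqref{opnorm} already introduced. For each $\delta\in(0,1]$ let $u^n_\delta$ and $u_\delta$ be the solutions with regularized data $J_\delta u_0^n$ and $J_\delta u_0$; since these data lie in $H^{s+1}$, the solutions are one degree smoother. Splitting
\[
\|u^n-u\|_{H^s}\le \|u^n-u^n_\delta\|_{H^s}+\|u^n_\delta-u_\delta\|_{H^s}+\|u_\delta-u\|_{H^s},
\]
I would control the two outer \emph{self-approximation} terms uniformly in $n$, and the middle term for fixed $\delta$ as $n\to\infty$.

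For the outer terms, set $v:=u^n-u^n_\delta$. A commutator energy estimate in $H^s$ (using the Kato--Ponce bound \eqref{KPineq} exactly as in the proof of Lemma~\ref{LWPlemma1}) produces, after isolating the top-order contributions, an inequality of the schematic form
\[
\frac{d}{dt}\|v\|_{H^s}^2\le C\,\|v\|_{H^s}^2+C\,\|v\|_{H^{s-1}}\,\|v\|_{H^s}\,\|u^n_\delta\|_{H^{s+1}},
\]
where the crucial cross term carries the \emph{low} norm of the difference against the \emph{high} norm of the smooth solution. Now \eqref{opnorm} gives $\|v(0)\|_{H^s}=\|(\mathrm{Id}-J_\delta)u_0^n\|_{H^s}=o(1)$ and $\|v(0)\|_{H^{s-1}}=o(\delta)$, the latter being propagated for all $t\in I$ by the lower-norm estimate of the first paragraph applied to $v$, while the regularity gain of the mollifier yields $\|u^n_\delta\|_{H^{s+1}}\le C\delta^{-1}M$. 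Hence the product $\|v\|_{H^{s-1}}\,\|u^n_\delta\|_{H^{s+1}}=o(\delta)\cdot O(\delta^{-1})=o(1)$, and Gronwall gives $\|u^n-u^n_\delta\|_{C(I;H^s)}\le \omega(\delta)$ with $\omega(\delta)\to0$; the same bound holds for $\|u-u_\delta\|_{C(I;H^s)}$. Uniformity in $n$ is guaranteed because $\{u_0^n\}\cup\{u_0\}$ is compact in $H^s$, so the mollifier tails $\|(\mathrm{Id}-J_\delta)u_0^n\|_{H^s}$ are equismall.

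Finally, for fixed $\delta$ the middle term involves two solutions with data $J_\delta u_0^n, J_\delta u_0\in H^{s+1}$; since both are one degree smoother than required, the $H^s$ energy estimate for their difference closes without derivative loss, and Gronwall gives $\|u^n_\delta-u_\delta\|_{C(I;H^s)}\le C(\delta)\,\|u_0^n-u_0\|_{H^s}\to0$ as $n\to\infty$ (the constant $C(\delta)$ is finite since it depends only on $\sup_t\|u^n_\delta\|_{H^{s+1}},\|u_\delta\|_{H^{s+1}}\le C\delta^{-1}M$). Combining with an $\varepsilon/3$ argument — first fix $\delta$ making both outer terms $<\varepsilon/3$ uniformly in $n$, then take $n$ large so the middle term is $<\varepsilon/3$ — proves $u^n\to u$ in $C(I;H^s)$, hence the continuity of the solution map. \textbf{The main obstacle} is the refined $H^s$ self-approximation estimate of the third paragraph: one must organize the energy identity so that the uncontrollable top-order contribution appears only in the combination $o(\delta)\cdot O(\delta^{-1})$, relying on the sharp little-$o$ decay recorded in \eqref{opnorm} rather than a crude $O(\delta)$ bound.
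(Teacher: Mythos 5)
Your proposal is correct and follows essentially the same route as the paper's proof: there too one mollifies both data (solutions $u^\epsilon$, $u^\epsilon_n$ with data $J_\epsilon u_0$, $J_\epsilon u_{0,n}$), splits $\|u-u_n\|_{C(I;H^s)}$ into the same three terms, closes the two self-approximation terms by precisely your cross-term mechanism $\bigl(1+\|u^\epsilon_{(n)}\|_{H^{s+1}}\bigr)\|v_{(n)}\|_{\mathrm{low}}\|v_{(n)}\|_{H^s}=o(\epsilon)\cdot O(\epsilon^{-1})$ resting on the little-$o$ in \eqref{opnorm}, and treats the middle term with a Gronwall factor $e^{c_s T/\epsilon}$, fixing $\epsilon$ first and then $N$. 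The only cosmetic differences are that the paper runs the self-approximation estimate as a Riccati-type inequality $y'\le c_2(s)(y^2+y+\delta(\epsilon))$ solved via its roots rather than your linearized Gronwall bound, and gets uniformity in $n$ from the triangle inequality $\|(\mathrm{Id}-J_\epsilon)u_{0,n}\|_{H^s}\le 2\|u_{0,n}-u_0\|_{H^s}+\|(\mathrm{Id}-J_\epsilon)u_0\|_{H^s}$ instead of your compactness remark.
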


\begin{proof}
Fix $u_0 \in H^s$, and let $(u_{0,n})_n \subset H^s$ be a sequence such that 
$\lim_{n \to \infty} u_{0,n} = u_0$. Then, if $u_n$ is the solution of 
Eq. \eqref{DP} with initial datum $u_{0,n}$, we want to show that
\begin{align} \label{contdepconv}
\lim_{n \to \infty} u_n &= u \; \; \text{in} \; \; C(I;H^s);
\end{align}
equivalently, let $\eta>0$, we want to show hat there exists $N>0$ such that 
\begin{align} \label{contdepconv2}
\|u-u_n\|_{C(I;H^s)} &< \eta, \; \; \forall n > N.
\end{align}
As before, we will ue the convolution operator to smooth out the initial data. 
Let $0 < \epsilon \leq 1$, let $u^\epsilon$ be the solution to \eqref{DP} 
with initial datum $J_\epsilon u_0 = j_\epsilon \ast u_0$ and let $u^\epsilon_n$ 
be the solution of \eqref{DP} with initial datum $J_\epsilon u_{0,n}$. Then
\begin{align} \label{distsol}
\|u-u_n\|_{C(I;H^s)} &\leq \|u-u^\epsilon\|_{C(I;H^s)} + \|u^\epsilon-u_n^\epsilon\|_{C(I;H^s)} + \|u^\epsilon-u_n\|_{C(I;H^s)}.
\end{align}
We will prove that each of these terms can be bounded by $\eta/3$, 
for suitable choices of $\epsilon$ and $N$. We also point out that the quantity 
$\epsilon$ will be independent of $N$ and will only depend on $\eta$, while 
the choice of $N$ will depend on both $\eta$ and $\epsilon$. 

\vspace{0.8em}

We start with $\|u^\epsilon-u_n^\epsilon\|_{C(I;H^s)}$. 
Set $v:=u^\epsilon-u_n^\epsilon$, then $v$ satisfies
\begin{align*}
\del_t v &= -\frac{1}{2} \del_x \left[ (2\mathtt{c}+u^\epsilon+u_n^\epsilon)v \right] -\frac{3}{2} \del_x D^{-2} ((2\mathtt{c}+u^\epsilon+u_n^\epsilon)v), \nonumber \\
v(0) &= u^\epsilon(0)-u_n^\epsilon(0) = J_\epsilon u_0 -J_\epsilon u_{0,n},
\end{align*}
and 
\begin{align} \label{Hsnormv}
\frac{1}{2} \frac{d}{dt} \|v\|^2_{H^s} &= - \int_0^{2\pi} D^s v \, D^s \left[ \frac{1}{2} \del_x \left[ (2\mathtt{c}+u^\epsilon+u_n^\epsilon)v \right] +\frac{3}{2} \del_x D^{-2} ((2\mathtt{c}+u^\epsilon+u_n^\epsilon)v) \right] dx.
\end{align}
Applying \eqref{KPineq} and the estimate $\|u^\epsilon\|_{H^{s+1}} \leq C/\epsilon$, \eqref{Hsnormv} implies that there exists $c_s>0$ such that
\begin{align} \label{ineqdiff}
\frac{1}{2} \frac{d}{dt} \|v(t)\|^2_{H^s} &\leq \frac{c_s}{\epsilon} \|v(t)\|^2_{H^s},
\end{align}
which in turn leads to
\begin{align} \label{ineqdiff2}
\|v(t)\|_{H^s} &\leq e^{c_s T/\epsilon} \|v(0)\|_{H^s} \; \leq \; 2 e^{c_sT/\epsilon} \|u_0-u_{0,n}\|_{H^s}.
\end{align}
Notice that \eqref{ineqdiff2} does not imply any constraint on $\epsilon$; 
however, handling the first and the third term in the right-hand side of 
\eqref{distsol} will require $\epsilon$ to be small. 
After chosing $\epsilon$, we will take $N$ so large that 
$\|u_0-u_{0,n}\|_{H^s} < \frac{\eta}{6} e^{-c_sT/\epsilon}$, which will imply that 
$\|u^\epsilon-u_n^\epsilon\|_{C(I;H^s)} < \eta/3$.

\vspace{0.8em}

Now we estimate $\|u^\epsilon-u\|_{C(I;H^s)}$ and $\|u^\epsilon-u_n\|_{C(I;H^s)}$. 
We set $v:=u^\epsilon-u$ and $v_n:=u_n^\epsilon-u_n$. Since $v$ and $v_n$ will 
satisfy the same energy estimates, we will write $v_{(n)}$ to mean that an 
equation holds both with and without the subscript. 
We observe that $v_{(n)}$ solves the Cauchy problem
\begin{align*}
\del_t v_{(n)} &= -\frac{1}{2} \del_x \left[ (2\mathtt{c}+u^\epsilon+u_{(n)})v_{(n)} \right] -\frac{3}{2} \del_x D^{-2} ((2\mathtt{c}+u^\epsilon+u_{(n)})v_{(n)} ) \\
&= -\frac{1}{2} \del_x \left[ (2\mathtt{c}+2 u^\epsilon+v_{(n)})v_{(n)} \right] -\frac{3}{2} \del_x D^{-2} ((2\mathtt{c}+2 u^\epsilon+v_{(n)})v_{(n)} ), \nonumber \\
v(0) &= j_\epsilon \ast u_{0,(n)}-u_{0,(n)}.
\end{align*}
By exploiting \eqref{KPineq}, the Cauchy-Schwarz inequality and Sobolev embedding we get
\begin{align} \label{Hsnormvn}
\frac{1}{2} \frac{d}{dt} \|v_{(n)}(t)\|_{H^s} &\leq c'_s \left[ \|v_{(n)}\|^3_{H^s} + (1+\|u^\epsilon_{(n)}\|_{H^s}) \|v_{(n)}\|^2_{H^s} + (1+\|u^\epsilon_{(n)}\|_{H^{s+1}}) \|v_{(n)}\|_{H^{s-1}} \|v_{(n)}\|_{H^s} \right]
\end{align}
for some $c'_s>0$. 
Since $\|u^\epsilon_{(n)}(t)\|_{H^{s+1}} \leq c_1(s)/\epsilon$ and that 
$\|v_{(n)}(t)\|_{L^2}=o(\epsilon)$, \eqref{Hsnormvn} gives
\begin{align} \label{diffineq3}
\frac{d y}{d t} &\leq c_2(s) \left( y^2+y+\delta \right),
\end{align}
where $\delta=\delta(\epsilon) \to 0$ as $\epsilon \to 0$. 

The quadratic expression $y^2+y+\delta$ has roots
\begin{align} \label{roots}
r_{-1} = \frac{-1-\sqrt{1-4\delta}}{2}, \; &\; r_{0} = \frac{-1+\sqrt{1-4\delta}}{2}.
\end{align}
Restricting $\epsilon$ so that the roots given in \eqref{roots} are real-valued, we observe that $r_0$ and $r_{-1}$ are negative and, as $\delta \to 0$, we have 
$r_{-1} \to -1$ and $r_0 \to 0$. Setting $R := \sqrt{1 - 4\delta}$ and taking into account the constant $c_s$, we solve \eqref{diffineq3} via
\begin{align} \label{diffineqsol}
\frac{y(t)-r_0}{y(t)-r_{-1}} &\leq \gamma, \\
\gamma &:= e^{c_s RT} \frac{y(0)-r_0}{y(0)-r_{-1}}.
\end{align}
From here, we will treat the cases $y = \|v\|_{H^s}$ and $y = \|v_n\|_{H^s}$ separately. 

\vspace{0.8em}

\emph{Case $y = \|v\|_{H^s}$.} Using \eqref{opnorm} we have $y(0) \to 0$ as 
$\epsilon \to 0$. This implies that $\gamma \to 0$ as $\epsilon \to 0$. 
From \eqref{diffineqsol}, we then obtain $y(t) \leq y(t)-r_0 \leq \gamma [y(t)-r_{-1}]$. Solving for $y(t)$ gives us
\begin{align*}
y(t) &\leq \frac{-r_{-1}}{1-\gamma} \gamma \\
&\stackrel{\gamma \to 0}{ \to } 0.
\end{align*}
Therefore, for sufficiently small $\epsilon$ we can bound the first term of 
\eqref{distsol} by $\eta/3$. 

\vspace{0.8em}

\emph{Case $y = \|v_n\|_{H^s}$.} We begin by bounding $y(0)$ by
\begin{align*}
\|j_\epsilon \ast u_{0,n}-u_{0,n} \|_{H^s} &\leq 2 \|u_{0,n}-u_0\|_{H^s} + \|j_\epsilon \ast u_0-u_0\|_{H^s},
\end{align*}
which implies that
\begin{align}
\gamma &\leq \frac{e^{c_S RT}}{-r_{-1}} \left( 2 \|u_{0,n}-u_0\|_{H^s}+\|j_\epsilon \ast u_0-u_0\|_{H^s} \right) + \frac{r_0 e^{c_s RT}}{r_{-1}},
\end{align}
where we may independently choose $\epsilon$ sufficiently 
small and $N$ sufficiently large so that $\gamma < 1/2$. 
Then, arguing as in the previous case we obtain $y(t) \leq 2\gamma$. 
We may now further refine the choice of $\epsilon$ and $N$ so that 
$y(t)<\eta/3$, completing this case. Collecting our results completes the proof.
\end{proof}

\begin{remark}
The proofs for uniqueness of the solution and for the continuous dependence on the initial datum do not rely on compactness properties, hence they do not nedd any adjustment in the noncompact case.
\end{remark}

\subsection{Analyticity on Sobolev spaces}

We recall some facts about analytic functions on Banach spaces following Appendix A of \cite{PoTr}.

\begin{defi}{(\bf{Weakly analyticity})}
Let $E, F$ two complex Banach spaces and $U$ an open subset of $E$. A map $f\colon U\to F$ is said weakly analytic if for each $w\in U$, $h\in E$ and $L\in F^*$ the function
\[
z\mapsto L f(w+z h)
\]
is analytic in some neighborhood of the origin in $\mathbb{C}$ in the usual sense of one complex variable.
\end{defi}
\begin{teor}\label{TruboTeo}
Let $f\colon U\to F$ be a map from an open subset $U$ of a complex Banach space $E$ into a complex Banach space $F$. Then the following three statements are equivalent.
\begin{enumerate}
\item $f$ is analytic in $U$.
\item $f$ is locally bounded and weakly analytic in $U$.
\item $f$ is infinitely often differentiable on $U$, and is represented by its Taylor series in a neighborhood of each point in $U$.
\end{enumerate}
\end{teor}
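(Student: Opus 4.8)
The plan is to prove the three implications in a cycle, isolating the one nontrivial step. The implications $(1)\Rightarrow(3)$ and $(3)\Rightarrow(1)$ are essentially tautological once one fixes the definition of analyticity as local representability by a norm-convergent power series $f(w_0+h)=\sum_{m\ge0}P_m(h)$, with each $P_m$ a bounded homogeneous polynomial of degree $m$: such a series can be differentiated term by term to any order and coincides with its own Taylor series, while conversely a map represented by its Taylor series near each point is by definition analytic. The implication $(1)\Rightarrow(2)$ is also easy: a locally convergent power series is continuous, hence locally bounded, and for fixed $w,h$ the restriction $z\mapsto f(w+zh)$ is an ordinary convergent power series in the single variable $z$, so $z\mapsto Lf(w+zh)$ is analytic for every $L\in F^{*}$. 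Thus the whole content is the implication $(2)\Rightarrow(1)$: \emph{local boundedness together with weak analyticity forces genuine Fr\'echet analyticity}. I would carry this out in three steps.

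First I would upgrade local boundedness to continuity. Fix $w_0\in U$ and choose $r>0$ with $\overline{B_E(w_0,2r)}\subset U$ and $\lVert f\rVert\le M$ on $B_E(w_0,2r)$. For $w\in B_E(w_0,r)$ and a small increment $k\in E$ with $\lVert k\rVert<r$, set $\rho:=r/\lVert k\rVert>1$ and $\phi(z):=f(w+zk)$, which maps $\{|z|\le\rho\}$ into $B_E(w_0,2r)$. For each $L\in F^{*}$ the scalar map $z\mapsto L\phi(z)$ is analytic and bounded by $M\lVert L\rVert$ on $\{|z|\le\rho\}$, so the Cauchy estimates give $|c_m(L)|\le M\lVert L\rVert\,\rho^{-m}$ for its Taylor coefficients, whence
\[
|Lf(w+k)-Lf(w)|=\Big|\sum_{m\ge1}c_m(L)\Big|\le M\lVert L\rVert\sum_{m\ge1}\rho^{-m}=M\lVert L\rVert\,\frac{\lVert k\rVert}{r-\lVert k\rVert}.
\]
Taking the supremum over $\lVert L\rVert\le1$ yields $\lVert f(w+k)-f(w)\rVert\le M\lVert k\rVert/(r-\lVert k\rVert)\to0$, so $f$ is continuous.

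With continuity in hand, $z\mapsto f(w_0+zh)$ is a continuous $F$-valued map for $\lVert h\rVert\le r$, so the vector-valued (Bochner) Cauchy integrals
\[
a_m(h):=\frac{1}{2\pi i}\int_{|\zeta|=1}\frac{f(w_0+\zeta h)}{\zeta^{m+1}}\,d\zeta\in F
\]
are well defined and satisfy $\lVert a_m(h)\rVert\le M$. Since any $L\in F^{*}$ commutes with the integral, $La_m(h)$ is the $m$-th Taylor coefficient of $z\mapsto Lf(w_0+zh)$; hence $Lf(w_0+zh)=\sum_m La_m(h)\,z^m$ for $|z|<1$, and as this holds for every $L$ the series $\sum_m a_m(h)\,z^m$, which is absolutely convergent because $\sum_m M|z|^m<\infty$, converges in $F$ to $f(w_0+zh)$. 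It then remains to identify each $a_m$ as a bounded homogeneous polynomial: I would check $a_m(\lambda h)=\lambda^m a_m(h)$ directly from the integral, define the associated symmetric $m$-linear form by the polarization formula, and bound it uniformly using $\lVert a_m\rVert\le M$ on the ball. Setting $z=1$ then gives the norm-convergent expansion $f(w_0+h)=\sum_m a_m(h)$ on a neighborhood of $w_0$, i.e.\ analyticity. The main obstacle is exactly this last passage from the \emph{one-dimensional} (Gateaux) analyticity encoded by weak analyticity to \emph{joint} Fr\'echet analyticity: weak analyticity only controls $f$ along each complex line, and it is the uniform local bound $M$, funneled through the Cauchy estimates, that simultaneously delivers continuity, the norm-convergence of the coefficient series, and the boundedness (hence multilinear-form representation) of the $a_m$; the polarization and uniform bounding of these forms are the delicate bookkeeping, while everything else reduces to classical one-variable Cauchy theory on complex lines. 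As this is the standard characterization of analytic maps between Banach spaces, for the complete details I would refer to Appendix~A of \cite{PoTr}.
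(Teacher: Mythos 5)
The paper itself offers no proof of this theorem: it is quoted from Appendix A of \cite{PoTr} precisely so it can be used as a black box (e.g.\ in Lemma \ref{NemistkyLemma} and Proposition \ref{LemmaStimeTaylor}), so there is no in-paper argument to compare against, and your proposal must be judged on its own. Its architecture is the standard one and matches the classical proof: everything reduces to $(2)\Rightarrow(1)$, local boundedness is upgraded to (Lipschitz) continuity via one-variable Cauchy estimates along complex lines, and the homogeneous terms are then produced by vector-valued Cauchy integrals; the continuity step is carried out correctly. (One remark on $(1)\Leftrightarrow(3)$: the paper never defines ``analytic'', so reading it as ``locally representable by a norm-convergent power series'' is defensible, though under the definition used in \cite{PoTr} --- continuous complex Fr\'echet differentiability --- that equivalence carries some content too.)

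Two steps, however, need repair. First, weak analyticity as defined in the paper gives analyticity of $z\mapsto Lf(w+zk)$ only \emph{in some neighborhood of the origin}, whereas your Cauchy estimates are taken on the disk $\{|z|\le\rho\}$ with $\rho>1$; you need the (standard, but missing) remark that applying weak analyticity at each point $w+z_1k$ with the same direction $k$ shows this scalar function is analytic on the whole open set $\{z: w+zk\in U\}$, which contains that disk. Second, your final step ``setting $z=1$'' does not follow from what precedes it: integrating over $|\zeta|=1$ only yields $\lVert a_m(h)\rVert\le M$, so the series $\sum_m a_m(h)z^m$ is known to converge for $|z|<1$ and may a priori diverge at $z=1$. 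The fix is cheap with tools you already have: either integrate over $|\zeta|=2$ (legitimate, since $w_0+\zeta h\in\overline{B_E(w_0,2r)}\subset U$ for $\lVert h\rVert\le r$, $|\zeta|\le 2$), which gives $\lVert a_m(h)\rVert\le M2^{-m}$ and absolute convergence at $z=1$; or use the homogeneity $a_m(\lambda h)=\lambda^m a_m(h)$ to write, for $\lVert h\rVert\le r/2$, $f(w_0+h)=\sum_m a_m(2h)2^{-m}=\sum_m a_m(h)$. Finally, be aware that homogeneity alone does not make $a_m$ a polynomial: multilinearity of the polarized form needs its own argument (say, iterated vector-valued Cauchy integrals after establishing joint analyticity of $(z_1,\dots,z_m)\mapsto Lf(w_0+\sum_i z_ih_i)$), so the ``delicate bookkeeping'' you defer to \cite{PoTr} is genuinely the remaining mathematical content --- which, to be fair, leaves your write-up resting on the same reference the paper itself cites for the whole statement.
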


\end{appendix}

\end{document}